\DeclareMathOperator{\supp}{supp}
\DeclareMathOperator{\loc}{loc\ }
\DeclareMathOperator{\divg}{div}
\DeclareMathOperator{\dist}{dist}
\newtheorem{lem}[equation]{Lemma}
\newtheorem{prop}[equation]{Proposition}
\newtheorem{thm}[equation]{Theorem}
\newtheorem{cor}[equation]{Corollary}
\newtheorem{re}[equation]{Remark}
\newtheorem{defn}[equation]{Definition}
\numberwithin{equation}{section}
\newcommand{\norm}[1]{\left\Vert#1\right\Vert}
\newcommand{\abs}[1]{\left\vert#1\right\vert}
\newcommand{\br}[1]{\left(#1\right)}
\newcommand{\set}[1]{\left\{#1\right\}}
\newcommand{\Real}{\mathbb R}
\newcommand{\Rn}{\mathbb R^n}
\newcommand{\Rpl}{\mathbb R_+^{n+1}}
\newcommand{\eps}{\varepsilon}
\newcommand{\vp}{\varphi}
\newcommand{\wt}{\widetilde}
\newcommand{\bdy}{\partial}
\newcommand{\Dt}{\partial_t}
\newcommand{\Di}{\partial_i}
\newcommand{\Dj}{\partial_j}
\newcommand{\act}[1]{\langle#1\rangle_{\widetilde{W}^{-1,2},W^{1,2}}}
\newcommand{\Lrho}{\mathscr{L}_{\rho}}
\newcommand{\bd}[1]{\mathbf{#1}}
\newcommand{\1}{\mathbbm{1}}
\begin{document}


\title[$L^p$ theory for the square roots and square functions]{$L^p$ theory for the square roots and square functions of elliptic operators having a BMO anti-symmetric part}
\author{Steve Hofmann}
\author{Linhan Li}
\author{Svitlana Mayboroda}
\thanks{S.~Hofmann acknowledges support of the National Science Foundation (currently grant number DMS-1664047). S.~Mayboroda is supported in part by the NSF grants DMS 1344235, DMS 1839077 and Simons Foundation grant 563916, SM.} 
\author{Jill Pipher}

\newcommand{\Addresses}{{
  \bigskip
  \footnotesize

  Steve Hofmann, \textsc{Department of Mathematics, University of Missouri, Columbia, MO 65211, USA}\par\nopagebreak
  \textit{E-mail address}: \texttt{hofmanns@missouri.edu}

  \medskip

  Linhan Li, \textsc{School of Mathematics, University of Minnesota, Minneapolis, MN 55455, USA}\par\nopagebreak
  \textit{E-mail address}: \texttt{linhan\_li@alumni.brown.edu}

  \medskip

  Svitlana Mayboroda, \textsc{School of Mathematics, University of Minnesota, Minneapolis, MN 55455, USA}\par\nopagebreak
  \textit{E-mail address}: \texttt{svitlana@math.umn.edu}
  
  \medskip
  
   Jill Pipher, \textsc{Department of Mathematics, Brown University, Providence, RI 02906 USA}\par\nopagebreak
  \textit{E-mail address}: \texttt{jpipher@math.brown.edu}

}}

\maketitle

\begin{abstract}
We consider the operator $L=-{\rm div}(A\nabla)$, 
where $A$ is an $n\times n$ matrix of real coefficients and satisfies the ellipticity condition, with $n\ge 2$. We assume that the coefficients of the symmetric part of $A$ are in $L^\infty(\mathbb{R}^n)$, and those of the anti-symmetric part of $A$ only belong to the space $BMO(\mathbb{R}^n)$. We create a complete narrative of the $L^p$ theory for the square root of $L$ and show that it satisfies the $L^p$ estimates $\left\Vert{\sqrt{L}f}\right\Vert_{L^p}\lesssim\left\Vert{\nabla f}\right\Vert_{L^p}$ for $1<p<\infty$, and $\left\Vert{\nabla f}\right\Vert_{L^p}\lesssim\left\Vert{\sqrt{L}f}\right\Vert_{L^p}$ for $1<p<2+\epsilon$ for some $\epsilon>0$ depending on the ellipticity constant and the BMO semi-norm of the coefficients. Moreover, we prove the $L^p$ estimates for some vertical square functions associated to $e^{-tL}$. In another article of the authors, these results are used to establish the solvability of the Dirichlet problem for elliptic equation ${\rm div}(A(x)\nabla u)=0$ in the upper half-space $(x,t)\in\mathbb{R}_+^{n+1}$ with the boundary data in $L^p(\mathbb{R}^n,dx)$ for some $p\in (1,\infty)$.
\end{abstract}

\section{Introduction and main results}
\label{intro}

This paper is motivated by the study of boundary value problems for elliptic operators having a BMO anti-symmetric part. These operators arise in the study of equations with divergence-free drift, e.g. $-\Delta u+\bd c\cdot\nabla u=0$ and $\Dt u+\bd c\cdot\nabla u-\Delta u=0$, where $c$ is a divergence-free vector field in $\Rn$. Seregin, Silvestre, {\v{S}}ver{\'a}k, and Zlato{\v{s}} (\cite{seregin2012divergence}) discovered that the condition $\divg \bd c=0$ can be used to relax the regularity assumptions on $\bd c$ under which one can prove the Harnack inequality and other regularity results for solutions. It turns out that $\bd c\in BMO^{-1}$ in the elliptic case, and $\bd c\in L^{\infty}(BMO^{-1})$ in the parabolic case are the right conditions, in the sense that the interior regularity theory of De Giorgi, Nash, and Moser carries over to these operators. Generalizing to elliptic or parabolic equations in divergence form, this condition is equivalent to assuming that the matrix $A$ can be decomposed into an $L^{\infty}$ elliptic symmetric part and an unbounded anti-symmetric part in a certain function space. In the elliptic case, the anti-symmetric part should belong to the John-Nirenberg space BMO (bounded mean oscillation) and, in the parabolic case, to $L^{\infty}(BMO)$. The space BMO plays an important role in two ways. First, this space has the appropriate scaling properties which appear naturally in the iterative arguments of De Giorgi-Nash-Moser. Secondly, the BMO condition on the anti-symmetric part of the matrix allows one to define suitable weak solutions. This latter fact is essentially due to an application of the div-curl lemma appearing in the theory of compensated compactness, and the details can be found in \cite{seregin2012divergence} or \cite{li2019boundary}.

These operators have gained much attention since \cite{seregin2012divergence}. In \cite{qian2018parabolic}, the authors showed the existence of the fundamental solution of the parabolic operator $L-\partial_t$, and derived Gaussian estimate for the fundamental solution. Later, H. Dong and S. Kim \cite{dong2018fundamental} have generalized the result for fundamental solutions to second-order parabolic systems, under the assumption that weak solutions of the system satisfy a certain local boundedness estimate. The investigation into boundary value problems for elliptic operators having a BMO anti-symmetric part was launched by the work \cite{li2019boundary}. There, the second and the fourth authors of this paper studied the boundary behavior of weak solutions as well as the Dirichlet problem for elliptic operators in divergence form with BMO anti-symmetric part.

In another direction, Escauriaza and the first author of this paper proved the Kato conjecture for elliptic operators having a BMO anti-symmetric part in \cite{escauriaza2018kato}. To be precise, they showed that the domain of the square root $\sqrt{L}$ contains $W^{1,2}(\Rn)$, and that \begin{equation}\label{Kato intro}
    \norm{\sqrt{L}f}_{L^2(\Rn)}\lesssim\norm{\nabla f}_{L^2(\Rn)}
\end{equation}
holds over $\dot{W}^{1,2}(\Rn)$. Their proof does not rely on the Gaussian estimates obtained in \cite{qian2018parabolic}.
The Kato conjecture dates back to the 60's, when T. Kato conjectured (\cite{Kato1966perturbation}, \cite{kato1961fractional}) that an abstract version of \eqref{Kato intro} might hold, for ``regularly accretive operators".  The conjecture was disproved by McIntosh (\cite{mcintosh1972comparability}), who then reformulated the conjecture for divergence form elliptic operators with complex, $L^\infty$, $n\times n$ matrix. The validity of the conjecture was established when the heat kernel of the operator $L$ satisfies the ``Gaussian property", first in 2 dimensions  (\cite{hofmann2002solution2d}) and then in all dimensions \cite{hofmann2002solutionalld}.
We say $L$ satisfies the Gaussian property if the kernel $K_t(x,y)$ of the operator $e^{-tL}$ satisfies the following:
for all $t>0$, for some constants $0<\beta,\mu\le1$ and $C$,
\begin{align*}
    \abs{K_t(x,y)}&\le Ct^{-\frac{n}{2}}e^{-\frac{\beta\abs{x-y}^2}{t}},\\
    \abs{K_t(x,y)-K_t(x+h,y)}&+\abs{K_t(x,y)-K_t(x,y+h)}\\
    &\le Ct^{-\frac{n}{2}}\br{\frac{\abs{h}}{t^{1/2}+\abs{x-y}}}^\mu e^{{-\frac{\beta\abs{x-y}^2}{t}}}
\end{align*}
when $2\abs{h}\le t^{1/2}+\abs{x-y}$.
The conjecture was solved for elliptic operators in divergence form with complex, bounded coefficients in \cite{auscher2002solution}, and, as we mentioned above, for operators considered in the present paper in \cite{escauriaza2018kato}.

The $L^2$ boundeness \eqref{Kato intro} naturally leads to the question about $L^p$ boundeness, $p\neq 2$. Namely, if $L$ is such that the domain of $L^{1/2}$ agrees with $W^{1,2}(\Rn)$, how do $\norm{L^{1/2}f}_{L^p}$ and $\norm{\nabla f}_{L^p}$ compare? It turns out that the ranges of $p$ for $\norm{L^{1/2}f}_{L^p}\lesssim\norm{\nabla f}_{L^p}$ and $\norm{\nabla f}_{L^p}\lesssim\norm{L^{1/2}f}_{L^p}$ can be different. In \cite{AST_1998__249__R1_0}, it is shown that for divergence form differential operators $L=-\divg (A\nabla)$, where $A$ is a matrix with complex-valued bounded entries and satisfying a uniform ellipticity condition, if $L$ has the Gaussian property, and that \eqref{Kato intro} and its corresponding inequality for $L^*$ hold, then
\begin{equation}\label{Lp kato intro}
    \norm{L^{1/2}f}_{L^p}\le c_p\norm{\nabla f}_{L^p} \qquad\forall\,1<p<\infty,
\end{equation}
\begin{equation}\label{reverseLp Kato intro}
    \norm{\nabla f}_{L^p}\le c_p'\norm{L^{1/2}f}_{L^p} \qquad\forall\, 1<p<2+\eps,
\end{equation}
for some $\eps>0$ depends only on $L$. The proof relies on a non-standard factorization of $L^{1/2}$ which makes Calder\'on-Zygmund theory fully available. We remark that although the Gaussian property is available for elliptic operators with a BMO anti-symmetric part (\cite{qian2018parabolic}), the results in \cite{AST_1998__249__R1_0} do not apply to this setting, mainly because the decomposition used in \cite{AST_1998__249__R1_0} requires the coefficients being bounded. In \cite{auscher2007necessary}, Auscher presents the $L^p$ boundedness results without a direct appeal to kernels of the operators. The main observation in \cite{auscher2007necessary} is that the limits of the interval of exponents $p\in[1,\infty]$ for which the semigroup is $L^p$ bounded, and the limits of the interval of exponents $p\in[1,\infty]$ for which $(\sqrt{t}\nabla e^{-tL})_{t>0}$ is $L^p$ bounded, fully describe the $L^p$ behavior of the square root operator, as well as some Littlewood-Paley-Stein type functionals (we simply call them square functions in this paper). We record here that the two vertical square functions studied in \cite{auscher2007necessary} are
\[
g_L(f)(x)=\br{\int_0^\infty\abs{(L^{1/2}e^{-tL}f)(x)}^2dt}^{1/2}
\]
and
\[
G_L(f)(x)=\br{\int_0^\infty\abs{(\nabla e^{-tL}f)(x)}^2dt}^{1/2}.
\]

For $L$ being an elliptic operator having a BMO anti-symmetric part, the $L^p$ boundedness for $L^{1/2}$ and square functions was unknown. While this question is interesting by itself, we are motivated also by the study of boundary value problems for these operators. Indeed, for divergence form operators with matrix in the ``block form'', that is, $L=\divg_{x,t}(A(x)\nabla_{x,t})$ with $A=\begin{bmatrix}
\begin{BMAT}{c.c}{c.c}
B & \mathbf{0} \\
\mathbf{0}^{\intercal} & 1
\end{BMAT}
\end{bmatrix}$, inequalities \eqref{Lp kato intro} and \eqref{reverseLp Kato intro} can be thought of as a ``Rellich identity" $\norm{\Dt u}_{L^p}\approx\norm{\nabla_xu}_{L^p}$. The latter plays an important role in the solvability of Neumann and regularity problem with $L^p$ data. See e.g. \cite{jerison1981neumann}, \cite{verchota1984layer} and \cite{kenig1993neumann}. Even more generally, for operators having a full $(n+1)\times (n+1)$ elliptic coefficients matrix $A(x)$, tools related to the Kato problem have been successfully used to tackle boundary value problems. In \cite{hofmann2015square}, for instance, the $L^p$ estimates for some square functions similar to $g_L$ and $G_L$ form parts of the proof of the $L^p$ solvability for elliptic operators with real, $L^\infty$, t-independent coefficients in the upper-half space for $p$ sufficiently large. However, details regarding these $L^p$ estimates are missing, and one did not know whether these estimates are valid for elliptic operators having a BMO anti-symmetric part.

In this paper, we create a complete narrative of the $L^p$ theory for the square root operator (Section \ref{Lp semigp sqrt sec}), and derive the $L^p$ estimates for the vertical square functions (Section \ref{Lp est for square functions subsect}). Let $L=-\divg (A\nabla)$ be an operator with real coefficients defined in $\Rn$, $n\ge 2$. Assume that the symmetric part of the $n\times n$ matrix $A$ is elliptic and $L^\infty$, and the anti-symmetric part is in $BMO(\Rn)$.  Our main results are the following:
\begin{enumerate}
       \item $\norm{L^{1/2}f}_{L^p}\lesssim\norm{\nabla f}_{L^p}$ for $1<p<\infty$, and $\norm{\nabla f}_{L^p}\lesssim\norm{L^{1/2}f}_{L^p}$ for $1<p<2+\epsilon_1$. (Theorem \ref{squareroot main thm})
    \item we have the $L^p$ square functions estimates
    \begin{equation*}
        \norm{\Big(\int_0^{\infty}\abs{tL e^{-t^2L}F}^2\frac{dt}{t}\Big)^{1/2}}_{L^p(\Rn)}\le C_p\norm{\nabla F}_{L^p(\Rn)}\quad\forall\,1<p<\infty,
    \end{equation*}
    (Proposition \ref{Lp_G1Prop})
    \begin{equation*}
\norm{\Big(\int_0^{\infty}\abs{t^2\nabla L e^{-t^2L}F}^2\frac{dt}{t}\Big)^{1/2}}_{L^p(\Rn)}\le C_p\norm{\nabla F}_{L^p(\Rn)} \quad\forall\,1<p<2+\epsilon_1,
     \end{equation*}
     (Proposition \ref{Lp_G2xProp})
    \begin{equation*}
\norm{\Big(\int_0^{\infty}\abs{t^2\Dt L e^{-t^2L}F}^2\frac{dt}{t}\Big)^{1/2}}_{L^p(\Rn)}\le C_p\norm{\nabla F}_{L^p(\Rn)}\quad\forall\,1<p<\infty.
\end{equation*}
(Proposition \ref{Lp_G2t prop})
\end{enumerate}
In these results, $\epsilon_1>0$ depends only on the ellipticity constant and the BMO semi-norm of the coefficients of the operator, and on dimension (Proposition \ref{sqrttnabla_L2-LpProp}).

To deal with the BMO coefficients, we need estimates on the Hardy norm of some functions of particular form. These are presented in Section \ref{Hardy norms subsec}. We give a precise definition of the operator $L$ in Section ~\ref{operators def subsection}, starting from a sesquilinear form.
The $L^p$ estimates for the square root and square functions rely on the off-diagonal estimates for the semigroup and $(\sqrt{t}\nabla e^{-tL})_{t>0}$, which are derived in Section ~\ref{Lp semigp sqrt sec}. Another key ingredient in proving the $L^p$ estimates for the square root is the representation formula for the Riesz transform, which we carefully justify in Proposition \ref{Riesztransform prop}. To prove the $L^p$ estimates for the square functions (Proposition \ref{Lp_G1Prop}--\ref{Lp_G2t prop}), we exploit the $L^p$ estimates for the square root operator and borrow some ideas from \cite{auscher2007necessary}.

 While the paper can be viewed independently as a part of an extensive theory of functional calculus of elliptic operators and associated Hardy spaces, for us it was mainly motivated by the demands coming from the theory of boundary value problems. In \cite{HLMPdirichlet}, to continue the work \cite{li2019boundary}, we study of $L^p$ Dirichlet problem for elliptic operators having a BMO anti-symmetric part. There, we are able to prove the Dirichlet problem with $L^p(dx)$ boundary data in the upper half-space $(x,t)\in \Rpl$, $n\ge 2$, is uniquely solvable for $p$ sufficiently large, for these operators under some natural structural assumptions on the matrix, namely, $t$-independent.  In \cite{HLMPdirichlet}, we use the Gaussian estimate for the $t$-derivatives of the heat kernel \eqref{Dtl_Kt_Gaussianbd} to derive the $L^p$ estimates for some non-tangential maximal functions. The $L^p$ estimates for the square functions (Proposition \ref{Lp_G1Prop}--\ref{Lp_G2t prop}) are used to carry out a refined integration by parts argument. The result in \cite{HLMPdirichlet} extends the work of Hofmann, Kenig, Mayboroda and Pipher (\cite{hofmann2015square}), which holds for elliptic operators in divergence form with real-valued, non-symmetric, $L^\infty$ and $t$-independent coefficients.

\section{Hardy Norms}\label{Hardy norms subsec}
\begin{defn}
 We say $f\in L^1(\Rn)$ is in the real Hardy space $\mathcal{H}^1(\Rn)$ if 
 \[
 \norm{f}_{\mathcal{H}^1(\Rn)}:=\norm{\sup_{t>0}\abs{h_t*f}}_{L^1(\Rn)}<\infty,
 \]
 where $h_t(x)=\frac{1}{t^n}h\br{\frac{x}{t}}$, and $h$ is any smooth non-negative function on $\Rn$, with $\supp h\subset B_1(0)$ such that $\int_{\Rn}h(x)dx=1$.
 \end{defn}

The following estimates shall be used frequently in the rest of the paper.
\begin{prop}\label{HardyProp1}
Let $1<p<\infty$. Let $u\in\dot{W}^{1,p}(\Rn)$, $v\in \dot{W}^{1,p'}(\Rn)$. Then for any $1\le i,j\le n$, $\Dj u\Di v-\Di u\Dj v\in\mathcal{H}^1(\Rn)$ with
\begin{equation}
  \norm{\Dj u\Di v-\Di u\Dj v}_{\mathcal{H}^1(\Rn)}\lesssim \norm{\nabla u}_{L^p}\norm{\nabla v}_{L^{p'}},
\end{equation}
where the implicit constant depends only on $p$ and dimension.
\end{prop}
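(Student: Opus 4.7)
The plan is to apply the div-curl lemma of Coifman--Lions--Meyer--Semmes. The first step is to rewrite the expression as
\begin{equation*}
\Dj u\,\Di v-\Di u\,\Dj v=B\cdot E,
\end{equation*}
where $E:=\nabla v\in L^{p'}(\Rn)$ is automatically curl-free as a gradient, and $B$ is the vector field in $L^p(\Rn;\Rn)$ whose $i$-th component equals $\Dj u$, whose $j$-th component equals $-\Di u$, and whose remaining components vanish. Then $\norm{B}_{L^p}\lesssim\norm{\nabla u}_{L^p}$, and $B$ is divergence-free in the sense of distributions since $\divg B=\Di\Dj u-\Dj\Di u=0$.

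By the Coifman--Lions--Meyer--Semmes theorem, for $1<p<\infty$ the pointwise inner product of a curl-free vector field in $L^{p'}(\Rn)$ with a divergence-free vector field in $L^{p}(\Rn)$ lies in $\mathcal{H}^1(\Rn)$, with norm controlled by the product of the two $L^r$ norms and a constant depending only on $p$ and dimension. Applied to the $E$ and $B$ constructed above, this immediately yields
\begin{equation*}
\norm{\Dj u\,\Di v-\Di u\,\Dj v}_{\mathcal{H}^1(\Rn)}\lesssim\norm{\nabla v}_{L^{p'}}\norm{\nabla u}_{L^{p}},
\end{equation*}
which is the desired bound.

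The main obstacle is essentially just the recognition of the div-curl structure; after that the result follows immediately from the compensated-compactness theorem of CLMS. If a self-contained proof is preferred, one can instead use the identity $\Dj u\,\Di v-\Di u\,\Dj v=\Dj(u\,\Di v)-\Di(u\,\Dj v)$ and pair against a test function $\phi\in BMO(\Rn)$ via the $\mathcal{H}^1$--$BMO$ duality, invoking commutator estimates of Coifman--Rochberg--Weiss type; this route is considerably longer but exploits the same underlying cancellation. One should also note that, although $u$ and $v$ only belong to homogeneous Sobolev spaces, the argument involves only their gradients, so no integrability issue for the functions themselves arises.
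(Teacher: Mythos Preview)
Your proof is correct and is precisely the standard approach the paper has in mind: the paper does not give its own proof of this proposition but simply refers to \cite{li2019boundary} and \cite{seregin2012divergence}, where the argument is exactly the application of the Coifman--Lions--Meyer--Semmes div-curl lemma (the paper's introduction explicitly mentions that ``the BMO condition on the anti-symmetric part of the matrix allows one to define suitable weak solutions'' via ``an application of the div-curl lemma appearing in the theory of compensated compactness''). Your identification of $B$ and $E$ is correct, and nothing more is needed.
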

We refer to \cite{li2019boundary} and \cite{seregin2012divergence} for its proof.

\begin{prop}\label{HardyProp3}
Let $1<p<\infty$. Let $u\in\dot{W}^{1,p}(\Rn)$, $v\in \dot{W}^{1,p'}(\Rn)$. Then for any $1\le i\le n$, $\Di(uv)\in\mathcal{H}^1(\Rn)$ with
\begin{equation}
  \norm{\Di(uv)}_{\mathcal{H}^1(\Rn)}\lesssim \norm{u}_{L^p}\norm{\nabla v}_{L^{p'}}+\norm{\nabla u}_{L^p}\norm{v}_{L^{p'}},
\end{equation}
where the implicit constant depends only on $p$ and dimension.
\end{prop}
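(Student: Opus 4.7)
By density of $\C_c^\infty(\Rn)$ in the homogeneous Sobolev spaces (modulo constants, which are killed by $\Di$), reduce to $u, v \in \C_c^\infty(\Rn)$. The strategy follows the Coifman--Lions--Meyer--Semmes paradigm that underpins Proposition \ref{HardyProp1}: extract a divergence-free vector field via a Hodge decomposition so that the CLMS div--curl lemma applies.

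Write the $L^p$ vector field $u\,e_i$ as
\[
u\,e_i = E + \nabla h, \qquad \divg E = 0, \quad h = \Delta^{-1}(\Di u).
\]
Then $\Dj h = -R_i R_j u$, where $R_j$ is the $j$-th Riesz transform, so by $L^p$-boundedness of Riesz transforms, $\norm{E}_{L^p} \lesssim \norm{u}_{L^p}$. The CLMS div--curl lemma, which is the engine behind Proposition \ref{HardyProp1}, gives
\[
E \cdot \nabla v \in \mathcal H^1(\Rn), \qquad \norm{E \cdot \nabla v}_{\mathcal H^1(\Rn)} \lesssim \norm{u}_{L^p} \norm{\nabla v}_{L^{p'}}.
\]
Since $\Delta h = \Di u$, an integration by parts and the product rule give
\[
E \cdot \nabla v = u \Di v - \nabla h \cdot \nabla v = u\Di v - \divg(v \nabla h) + v \Di u = \Di(uv) - \divg(v \nabla h),
\]
which yields the identity $\Di(uv) = E \cdot \nabla v + \divg(v \nabla h)$. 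The symmetric construction on $v\,e_i \in L^{p'}(\Rn;\Rn)$, with $k = \Delta^{-1}(\Di v)$ and $F$ divergence-free, produces the dual identity
\[
\Di(uv) = F \cdot \nabla u + \divg(u \nabla k),
\]
with $\norm{F \cdot \nabla u}_{\mathcal H^1} \lesssim \norm{\nabla u}_{L^p}\norm{v}_{L^{p'}}$.

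It remains to control the ``bad'' remainders $\divg(v\nabla h)$ and $\divg(u\nabla k)$ in $\mathcal H^1$. Since $\nabla h = -\nabla[\Di(-\Delta)^{-1}u]$ and $\nabla k = -\nabla[\Di(-\Delta)^{-1}v]$ are themselves gradients of Sobolev-regular Riesz potentials, combining the two identities above and integrating by parts reveals that the residual terms can be written as antisymmetric Jacobian-type combinations of the form $\Dj v\,\Dk[\Di(-\Delta)^{-1}u] - \Dk v\,\Dj[\Di(-\Delta)^{-1}u]$ (and the analogous expression with roles swapped), to which Proposition \ref{HardyProp1} applies directly.

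The main obstacle is obtaining the sharp mixed norm $\norm{u}_{L^p}\norm{\nabla v}_{L^{p'}} + \norm{\nabla u}_{L^p}\norm{v}_{L^{p'}}$ on the remainders, rather than the weaker $\norm{u}_{L^p}\norm{v}_{L^{p'}}$ that naive Hölder bounds on $\divg(v\nabla h)$ yield. Overcoming this requires using the cancellations between the two identities to transfer the derivative from the Riesz potential $\nabla h$ back onto $u$ (respectively from $\nabla k$ onto $v$) via repeated integration by parts, in direct analogy to the div--curl cancellation that places the Jacobian in Proposition \ref{HardyProp1} in $\mathcal H^1$.
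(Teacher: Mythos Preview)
Your approach is genuinely different from the paper's, and the gap you flag as ``the main obstacle'' is real and not closed by your sketch.

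The paper gives a direct, self-contained argument via the grand maximal function characterization of $\mathcal H^1$: for a radial mollifier $h_t$, integrate by parts to get $h_t*\Di(uv)(x)=-\int \Di h_t(x-y)\,u(y)v(y)\,dy$, then add and subtract the average $(v)_{B_t(x)}$. The oscillation piece is handled by Sobolev--Poincar\'e, giving a bound $(M|u|^\alpha)^{1/\alpha}(M|\nabla v|^\beta)^{1/\beta}$ with $1<\alpha<p$, $1<\beta<p'$; on the mean-value piece the derivative falls back on $u$, giving $M(|\nabla u|)\,M(v)$. H\"older and the $L^p$-boundedness of $M$ finish the proof in two lines. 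No Riesz transforms, no Hodge decomposition, no appeal to CLMS.

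In your route, after the Hodge step you have $\Di(uv)=E\cdot\nabla v+\divg(v\nabla h)$, and the remainder expands as $\divg(v\nabla h)=\nabla v\cdot\nabla h+v\,\Di u$. Neither summand is in $\mathcal H^1$ in general; for instance, $\nabla v\cdot\nabla h$ is a \emph{symmetric} inner product of two gradients, and taking $p=2$, $v=h$ gives $|\nabla h|^2\ge 0$, which cannot lie in $\mathcal H^1$ unless it vanishes. Your claim that combining with the dual identity produces antisymmetric Jacobians $\Dj v\,\Dk h-\Dk v\,\Dj h$ is not carried out, and I do not see a rewriting that makes it true: averaging the two identities yields $\Di(uv)=E\cdot\nabla v+F\cdot\nabla u+\nabla v\cdot\nabla h+\nabla u\cdot\nabla k$, and the residual $\nabla v\cdot\nabla h+\nabla u\cdot\nabla k$ is again a sum of symmetric pairings, with no visible curl structure. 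So the CLMS machinery does not apply to the leftover, and you have not supplied an alternative bound. The paper's add-and-subtract-the-average trick is precisely what replaces this missing cancellation.
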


\begin{proof}
Let $h$ be a smooth nonnegative compactly supported mollifier with $\int_{\Rn}h(x)dx=1$, $\supp h\subset B_1(0)$. And let $h_t(x)=t^{-n}h(\frac{x}{t})$. Then we have
\begin{align*}
    \quad h_t*\Di(uv)(x)&=\int_{\Rn}h_t(x-y)\Di(uv)(y)dy=-\int_{B_t(x)}\Di h_t(x-y)u(y)v(y)dy\\
    &=\int_{B_t(x)}\frac{1}{t^{n+1}}\Di h\br{\frac{x-y}{t}}u(y)\br{v(y)-(v)_{B_t(x)}}dy\\
    &\quad+\int_{B_t(x)}\frac{1}{t^{n+1}}\Di h\br{\frac{x-y}{t}}u(y)(v)_{B_t(x)}dy
    =: I_1+I_2.
\end{align*}
For $I_1$, we have
\begin{align*}
    \abs{I_1}&\lesssim\frac{1}{t^n}\int_{B_t(x)}\abs{u(y)}\abs{\frac{v(y)-(v)_{B_t(x)}}{t}}dy\\
    &\lesssim\br{\fint_{B_t(x)}\abs{u}^{\alpha}dy}^{1/{\alpha}}\br{\fint_{B_t(x)}\abs{\frac{v(y)-(v)_{B_t(x)}}{t}}^{\alpha'}dy}^{1/{\alpha'}}\\
    &\lesssim\br{\fint_{B_t(x)}\abs{u}^{\alpha}dy}^{1/{\alpha}}\br{\fint_{B_t(x)}\abs{\nabla v}^{\beta}dy}^{1/{\beta}}
    \lesssim\br{M\abs{u}^{\alpha}}^{\frac{1}{\alpha}}(x)\br{M\abs{\nabla v}^{\beta}}^{1/{\beta}}(x),
\end{align*}
where $\alpha\in [1,p)$, $\frac{1}{\alpha}+\frac{1}{\beta}=1+\frac{1}{n}$, and $M(f)$ is the Hardy-Littlewood maximal function of $f$. For $I_2$, note that $I_2=\int_{B_t(x)}h_t(x-y)\Di u(y)(v)_{B_t(x)}dy$. So
\begin{align*}
    \abs{I_2}&\lesssim\frac{1}{t^n}\int_{B_t(x)}\abs{\Di u}\abs{(v)_{B_t(x)}}dy
    \lesssim \fint_{B_t(x)}\abs{\nabla u}dy \fint_{B_t(x)}\abs{v}dy\\
    &\lesssim M(\abs{\nabla u})(x) M(v)(x).
\end{align*}
Combining the estimates for $I_1$ and $I_2$, and using H\"older inequality, we have
\begin{multline*}
     \int_{\Rn}\sup_{t>0}\abs{h_t*\Di(uv)(x)}dx\\
    \lesssim \norm{\br{M\abs{u}^{\alpha}}^{\frac{1}{\alpha}}}_{L^p}\norm{\br{M\abs{\nabla v}^{\beta}}^{1/{\beta}}}_{L^{p'}}
    +\norm{M(\abs{\nabla u})}_{L^p}\norm{ M(v)}_{L^{p'}}\\
    \lesssim\norm{u}_{L^p}\norm{\nabla v}_{L^{p'}}+\norm{\nabla u}_{L^p}\norm{v}_{L^{p'}},
\end{multline*}
where in the last inequality we have used that $1\le \alpha<p$ and $1<\beta<p'$.
 \end{proof}

\begin{prop}\label{HardyProp2}
Let $u$, $v\in W^{1,2}(\Rn)$, and $\vp$ be a Lipschitz function in $\Rn$. Then for any $1\le i,j\le n$, $\Dj(uv)\Di\vp-\Di(uv)\Dj\vp\in\mathcal{H}^1(\Rn)$ with
\begin{equation*}
  \norm{\Dj(uv)\Di\vp-\Di(uv)\Dj\vp}_{\mathcal{H}^1(\Rn)}\lesssim \norm{u\abs{\nabla \vp}}_{L^2}\norm{\nabla v}_{L^2}+\norm{v}_{L^2}\norm{\abs{\nabla u}\abs{\nabla\vp}}_{L^2},
\end{equation*}
or
\begin{equation}
\norm{\Dj(uv)\Di\vp-\Di(uv)\Dj\vp}_{\mathcal{H}^1(\Rn)}\lesssim \norm{\nabla \vp}_{L^{\infty}(\Rn)}\Big(\norm{u}_{L^2}\norm{\nabla v}_{L^2}+\norm{v}_{L^2}\norm{\nabla u}_{L^2}\Big),
\end{equation}
where the implicit constant depends only on dimension.

\end{prop}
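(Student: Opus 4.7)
The plan is to estimate $\norm{\sup_{t>0}\abs{h_t\ast f}}_{L^1(\Rn)}$ for $f=\Dj(uv)\Di\vp-\Di(uv)\Dj\vp$, imitating the mollifier strategy of Proposition \ref{HardyProp3}. Using the distributional identity $\Di\Dj\vp=\Dj\Di\vp$ (valid for Lipschitz $\vp$; if needed, first mollify $\vp$ and pass to the limit at the end), I recast $f$ in divergence form as $f=\Dj(uv\Di\vp)-\Di(uv\Dj\vp)$. Integration by parts in $y$ then transfers the divergence onto the mollifier, giving
\[h_t\ast f(x)=\int uv(y)\big[\Dj h_t(x-y)\Di\vp(y)-\Di h_t(x-y)\Dj\vp(y)\big]\,dy.\]
Splitting $v(y)=\big(v(y)-(v)_{B_t(x)}\big)+(v)_{B_t(x)}$ decomposes $h_t\ast f=I_t+II_t$, with the oscillation of $v$ in $I_t$ and the constant factor $(v)_{B_t(x)}$ in $II_t$.

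For $I_t$, I use $\abs{\nabla h_t(x-y)}\lesssim t^{-n-1}\1_{B_t(x)}(y)$ and keep $\abs{\nabla\vp(y)}$ grouped with $\abs{u(y)}$ (this is crucial for the sharper bound), obtaining
\[\abs{I_t(x)}\lesssim \frac{1}{t}\fint_{B_t(x)}\abs{u\nabla\vp}\cdot\abs{v-(v)_{B_t(x)}}\,dy.\]
H\"older's inequality with conjugate exponents $q,q'$ followed by the Sobolev-Poincar\'e inequality $\big(\fint\abs{v-(v)_{B_t}}^{q'}\big)^{1/q'}\lesssim t\big(\fint\abs{\nabla v}^\beta\big)^{1/\beta}$ with $\beta=nq'/(n+q')$ yields
\[\abs{I_t(x)}\lesssim (M\abs{u\nabla\vp}^q)^{1/q}(x)\cdot(M\abs{\nabla v}^\beta)^{1/\beta}(x).\]
For every $n\ge 2$ one can pick $q'>2$ close enough to $2$ so that both $q<2$ and $\beta<2$ (the requirement is $q'\in(2,2n/(n-2))$ when $n\ge3$, and $q'\in(2,\infty)$ when $n=2$); the Cauchy-Schwarz inequality together with the $L^{2/q}$ and $L^{2/\beta}$ boundedness of the Hardy-Littlewood maximal function then gives $\int\sup_t\abs{I_t}\,dx\lesssim \norm{u\abs{\nabla\vp}}_{L^2}\norm{\nabla v}_{L^2}$.

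For $II_t$, a second integration by parts moves $\Dj h_t,\Di h_t$ off the mollifier onto $u\Di\vp$ and $u\Dj\vp$, and another use of $\Di\Dj\vp=\Dj\Di\vp$ identifies the inner integral as $h_t\ast J_u(x)$ with $J_u:=\Dj u\Di\vp-\Di u\Dj\vp$. Since $\abs{J_u}\le 2\abs{\nabla u}\abs{\nabla\vp}$ and $\abs{(v)_{B_t(x)}}\le M\abs{v}(x)$, I obtain $\sup_t\abs{II_t(x)}\lesssim M\abs{v}(x)\cdot M\big(\abs{\nabla u}\abs{\nabla\vp}\big)(x)$, and Cauchy-Schwarz with the $L^2$ boundedness of $M$ produces $\int\sup_t\abs{II_t}\,dx\lesssim \norm{v}_{L^2}\norm{\abs{\nabla u}\abs{\nabla\vp}}_{L^2}$. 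Summing the two estimates and applying the maximal-function characterization of $\mathcal{H}^1$ proves the first (sharper) inequality; the second follows by bounding $\abs{\nabla\vp}\le\norm{\nabla\vp}_{L^\infty}$ pointwise. The main technical point I anticipate is keeping $\abs{\nabla\vp(y)}$ inside the integrand for the sharper bound while simultaneously arranging both Sobolev-Poincar\'e exponents below $2$; this is possible precisely because $2n/(n-2)>2$ for every $n\ge2$.
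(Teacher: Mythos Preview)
Your proposal is correct and follows essentially the same route as the paper's proof: both write $f=\divg(\vec{\Phi}uv)$ for the divergence-free field $\vec{\Phi}=(0,\dots,\Dj\vp,\dots,-\Di\vp,\dots,0)$ (your rewriting $f=\Dj(uv\Di\vp)-\Di(uv\Dj\vp)$ is exactly this), split $v=(v-(v)_{B_t})+(v)_{B_t}$, handle the oscillation piece via H\"older and Sobolev--Poincar\'e with the same exponent relation $\tfrac{1}{q}+\tfrac{1}{\beta}=1+\tfrac{1}{n}$, and use a second integration by parts together with $\Di\Dj\vp=\Dj\Di\vp$ (equivalently, $\divg\vec{\Phi}=0$) for the averaged piece. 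The only cosmetic difference is that the paper packages the cancellation through the divergence-free field $\vec{\Phi}$, whereas you carry out the same cancellation by hand.
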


\begin{proof}
We can assume $\vp\in C^2(\Rn)$. Set $\vec{\Phi}=(0,\dots,0,\Dj\vp,0,\dots,0,-\Di\vp,0,\dots,0)$. Then $\divg\vec{\Phi}=0$ and
\begin{equation}\label{Hardycrosstermid}
\Di(uv)\Dj\vp-\Dj(uv)\Di\vp=\vec{\Phi}\cdot\nabla(uv)=\divg(\vec{\Phi}uv).
\end{equation}
Let $h$ be a smooth nonnegative compactly supported mollifier with $\int_{\Rn}h(x)dx=1$, $\supp h\subset B_1(0)$. And let $h_t(x)=t^{-n}h(\frac{x}{t})$. We compute
\begin{align}
h_t*\divg (\vec{\Phi}uv)(x)&=-\int_{B_t(x)}\nabla_yh_t(x-y)\cdot\vec{\Phi}(y)u(y)v(y)dy\nonumber\\
&=-\int_{B_t(x)}\nabla_yh_t(x-y)\cdot\vec{\Phi}(y)u(y)(v(y)-(v)_{B_t(x)})dy\nonumber\\
&\quad -\int_{B_t(x)}\divg_y(h_t(x-y)\vec{\Phi}(y))u(y)(v)_{B_t(x)}dy\nonumber\\
&=-\int_{B_t(x)}\nabla_yh_t(x-y)\cdot\vec{\Phi}(y)u(y)(v(y)-(v)_{B_t(x)})dy\nonumber\\
&\quad +\int_{B_t(x)}h_t(x-y)\vec{\Phi}\cdot\nabla u(y)(v)_{B_t(x)}dy\nonumber\\
&=: I_1+I_2.
\end{align}

\begin{align*}
    \abs{I_1}&\lesssim\frac{1}{t^{n+1}}\int_{B_t(x)}\abs{\nabla\vp}\abs{u}\abs{v-(v)_{B_t(x)}}\\
    &\lesssim\Big(\fint_{B_t(x)}\abs{u\nabla\vp}^{\alpha}\Big)^{1/{\alpha}}
    \Big(\fint_{B_t(x)}(\frac{\abs{v-(v)_{B_t(x)}}}{t})^{\alpha'}\Big)^{\frac{1}{\alpha'}}\\
    &\lesssim\Big(\fint_{B_t(x)}\abs{u\nabla\vp}^{\alpha}\Big)^{1/{\alpha}}
    \Big(\fint_{B_t(x)}\abs{\nabla v}^{\beta}\Big)^{1/{\beta}}
    \lesssim M^{1/{\alpha}}(\abs{u\nabla\vp}^{\alpha})(x)M^{1/{\beta}}(\abs{\nabla v}^{\beta})(x),
\end{align*}
where $1<\alpha,\beta<2$, $\frac{1}{\alpha}+\frac{1}{\alpha'}=1$, $\frac{1}{\alpha}+\frac{1}{\beta}=1+\frac{1}{n}$. And
\begin{align*}
    \abs{I_2}\lesssim\Big(\fint_{B_t(x)}\abs{\nabla\vp}\abs{\nabla u}\Big)\abs{(v)_{B_t(x)}}
    \lesssim M(\abs{\nabla\vp}\abs{\nabla u})(x)M(v)(x).
\end{align*}
So
$$
\abs{h_t*\divg (\vec{\Phi}uv)(x)}\lesssim M^{1/{\alpha}}(\abs{u\nabla\vp}^{\alpha})(x)M^{1/{\beta}}(\abs{\nabla v}^{\beta})(x)
+ M(\abs{\nabla\vp}\abs{\nabla u})(x)M(v)(x),
$$
and thus
\begin{align*}
    \int_{\Rn}\sup_{t>0}\abs{h_t*\divg(\vec{\Phi}uv)(x)}dx
    \lesssim \norm{u\abs{\nabla \vp}}_{L^2}\norm{\nabla v}_{L^2}+\norm{v}_{L^2}\norm{\abs{\nabla u}\abs{\nabla\vp}}_{L^2}.
\end{align*}
By \eqref{Hardycrosstermid} and the definition of Hardy norm, we complete the proof.
 \end{proof}

\section{Sectorial operators and resolvent estimates}
\label{operators def subsection}
We give a precise definition for the operator $L$.

Let $\widetilde{W}^{-1,2}(\Rn)$ be the space of the bounded semilinear functionals on $W^{1,2}(\Rn)$. We say that $f\in \widetilde{W}^{-1,2}$ is semilinear if
$$
\langle f, \alpha u+\beta v\rangle_{\widetilde{W}^{-1,2},W^{1,2}}=\bar{\alpha}\act{f,u}+\bar{\beta}\act{f,v},
$$
whenever $\alpha$, $\beta\in\mathbb C$ and $u$, $v\in W^{1,2}(\Rn)$.\par
Define $\mathscr{L}: W^{1,2}(\Rn)\to \widetilde{W}^{-1,2}(\Rn)$ as follows
\begin{align*}
    \act{\mathscr{L}u,v}&=\int_{\Rn}A\nabla u\cdot\nabla \bar{v}\\
    &=\int_{\Rn}A^s\nabla u\cdot\nabla \bar{v}+\int_{\Rn}A^a\nabla u\cdot\nabla \bar{v},
\end{align*}
where $A=(a_{ij}(x))$ is $n\times n$, real, $A^s=\frac{1}{2}(A+A^{\intercal})=(a^s_{ij}(x))$ is of coefficients in $L^{\infty}(\Rn)$ and elliptic, i.e. there exists $0<\lambda_0\le1$ such that for all $x\in \Rn$,
\begin{equation*}
 \lambda_0\abs{\xi}^2\le a^s_{ij}(x)\xi_i\xi_j\quad\forall\, \xi\in\Rn, \quad \norm{A^s}_{\infty}\le\lambda_0^{-1},
\end{equation*}
and the coefficients of $A^a=\frac{1}{2}(A-A^{\intercal})=(a^a_{ij}(x))$ are in BMO$(\Rn)$, with
\begin{equation*}
    \norm{a^a_{ij}}_{BMO}:=\sup_{Q\subset\Rn}\fint_{Q}\abs{a^a_{ij}-(a^a_{ij})_{Q}}dx\le\Lambda_0
\end{equation*}
for some $\Lambda_0>0$, where $Q$ is any cube in $\Rn$.
Then by Proposition \ref{HardyProp1},
\begin{equation*}
    \abs{\act{\mathscr{L}u,v}}\le C\norm{\nabla u}_{L^2}\norm{\nabla v}_{L^2},
\end{equation*}
with $C$ depending on $\lambda_0$, $\Lambda_0$ and dimension.

Now define a sesquilinear form on $L^2(\Rn)\times L^2(\Rn)$: for any $u$, $v\in W^{1,2}(\Rn)$, let
\begin{equation*}
    t[u,v]=\act{\mathscr{L}u,v}.
\end{equation*}
The numerical range $\Theta(t)$ of $t$ is defined as
$$
\Theta(t):=\{t[u,u]: u\in D(t) \text{ with } \norm{u}_{L^2}=1\}.
$$
\begin{prop}\label{nr_t_Prop}
$t$ is a densely defined, closed, sectorial sesqulinear form in $L^2$, and there exists $0<\theta_0<\frac{\pi}{2}$ such that for any $\xi\in\Theta(t)$, $\abs{\arg\xi}\le\theta_0$.
\end{prop}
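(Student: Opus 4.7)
The plan is to first unpack the definition of $t$, then verify density and closedness, and finally establish sectoriality using the Hardy space machinery of Proposition \ref{HardyProp1} together with Fefferman duality.

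Density is immediate since $D(t)=W^{1,2}(\Rn)$ contains $C_c^\infty(\Rn)$, which is dense in $L^2(\Rn)$. For closedness, I would show that the form norm $(\operatorname{Re} t[u,u]+\norm{u}_{L^2}^2)^{1/2}$ is equivalent to the $W^{1,2}$ norm. Writing $u=u_1+iu_2$ with $u_1,u_2$ real, the antisymmetry of $A^a$ gives $A^a_{ij}\partial_i u\,\partial_j\bar u=iA^a_{ij}(\partial_i u_2\partial_j u_1-\partial_i u_1\partial_j u_2)$, whose contribution is purely imaginary (here we use that $A^a$ is real-valued and antisymmetric after summing over $i,j$). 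Consequently
\begin{equation*}
\operatorname{Re} t[u,u]=\int_{\Rn}A^s\nabla u\cdot\overline{\nabla u},
\end{equation*}
and the pointwise ellipticity of $A^s$ together with $\norm{A^s}_\infty\le\lambda_0^{-1}$ yield $\lambda_0\norm{\nabla u}_{L^2}^2\le\operatorname{Re} t[u,u]\le\lambda_0^{-1}\norm{\nabla u}_{L^2}^2$. Completeness of $W^{1,2}(\Rn)$ therefore transfers to $D(t)$ equipped with the form norm, giving closedness.

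The key step is to control the imaginary part. From the identity above,
\begin{equation*}
\operatorname{Im} t[u,u]=\sum_{i,j}\int_{\Rn}a^a_{ij}(\partial_i u_2\partial_j u_1-\partial_i u_1\partial_j u_2)\,dx,
\end{equation*}
and Proposition \ref{HardyProp1} with $p=2$ tells us that for each $i,j$ the integrand (before multiplying by $a^a_{ij}$) belongs to $\mathcal H^1(\Rn)$ with norm controlled by $\norm{\nabla u_1}_{L^2}\norm{\nabla u_2}_{L^2}\le\norm{\nabla u}_{L^2}^2$. Since $a^a_{ij}\in \mathrm{BMO}(\Rn)$ with $\norm{a^a_{ij}}_{\mathrm{BMO}}\le\Lambda_0$, the Fefferman $\mathcal H^1$--$\mathrm{BMO}$ duality yields
\begin{equation*}
\abs{\operatorname{Im} t[u,u]}\le C\Lambda_0\norm{\nabla u}_{L^2}^2\le\frac{C\Lambda_0}{\lambda_0}\operatorname{Re} t[u,u].
\end{equation*}
Setting $\theta_0:=\arctan(C\Lambda_0/\lambda_0)$, this gives $\abs{\arg t[u,u]}\le\theta_0<\pi/2$ whenever $\nabla u\ne 0$, which is equivalent to sectoriality of $t$ (and $\Theta(t)$ may be shifted to lie in the right half-plane if needed, but here $\operatorname{Re} t[u,u]\ge 0$ already, so no shift is required).

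The only nontrivial point is the handling of $\operatorname{Im} t[u,u]$, since $A^a$ is unbounded; the main obstacle is to recognize that the quadratic expression $\partial_i u_2\partial_j u_1-\partial_i u_1\partial_j u_2$ has enough cancellation (null-Lagrangian / Jacobian structure) to place it in $\mathcal H^1$, which is exactly what Proposition \ref{HardyProp1} provides. Once this is in hand, the rest follows from standard sectorial-form theory.
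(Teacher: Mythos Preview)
Your proposal is correct and follows essentially the same approach as the paper: both establish density trivially, closedness via the equivalence of the form norm with the $W^{1,2}$ norm (the paper phrases this as a direct sequence argument, but the content is identical), and sectoriality by bounding $\abs{\operatorname{Im} t[u,u]}$ against $\operatorname{Re} t[u,u]$ using Proposition~\ref{HardyProp1} and $\mathcal{H}^1$--BMO duality. Your observation that the $A^s$ contribution to $\operatorname{Im} t[u,u]$ vanishes pointwise by symmetry is a minor sharpening; the paper writes down that term but bounds it (trivially) along with the $A^a$ term.
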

\begin{proof}
The domain $D(t)$ of $t$ is $W^{1,2}(\Rn)$, which is dense in $L^2$. So $t$ is densely defined. To see that it is closed, let $u_n\in D(t)$, $u_n\to u$ in $L^2$ and $t[u_n-u_m,u_n-u_m]\to 0$. We want to show that $u\in D(t)$ and $t[u_n-u,u_n-u]\to 0$. Since $t[u_n-u_m,u_n-u_m]\to 0$,
$$
\lambda_0\abs{\nabla(u_n-u_m)}\le\Re\int_{\Rn}A\nabla(u_n-u_m)\cdot\nabla\overline{(u_n-u_m)}\to 0.
$$
So $\{u_n\}$ is a Cauchy sequence in $W^{1,2}(\Rn)$, which implies that $u\in W^{1,2}=D(t)$ and
$$
\abs{t[u_n-u,u_n-u]}\le\Lambda_0\norm{\abs{\nabla(u_n-u)}}_{L^2}\to 0.
$$
Now we show that $t$ is sectorial, i.e., its numerical range $\Theta(t)$ is a subset of a sector of the form
\begin{equation*}
    \abs{\arg(\xi-\gamma)}\le\theta, \quad\text{for some } 0\le\theta<\frac{\pi}{2} \text{ and } \gamma\in\Real.
\end{equation*}
For $u\in D(t)$ with $\norm{u}_{L^2}=1$, write $u=u_1+iu_2$. Then
\begin{equation*}
    \Re \act{\mathscr{L}u, u}=\int_{\Rn}a^s_{ij}(\Dj u_1\Di u_1+\Dj u_2\Di u_2)\ge\lambda_0\int_{\Rn}\abs{\nabla u}^2,
\end{equation*}
\begin{equation*}
    \Im\act{\mathscr{L}u, u}=\int_{\Rn}a^s_{ij}(\Dj u_2\Di u_1-\Dj u_1\Di u_2)+\int_{\Rn}a^a_{ij}(\Dj u_2\Di u_1-\Dj u_1\Di u_2).
\end{equation*}
By Proposition \ref{HardyProp1}, $\abs{\Im\act{\mathscr{L}u,u}}\le C\int_{\Rn}\abs{\nabla u}^2$, with $C$ depending on $\lambda_0$, $\Lambda_0$ and $n$. This implies that
$$
\frac{\abs{\Im\act{\mathscr{L}u,u}}}{\Re \act{\mathscr{L}u, u}}\le C, \text{ with } C=C(\lambda_0, \Lambda_0, n).
$$
Therefore, there exists $0<\theta_0=\theta_0(\lambda_0, \Lambda_0, n)<\frac{\pi}{2}$ such that for any $\xi\in\Theta(t)$, $\abs{\xi}\le\theta_0$.
 \end{proof}

Then by \cite{kato1976perturbation} Chapter VI Theorem 2.1 and its proof, we obtain
\begin{lem}
There is a unique m-accretive, sectorial operator $L:D(L)\subset L^2(\Rn)\to L^2(\Rn)$ such that
\begin{enumerate}
    \item $D(L)\subset D(t)=W^{1,2}(\Rn)$, and $D(L)$ is dense in $D(t)$ with respect to the $W^{1,2}$ norm.
    \item $(Lu,v)=t[u,v]$ for all $u\in D(L)$, $v\in D(t)$. Here $(\cdot,\cdot)$ is the inner product on complex $L^2(\Rn)$.
    \item If $u\in D(t)$, $w\in L^2(\Rn)$, and $t[u,v]=(w,v)$ for any $v\in D(L)$, then $u\in D(L)$ and $Lu=w$.
\end{enumerate}
By m-accretive we mean that $(L+\lambda I)^{-1}$ is a bounded operator on $L^2$ for any $\Re\,\lambda>0$, and $\norm{(L+\lambda I)^{-1}}_{L^2\to L^2}\le (\Re\,\lambda)^{-1}$ .
\end{lem}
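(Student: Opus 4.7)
The strategy is a direct application of the representation theorem for closed, densely defined, sectorial sesquilinear forms, i.e.\ Theorem 2.1 of Chapter VI in \cite{kato1976perturbation}. All the hypotheses of that theorem have been verified in Proposition \ref{nr_t_Prop}: the form $t$ has domain $D(t)=W^{1,2}(\Rn)$, which is dense in $L^2(\Rn)$; the proof that $\{u_n\}$ is Cauchy in $W^{1,2}$ whenever $t[u_n-u_m, u_n-u_m]\to 0$ and $u_n\to u$ in $L^2$ gives the closedness; and the numerical range lies in a fixed sector $\{\xi:\abs{\arg\xi}\le\theta_0\}$ with $\theta_0<\pi/2$. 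Kato's theorem then produces a unique m-sectorial operator $L:D(L)\subset L^2\to L^2$ satisfying properties (1)--(3) verbatim; the density of $D(L)$ in $D(t)=W^{1,2}$ in the graph (equivalently $W^{1,2}$) norm is part of the conclusion of that theorem, stemming from the fact that $D(L)$ is constructed as a core of the form.

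The only substantive item not explicitly stated in Kato's representation theorem is the resolvent estimate $\norm{(L+\lambda I)^{-1}}\le (\Re\lambda)^{-1}$ for $\Re\lambda>0$. This is standard and will be derived directly. Accretivity of $L$ is immediate: for $u\in D(L)$,
\begin{equation*}
\Re(Lu,u)=\Re t[u,u]=\int_{\Rn}a_{ij}^s\,\Dj u_1\Di u_1 + a_{ij}^s\,\Dj u_2\Di u_2\,dx\ge \lambda_0\norm{\nabla u}_{L^2}^2\ge 0.
\end{equation*}
Pairing $(L+\lambda I)u=f$ with $u$ and taking real parts yields
\begin{equation*}
(\Re\lambda)\,\norm{u}_{L^2}^2\le \Re(Lu,u)+(\Re\lambda)\norm{u}_{L^2}^2=\Re(f,u)\le \norm{f}_{L^2}\norm{u}_{L^2},
\end{equation*}
so $\norm{u}_{L^2}\le (\Re\lambda)^{-1}\norm{f}_{L^2}$. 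For any $\lambda$ with $\Re\lambda>0$, the point $-\lambda$ lies in the complement of the sector containing $\Theta(t)$, so m-sectoriality of $L$ gives that $L+\lambda I$ is bijective from $D(L)$ onto $L^2$, i.e.\ $(L+\lambda I)^{-1}$ is everywhere defined, and the apriori bound just derived upgrades to the operator-norm inequality.

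The main obstacle here is essentially conceptual rather than technical: one has to check carefully that Proposition \ref{HardyProp1} genuinely validates the boundedness $\abs{t[u,v]}\lesssim \norm{\nabla u}_{L^2}\norm{\nabla v}_{L^2}$ (needed for $t$ to be defined on all of $W^{1,2}\times W^{1,2}$ in the first place, since $A^a$ is unbounded), and that the real part retains coercivity despite the BMO anti-symmetric part contributing to the imaginary part only. Both are already dealt with in Proposition \ref{nr_t_Prop}, so once those are in hand, the present lemma is a completely formal consequence of Kato's theorem together with the short accretivity computation above.
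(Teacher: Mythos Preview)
Your proposal is correct and matches the paper's approach: the paper simply invokes \cite{kato1976perturbation} Chapter VI, Theorem 2.1 (and its proof) after having verified the hypotheses in Proposition \ref{nr_t_Prop}. Your additional explicit derivation of the resolvent bound $\norm{(L+\lambda I)^{-1}}\le(\Re\lambda)^{-1}$ from accretivity is a welcome clarification, but otherwise the argument is identical.
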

This lemma implies that there is a unique $L: D(L)\subset L^2(\Rn)\to L^2(\Rn)$ with its domain dense in $W^{1,2}$ corresponding to $\mathscr{L}: W^{1,2}\to \widetilde{W}^{-1,2}$, and $(Lu,v)=\act{\mathscr{L}u,v}$ for any $u\in D(L)$, $v\in W^{1,2}$.

We denote by $\Theta(L)$ the numerical range of $L$:
$$
\Theta(L)=\{(Lu,u): u\in D(L) \text{ with } \norm{u}_{L^2}=1.\}.
$$
Let $T=-L$. Denote the resolvent set of $T$ by $\rho(T)$. Note that since $L$ is m-accretive, $\{\lambda\in\mathbb C:\Re\,\lambda<0\}\subset\rho(L)$. So $\lambda\in\rho(T)$ whenever $\Re\,\lambda>0$. Let $\Sigma_0=\mathbb{C}\setminus\overline{\Theta(T)}$, and denote by $\hat{\Sigma}_0$ the component of $\Sigma_0$ that contains $\Real^+$.
\begin{lem}\label{LemresolventL2}
$\hat{\Sigma}_0\subset\rho(T)$. And for any $\lambda\in\hat{\Sigma}_0$,
\begin{equation*}
    \norm{(\lambda I-T)^{-1}}_{L^2\to L^2}=\norm{(\lambda I+L)^{-1}}_{L^2\to L^2}\le\frac{1}{\dist(\lambda;\overline{\Theta(T)})}.
\end{equation*}
\end{lem}
\begin{proof}
For any fixed $\lambda\in\Sigma_0$, for any $u\in D(T)$ with $\norm{u}_{L^2}=1$, we have
\[
    0<\dist(\lambda,\overline{\Theta(T)})\le\abs{\lambda-(Tu,u)}=\abs{((\lambda I-T)u,u)}\le\norm{(\lambda I-T)u}_{L^2}.
\]
Hence, if $\lambda\in\rho(T)$, then
\begin{equation}\label{resolventL2bound1}
    \norm{(\lambda I-T)^{-1}}_{L^2\to L^2}\le \frac{1}{\dist(\lambda;\overline{\Theta(T)})}.
\end{equation}
Now we show $\hat{\Sigma}_0\subset\rho(T)$. Consider $\rho(T)\cap\hat{\Sigma}_0$. It is nonempty since $\Real^+\subset\rho(T)\cap\hat{\Sigma}_0$. The fact that $\rho(T)$ is open implies that $\rho(T)\cap\hat{\Sigma}_0$ is open in $\hat{\Sigma}_0$. But it is also closed in $\hat{\Sigma}_0$ since $\lambda_n\in\rho(T)\cap\hat{\Sigma}_0$ and $\lambda_n\to\lambda\in\hat{\Sigma}_0$ imply for $n$ large enough, $\dist(\lambda_n,\overline{\Theta(T)})>\frac{1}{2}\dist(\lambda,\overline{\Theta(T)})$, and consequently for $n$ large enough
$\abs{\lambda_n-\lambda}<\dist(\lambda_n,\overline{\Theta(T)})$. Write $\lambda I-T=(\lambda_n I-T)(I+(\lambda-\lambda_n)(\lambda_n I-T)^{-1})$. From \eqref{resolventL2bound1},
\[
    \norm{(\lambda-\lambda_n)(\lambda_n I-T)^{-1}}\le\abs{\lambda-\lambda_n}\norm{(\lambda_n I-T)^{-1}}
    \le \frac{\abs{\lambda-\lambda_n}}{\dist(\lambda_n;\overline{\Theta(T)})}<\frac{1}{2},
\]
which implies that $(I+(\lambda-\lambda_n)(\lambda_n I-T)^{-1})^{-1}$ is bounded in $L^2$, and consequently so is $(\lambda I-T)^{-1}$, i.e. $\lambda\in\rho(T)$. This implies that $\rho(T)\cap\hat{\Sigma}_0$ is closed in $\hat{\Sigma}_0$. By the connectedness of $\hat{\Sigma}_0$, $\rho(T)\cap\hat{\Sigma}_0=\hat{\Sigma}_0$, or $\hat{\Sigma}_0\subset\rho(T)$.
 \end{proof}
Fix a $\theta_1\in(\theta_0,\frac{\pi}{2})$. Let
$\Gamma_{\pi-\theta_1}=\{\lambda\in\mathbb{C}:\lambda\neq0, \abs{\arg{\lambda}}\le\pi-\theta_1\}$. Then $\Gamma_{\pi-\theta_1}\subset\rho(T)$ and there exists a $c_0=c_0(\theta_0,\theta_1)\ge1$ such that for any $\lambda\in\Gamma_{\pi-\theta_1}$,
\begin{equation}\label{lambda_dist_theta1}
    \dist(\lambda;\overline{\Theta(T)})\ge\frac{\abs{\lambda}}{c_0}.
\end{equation}
\begin{cor}\label{L_resolv_cor}
There exists a $C=C(\theta_0,\theta_1,\lambda_0)>0$ such that
for any $\lambda\in\Gamma_{\pi-\theta_1}$,
$$
\norm{(\lambda I+L)^{-1}}_{L^2\to L^2}+\frac{1}{\abs{\lambda}^{1/2}}\norm{\nabla(\lambda I+L)^{-1}}_{L^2\to L^2}\le \frac{C}{\abs{\lambda}}.
$$
\end{cor}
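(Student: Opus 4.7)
The plan is to obtain the two bounds in the corollary separately. The $L^2$-to-$L^2$ bound is essentially already in hand: since $\lambda \in \Gamma_{\pi-\theta_1} \subset \hat{\Sigma}_0 \subset \rho(T)$ by Lemma \ref{LemResolvantL2}, we combine the resolvent estimate $\|(\lambda I + L)^{-1}\|_{L^2\to L^2} \leq \dist(\lambda; \overline{\Theta(T)})^{-1}$ with the distance bound \eqref{lambda_dist_theta1} to conclude $\|(\lambda I + L)^{-1}\|_{L^2\to L^2} \leq c_0/|\lambda|$, which handles the first term on the left-hand side.

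For the gradient estimate, I would fix $f \in L^2$, set $u = (\lambda I + L)^{-1}f \in D(L) \subset W^{1,2}$, and test the relation $\lambda u + L u = f$ against $u$ itself in the $L^2$ inner product. This yields
\begin{equation*}
\lambda \|u\|_{L^2}^2 + t[u,u] = (f,u).
\end{equation*}
Taking real parts and using exactly the computation from the proof of Proposition \ref{nr_t_Prop} — namely that the antisymmetric coefficients contribute nothing to $\Re t[u,u]$ (by the antisymmetry argument carried out on the real and imaginary parts $u_1, u_2$ via Proposition \ref{HardyProp1}), while the symmetric part gives $\Re t[u,u] \geq \lambda_0 \|\nabla u\|_{L^2}^2$ — one obtains
\begin{equation*}
\lambda_0 \|\nabla u\|_{L^2}^2 \leq \Re(f,u) - (\Re \lambda)\|u\|_{L^2}^2 \leq \|f\|_{L^2}\|u\|_{L^2} + |\lambda|\,\|u\|_{L^2}^2.
\end{equation*}

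Plugging in the already established bound $\|u\|_{L^2} \leq c_0 |\lambda|^{-1} \|f\|_{L^2}$ into both terms on the right yields $\lambda_0 \|\nabla u\|_{L^2}^2 \leq (c_0 + c_0^2)|\lambda|^{-1} \|f\|_{L^2}^2$, which rearranges to $\|\nabla u\|_{L^2} \leq C |\lambda|^{-1/2} \|f\|_{L^2}$ with $C$ depending only on $\theta_0, \theta_1, \lambda_0$. Since $f$ was arbitrary, this delivers the desired gradient bound.

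The only subtle point — and the place where one must not be sloppy — is the step $\Re t[u,u] \geq \lambda_0 \|\nabla u\|_{L^2}^2$, because the antisymmetric part of $A$ is merely BMO, so that term must be interpreted through the Hardy–BMO pairing of Proposition \ref{HardyProp1}. Once one writes $u = u_1 + iu_2$ and expands, however, the real part of the $A^a$ contribution collapses by the pointwise antisymmetry of the bilinear form $\sum_{ij} a^a_{ij}\Dj v \Di v$ (equivalently, by Proposition \ref{HardyProp1} applied with $v=u_k$, which yields vanishing pairing). The rest of the argument is standard coercivity. Note also that the sign of $\Re \lambda$ is irrelevant here: even when $\Re \lambda < 0$ (which occurs in $\Gamma_{\pi-\theta_1}$ for $|\arg\lambda| > \pi/2$), we simply absorb $|\Re\lambda|\,\|u\|_{L^2}^2 \leq |\lambda|\,\|u\|_{L^2}^2$ via the $L^2$ resolvent bound.
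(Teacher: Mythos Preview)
Your proof is correct and follows essentially the same approach as the paper: the $L^2$ resolvent bound comes directly from Lemma~\ref{LemResolvantL2} together with \eqref{lambda_dist_theta1}, and the gradient bound comes from testing $(\lambda I+L)u=f$ against $u$, using the ellipticity $\Re\, t[u,u]\ge\lambda_0\|\nabla u\|_{L^2}^2$, and feeding the $L^2$ resolvent bound back in. The paper's write-up is the same argument, merely phrased with $u\in D(L)$ and $(\lambda I+L)u$ in place of your $f$.
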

\begin{proof}
It follows immediately from Lemma \ref{LemresolventL2} and \eqref{lambda_dist_theta1} that
\begin{equation*}
\norm{(\lambda I+L)^{-1}}_{L^2\to L^2}\le\frac{c_0}{\abs{\lambda}}.
\end{equation*}
Since $\Gamma_{\pi-\theta_1}\subset\rho(T)$, for any $\lambda\in\Gamma_{\pi-\theta_1}$, the range of $\lambda I+L$ is $L^2(\Rn)$. Also, for any $u\in D(L)$, $((\lambda I+L)u,u)=\act{(\mathscr{L}+\lambda)u,u}$, which gives that
\begin{equation*}
    \Re ((\lambda I+L)u,u)\le\norm{(\lambda I+L)u}_{L^2}\norm{u}_{L^2}.
\end{equation*}
On the other hand,
\begin{equation*}
    \Re ((\lambda I+L)u,u)=\Re(\int_{\Rn}A\nabla u\cdot\nabla\bar{u}+\lambda\int_{\Rn}\abs{u}^2)
    \ge\lambda_0\norm{\nabla u}^2_{L^2}+\Re\lambda\norm{u}_{L^2}^2.
\end{equation*}
Therefore,
\begin{equation*}
    \lambda_0\int_{\Rn}\abs{\nabla u}^2\le\abs{\lambda}\norm{u}_{L^2}^2+\norm{(\lambda I+L)u}_{L^2}\norm{u}_{L^2}.
\end{equation*}
Since
$$
\norm{u}_{L^2}=\norm{(\lambda I+L)^{-1}(\lambda I+L)u}_{L^2}\le\frac{c_0}{\abs{\lambda}}\norm{(\lambda I+L)u}_{L^2},
$$
one obtains $\norm{\nabla u}_{L^2}\le C(c_0,\lambda_0)\abs{\lambda}^{-1/2}\norm{(\lambda I+L)u}_{L^2}$, and consequently
\begin{equation*}
    \norm{\nabla(\lambda I+L)^{-1}}\le C(c_0,\lambda_0)\abs{\lambda}^{-1/2}.
\end{equation*}
 \end{proof}

\section{$L^p$ theory for the semigroup and square roots}
\label{Lp semigp sqrt sec}

The resolvent estimates we derived in Corollary \ref{L_resolv_cor} imply that there exists an analytic contraction semigroup $e^{-tL}$ on $L^2$ generated by $-L$. The semigroup can be expressed as the contour integral (see e.g. \cite{pazy1983semigroups} Chapter 1, Theorem 7.7)
\begin{equation*}
    e^{-tL}=\frac{1}{2\pi i}\int_{\Gamma}e^{t\lambda}(L+\lambda I)^{-1}d\lambda,
\end{equation*}
where the path $\Gamma$ consists of two half-rays $\Gamma_{\pm}=\set{\lambda=re^{\pm i(\pi-\theta_1)},r\ge R}$ and of the arc $\Gamma_0=\set{\lambda=Re^{i\theta},\abs{\theta}\le\pi-\theta_1}$, for any fixed $R>0$, $\theta_1\in (\theta_0,\frac{\pi}{2})$, where $\theta_0$ is as in Proposition \ref{nr_t_Prop}. It follows that $e^{-tL}$ is holomorphic in $t$ in the open sector $\abs{\arg t}<\frac{\pi}{2}-\theta_0$.

Let $K_t(x,y)$ be the kernel of $e^{-tL}$.
The results by Qian and Xi \cite{qian2018parabolic} show that the operator $L$ satisfies the ``Gaussian Property", that is,
$K_t(x,y)$ satisfies the following bounds: there are some constants $C=C(\lambda_0,\Lambda_0,n)$, $\beta=\beta(\lambda_0,\Lambda_0,n)\in (0,1)$ and $\mu_0=\mu_0(\lambda_0,\Lambda_0,n)\in (0,1)$, such that for all $t>0$, $0<\mu<\mu_0$
\begin{align}
    \abs{K_t(x,y)}&\le Ct^{-\frac{n}{2}}e^{-\frac{\beta\abs{x-y}^2}{t}},\label{K_t_bd}\\
    \abs{K_t(x,y)-K_t(x+h,y)}&+\abs{K_t(x,y)-K_t(x,y+h)}\nonumber\\
    &\quad\le Ct^{-\frac{n}{2}}\br{\frac{\abs{h}}{t^{1/2}+\abs{x-y}}}^{\mu}e^{{-\frac{\beta\abs{x-y}^2}{t}}}\label{K_t_Holderbd}
\end{align}
when $2\abs{h}\le t^{1/2}+\abs{x-y}$.

We remark that the Gaussian estimates for the $t$-derivatives of the kernel $K_t(x,t)$ can be derived from the Gaussian estimates (\cite{ouhabaz2009analysis} Theorem 6.17):
\begin{prop}
For any $l\in\mathbb{N}$, $\partial_t^l K_t(x,y)$ satisfies the following estimates: there are some constants $C=C(\lambda_0,\Lambda_0,n)$, $\beta=\beta(\lambda_0,\Lambda_0,n)\in (0,1)$, such that for all $t>0$,
\begin{align}
    \abs{\partial_t^l K_t(x,y)}&\le C_l t^{-\frac{n}{2}-l}e^{-\frac{\beta\abs{x-y}^2}{t}}\label{Dtl_Kt_Gaussianbd}
\end{align}

\end{prop}

The $L^2$ (actually, $L^p$ for all $1\le p<\infty$) estimate for $\partial_t^l e^{-tL}$ follows from \eqref{Dtl_Kt_Gaussianbd} immediately: for all $t>0$,
\begin{equation}\label{DtetL_L2bd}
    \norm{\partial_t^l e^{-tL}}_{L^2\to L^2}\le c_l t^{-l}\qquad\text{for all } l\in\mathbb{N}.
\end{equation}

Moreover, we have for all $t>0$,
\begin{align}\label{nablaetL_L2bd}
        \norm{\nabla e^{-tL}}_{L^2\to L^2}\le Ct^{-\frac{1}{2}}.
\end{align}

\eqref{nablaetL_L2bd} can be derived from \eqref{DtetL_L2bd} as follows:
\begin{align*}
    \norm{\nabla e^{-tL}f}_{L^2}^2&\le\frac{1}{\lambda_0}\Re \int_{\Rn}A\nabla e^{-tL}\cdot \overline{\nabla e^{-tL}f}\\
    &\le\frac{1}{\lambda_0}\norm{\Dt e^{-tL}f}_{L^2}\norm{e^{-tL}f}_{L^2}\le C\,t^{-1}\norm{f}_{L^2}^2.
\end{align*}

\subsection{$L^2$ off-diagonal estimates for the semigroup}

\begin{defn}
Let $\mathcal{T}=(T_t)_{t>0}$ be a family of operators. We say that $\mathcal{T}$ satisfies $L^2$ off-diagonal estimates if for some constants $C\ge0$ and $\alpha>0$ for all closed sets $E$ and $F$, all $h\in L^2$ with support in $E$ and all $t>0$ we have
\begin{equation}\label{off-diag_Defn ineq}
  \norm{T_th}_{L^2(F)}\le Ce^{-\frac{\alpha d(E,F)^2}{t}}\norm{h}_{L^2}.
\end{equation}
Here and subsequntly, $d(E,F)$ is the semi-distance induced on sets by the Euclidean distance.

If $\mathcal{T}=(T_z)_{z\in\Sigma_{\mu}}$ is a family defined on a complex sector $\Sigma_{\mu}$ with $0\le\mu<\frac{\pi}{2}$, then we adopt the same definition and replace $t$ by $\abs{z}$ in the right hand side of \eqref{off-diag_Defn ineq}. In this case, the constants $C$ and $\alpha$ may depend on the angle $\mu$.
\end{defn}

\begin{prop}\label{L2off-diag Prop}
There exists $\omega_0=\omega_0(n,\lambda_0,\Lambda_0)\in(0,\frac{\pi}{2})$, such that for all $\mu\in(0,\frac{\pi}{2}-\omega_0)$, the families $(e^{-zL})_{z\in\Sigma_{\mu}}$, $(zLe^{-zL})_{z\in\Sigma_{\mu}}$ and $(\sqrt{z}\nabla e^{-zL})_{z\in\Sigma_{\mu}}$ satisfy $L^2$ off-diagonal estimates.
\end{prop}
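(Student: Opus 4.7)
The plan is to carry out Davies' exponential perturbation argument using the conjugated operators $L_\Psi=e^{-\Psi}\mathscr{L}e^{\Psi}$ of Section~\ref{operators def subsection}, but with a shift that is \emph{quadratic} in $s\doteq\|\nabla\Psi\|_{L^\infty}$ rather than linear as in Proposition~\ref{nr_tPsid_Prop}. This refined shift is exactly what separates Gaussian decay from merely exponential decay after optimization in $s$.

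The decisive step is to show that there exist constants $C=C(n,\lambda_0,\Lambda_0)>0$ and $\omega_0=\omega_0(n,\lambda_0,\Lambda_0)\in(0,\pi/2)$ so that for \emph{every} bounded Lipschitz $\Psi$ the operator $L_\Psi+Cs^2$ is m-accretive and sectorial with semi-angle $\omega_0$, uniformly in $s$. Expanding \eqref{Lpsi}, one observes that the two symmetric $A^s$ cross-terms are complex conjugates and hence contribute nothing to $\mathrm{Re}\langle L_\Psi u,u\rangle$; the antisymmetric cross-term $\int A^a\nabla|u|^2\cdot\nabla\Psi$ is bounded by $C\,s\,\|u\|_{L^2}\|\nabla u\|_{L^2}$ by Proposition~\ref{HardyProp2} with $v=\bar u$; and the zeroth-order term contributes $\lesssim s^2\|u\|^2$. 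Young's inequality combined with the ellipticity of $A^s$ then yields $\mathrm{Re}\langle(L_\Psi+Cs^2)u,u\rangle\ge\tfrac{3\lambda_0}{4}\|\nabla u\|_{L^2}^2+\tfrac{C}{2}s^2\|u\|_{L^2}^2$. For $|\mathrm{Im}\langle L_\Psi u,u\rangle|$, Proposition~\ref{HardyProp1} bounds $|\mathrm{Im}\langle \mathscr{L}u,u\rangle|$ by $\|\nabla u\|^2$, while the $A^s$ cross-terms (which live purely in the imaginary part) are controlled by $s\|u\|\|\nabla u\|\lesssim\|\nabla u\|^2+s^2\|u\|^2$. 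Dividing produces a ratio bounded independently of $s$, which is the decisive improvement over the proof of Proposition~\ref{nr_tPsid_Prop}.

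Once uniform sectoriality is in hand, the Kato representation argument of Section~\ref{operators def subsection} applied to $L_\Psi+Cs^2$ provides a contraction analytic semigroup on $\Sigma_{\pi/2-\omega_0}$, whence $\|e^{-zL_\Psi}\|_{L^2\to L^2}\le e^{Cs^2|z|}$ for $z\in\Sigma_\mu$ and $\mu<\pi/2-\omega_0$. The standard sectorial estimates $\|zTe^{-zT}\|\lesssim 1$ and $\|\nabla u\|^2\lesssim\mathrm{Re}(Tu,u)$ applied to $T=L_\Psi+Cs^2$ upgrade this to $\|\sqrt{z}\,\nabla e^{-zL_\Psi}\|_{L^2\to L^2}\le C e^{Cs^2|z|}$ and $\|zL_\Psi e^{-zL_\Psi}\|_{L^2\to L^2}\le C(1+s^2|z|)e^{Cs^2|z|}$, with $C$ independent of $s$. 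Given closed $E,F\subset\mathbb{R}^n$ with $d=d(E,F)$, take $\Psi(x)=-s\min(d(x,E),d)$, which is bounded Lipschitz with $\|\nabla\Psi\|_{L^\infty}\le s$, vanishes on $E$, and is $\le-sd$ on $F$. The intertwining $e^{-zL}g=e^\Psi e^{-zL_\Psi}(e^{-\Psi}g)$ then yields, for $h\in L^2$ supported in $E$,
\[\|e^{-zL}h\|_{L^2(F)}\le e^{-sd}\|e^{-zL_\Psi}\|_{L^2\to L^2}\|h\|_{L^2}\le e^{-sd+Cs^2|z|}\|h\|_{L^2},\]
and optimizing at $s=d/(2C|z|)$ produces the Gaussian bound $e^{-\alpha d^2/|z|}\|h\|_{L^2}$. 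The families $\sqrt{z}\,\nabla e^{-zL}$ and $zLe^{-zL}$ are handled by the same intertwining and the corresponding bounds for $L_\Psi$; in the gradient case, the differentiated identity $\nabla e^{-zL}g=(\nabla\Psi)e^{-zL}g+e^\Psi\nabla e^{-zL_\Psi}(e^{-\Psi}g)$ produces an extra $\nabla\Psi$ factor which is absorbed using the elementary bound $xe^{-\alpha x^2}\le C e^{-\alpha x^2/2}$.

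The main obstacle is the uniform-in-$s$ sectoriality in the first step. Proposition~\ref{nr_tPsid_Prop} enforces the \emph{linear} constraint $\|\nabla\Psi\|_{L^\infty}\lesssim\delta$ and gives only $\|e^{-tL_\Psi}\|\lesssim e^{\delta t}\lesssim e^{Cst}$, which upon optimization yields merely exponential ($e^{-cd}$) decay, not Gaussian. Replacing the linear shift by a quadratic one forces us to absorb the BMO-induced cross-term $\int A^a\nabla|u|^2\cdot\nabla\Psi$ into the elliptic and zeroth-order dissipation, and this absorption depends crucially on the $\mathcal{H}^1$ estimate of Proposition~\ref{HardyProp2}: without it, this antisymmetric term could not be bounded by $s\|u\|\|\nabla u\|$ with a constant depending only on $\Lambda_0$ and $n$.
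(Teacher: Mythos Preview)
Your proposal is correct and follows essentially the same Davies-type argument as the paper: a quadratic shift $L_\Psi+Cs^2$ to obtain uniform-in-$s$ sectoriality via Proposition~\ref{HardyProp2}, then optimization in $s$ to upgrade to Gaussian decay. The only cosmetic differences are that the paper uses the regularized weight $\varphi_\varepsilon(x)=\frac{d(x,E)}{1+\varepsilon d(x,E)}$ with a limiting $\varepsilon\to 0$ rather than your truncated $\min(d(x,E),d)$, and extends to complex times by rotating the operator ($e^{-zL}=e^{-t(e^{i\alpha}L)}$) rather than invoking analyticity of the semigroup directly.
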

\begin{proof}
We begin with the case of real times $t>0$. Let $\vp$ be a bounded Lipschitz function with Lipschitz constant 1 and $\rho>0$. Define
$\mathscr{L}_{\rho}=e^{\rho\vp}\mathscr{L}e^{-\rho\vp}$ as follows:  for any $u,v\in W^{1,2}(\Rn)$
\begin{equation*}
    \act{\Lrho u,v}=\act{\mathscr{L}(e^{\rho\vp}u),e^{-\rho\vp}v}.
\end{equation*}
Note that since $\norm{\nabla\rho}_{L^{\infty}}$ is bounded, Proposition \ref{HardyProp2} implies that $\mathscr{L}_{\rho}: W^{1,2}\to \widetilde{W}^{-1,2}$ is bounded.
Define $\Lrho'=\Lrho+c_0\rho^2$, with $c_0$ to be determined. Using Proposition \ref{HardyProp2}, we estimate
\begin{multline*}
    \Re\act{\Lrho'u,u}\\
    \ge\Re\act{\mathscr{L}u,u}-C_{n,\Lambda_0}\norm{\rho\nabla\vp}_{L^{\infty}}\norm{\nabla u}_{L^2}\norm{u}_{L^2}\\
    +\br{c_0\rho^2-\frac{1}{\lambda_0}\norm{\rho\nabla\vp}^2_{L^{\infty}}}\norm{u}_{L^2}^2\\
    \ge\frac{\lambda_0}{2}\norm{\nabla u}_{L^2}^2+\br{c_0\rho^2-C_{n,\lambda_0,\Lambda_0}\rho^2}\norm{u}_{L^2}^2.
\end{multline*}

Here we have used $\norm{\nabla\vp}_{L^\infty}\le1$. Now by choosing $c_0=c_0(n,\lambda_0,\Lambda_0)$ sufficiently large, we have
$$
\Re\act{\Lrho'u,u}\ge\frac{\lambda_0}{2}\norm{\nabla u}_{L^2}^2+\frac{c_0}{2}\rho^2\norm{u}_{L^2}^2.
$$
For $\Im\act{\Lrho'u,u}$, we have
\begin{align*}
    \abs{\Im\act{\Lrho'u,u}}&\le\abs{\Im\act{\mathscr{L}u,u}}+C_{\lambda_0}\norm{\nabla u}_{L^2}^2+C_{\lambda_0}\rho^2\norm{u}_{L^2}^2\\
    &\le C_{n,\lambda_0,\Lambda_0}\norm{\nabla u}_{L^2}^2+C_{\lambda_0}\rho^2\norm{u}_{L^2}^2.
\end{align*}
Therefore, there exists $C_0=C_0(n,\lambda_0,\Lambda_0)>0$, such that
\[
\frac{\abs{\Im\act{\Lrho'u,u}}}{\Re\act{\Lrho'u,u}}\le C_0,
\]
which implies that there exists $0<\omega_0=\omega_0(n,\lambda_0, \Lambda_0)<\frac{\pi}{2}$ such that
\begin{equation}\label{omega_0}
   \forall\, \xi\in\Theta(\Lrho'),  \quad\abs{\xi}\le\omega_0.
\end{equation}
As we argue in Section \ref{operators def subsection}, we can find unique sectorial operators $L_{\rho}'$ and $L_{\rho}$ corresponds to $\Lrho'$ and $\Lrho$, respectively. Then we can prove estimates \eqref{DtetL_L2bd} and \eqref{nablaetL_L2bd} for the operator $L_{\rho}'$. That is, there is $C=C(n,\lambda_0,\Lambda_0)$, such that
\begin{equation*}
    \norm{e^{-tL_{\rho}'}(f)}_{L^2}+\norm{t\Dt e^{-tL_{\rho}'}(f)}_{L^2}+\norm{\sqrt{t}\nabla e^{-tL_{\rho}'}(f)}_{L^2}\le C\norm{f}_{L^2}
\end{equation*}
for all $t>0$. Then by $L_{\rho}'=L_{\rho}+c_0\rho^2I$ and $\Dt e^{-tL_{\rho}'}=-L_{\rho}'e^{-tL_{\rho}'}$, a direct computation shows
\begin{equation*}
    \norm{e^{-tL_{\rho}}(f)}_{L^2}+\norm{t(L_{\rho}+c_0\rho^2I)e^{-tL_{\rho}}(f)}_{L^2}+\norm{\sqrt{t}\nabla e^{-tL_{\rho}}(f)}_{L^2}\le Ce^{c_0\rho^2t}\norm{f}_{L^2}.
\end{equation*}
This implies
\begin{equation}\label{etLrho_L2}
   \norm{e^{-tL_{\rho}}(f)}_{L^2}+\norm{t\Dt e^{-tL_{\rho}}(f)}_{L^2}+\norm{\sqrt{t}\nabla e^{-tL_{\rho}}(f)}_{L^2}\le Ce^{2c_0\rho^2t}\norm{f}_{L^2} \quad\forall\, t>0.
\end{equation}

Let $E$ and $F$ be two closed sets and $f\in L^2$, with compact support contained in $E$. For any $\eps>0$, choose $\vp(x)=\vp_{\eps}(x)=\frac{d(x,E)}{1+\eps d(x,E)}$. With this choice of $\vp_{\eps}$, the operator $\mathscr{L}_{\rho}$ has bounded form.
Observe that there exists $R_0>1$ sufficiently large, such that for any $R\ge R_0$,
\begin{equation*}
    d(E,F)\le d(E,F\cap B_R(0))\le 2d(E,F).
\end{equation*}
Also, for any fixed $R\ge R_0$, there exists $\eps_0=\eps_0(n,R,E)>0$, such that for any $0<\eps\le\eps_0$, $\eps d(x,E)\le\frac{1}{2}$ for all $x\in F\cap B_R(0)$, and thus
\begin{equation}\label{vp_eps ge}
    \vp_{\eps}(x)=\frac{d(x,E)}{1+\eps d(x,E)}\ge \frac{2}{3}d(x,E), \quad\forall\, x\in F\cap B_R(0).
\end{equation}
With these observations and \eqref{etLrho_L2}, the argument of the $L^2$ off-diagonal estimates for $(e^{-tL})_{t>0}$, $(t\Dt e^{-tL})_{t>0}$ and $(\sqrt{t}\nabla e^{-tL})_{t>0}$ follows from \cite{auscher2007necessary} Proposition 3.1.

To extend to complex times, consider the operator $e^{i\alpha}\mathscr{L}$, which has coefficients $e^{i\alpha}A(x)$. Note that if $\xi$ is in the numerical range of $e^{i\alpha}\mathscr{L}$, then $\abs{\arg\xi}\le\theta_0+\abs{\alpha}$, where the angle $\theta_0$ is as in \eqref{nr_t_Prop}. Therefore, in light of \eqref{omega_0}, the argument above applies to $e^{i\alpha}\mathscr{L}$ as long as $\abs{\alpha}<\frac{\pi}{2}-\omega_0$. Observe that when $z=te^{i\alpha}$, $e^{-zL}=e^{-t(e^{i\alpha}L)}$. From this the desired estimates follow.
 \end{proof}

\begin{re}
The same argument applies to the adjoint operator $L^*$. Therefore, for all $\mu\in(0,\frac{\pi}{2}-\omega_0)$, the families $(e^{-zL^*})_{z\in\Sigma_{\mu}}$, $(zL^*e^{-zL^*})_{z\in\Sigma_{\mu}}$, and $(\sqrt{z}\nabla e^{-zL^*})_{z\in\Sigma_{\mu}}$ satisfy $L^2$ off-diagonal estimates.
\end{re}

Using the $L^2$ off-diagonal estimates, we can define the action of the semigroup on $L^{\infty}$ and on Lipschitz function in the $L_{\loc}^2$ sense.

\begin{lem}\label{etL_Linfty_lemDefn}
Let $f\in L^{\infty}(\Rn)$. Then for any $x_0\in\Rn$,  $\underset{R\to\infty}{\lim}e^{-tL}(f\1_{B_R(x_0)})$ exists in $L_{\loc}^2(\Rn)$ and the limit does not depend on $x_0$. We define the limit to be $e^{-tL}f$.
\end{lem}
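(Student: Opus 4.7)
The plan is to exploit the $L^2$ off-diagonal (Gaffney-type) estimates for $(e^{-tL})_{t>0}$ established in Proposition \ref{L2off-diag Prop}, which give Gaussian decay in terms of the distance between the support of the input and the set where the output is measured. This is exactly the right tool because $f\1_{B_R(x_0)}$ already lies in $L^2$, so $e^{-tL}$ acts on it in the usual sense, and only the localization in $R$ needs to be controlled.

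First I will show the Cauchy property in $L^2_{\loc}$. Fix a compact set $K\subset\Rn$ and $x_0\in\Rn$, and take $R_1<R_2$ large enough that $K\subset B_{R_1/2}(x_0)$. Then
\[
e^{-tL}\bigl(f\1_{B_{R_2}(x_0)}\bigr)-e^{-tL}\bigl(f\1_{B_{R_1}(x_0)}\bigr)=e^{-tL}\bigl(f\1_{B_{R_2}(x_0)\setminus B_{R_1}(x_0)}\bigr).
\]
To avoid a bad $R_2^{n/2}$ factor coming from crude volume estimates, I decompose the shell dyadically as $B_{R_2}(x_0)\setminus B_{R_1}(x_0)=\bigcup_{k\ge 0}A_k$, with $A_k=B_{2^{k+1}R_1}(x_0)\setminus B_{2^kR_1}(x_0)$. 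Each $A_k$ has $|A_k|\lesssim (2^kR_1)^n$ and $d(A_k,K)\gtrsim 2^kR_1$. Applying the $L^2$ off-diagonal estimate to each piece gives
\[
\norm{e^{-tL}(f\1_{A_k})}_{L^2(K)}\le C e^{-\alpha\, 4^kR_1^2/t}\norm{f}_{L^\infty}(2^kR_1)^{n/2},
\]
and the Gaussian decay in $k$ dominates the polynomial factor, so summing and using that the first term already carries $e^{-\alpha R_1^2/t}$ yields a bound that tends to $0$ as $R_1\to\infty$, uniformly in $R_2$. This shows that $\{e^{-tL}(f\1_{B_R(x_0)})\}_R$ is Cauchy in $L^2(K)$; since $K$ was arbitrary, the $L^2_{\loc}$ limit exists.

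Next I will prove independence of the basepoint. Given $x_0'\in\Rn$ with $a=\abs{x_0-x_0'}$, the symmetric difference $B_R(x_0)\triangle B_R(x_0')$ is contained in an annular region of measure $\lesssim aR^{n-1}$, and for $R$ large it has distance $\gtrsim R$ from $K$. Running the same dyadic decomposition on this annular region and applying the $L^2$ off-diagonal estimates once more, the difference
\[
e^{-tL}\bigl(f\1_{B_R(x_0)}\bigr)-e^{-tL}\bigl(f\1_{B_R(x_0')}\bigr)=e^{-tL}\bigl(f\1_{B_R(x_0)\setminus B_R(x_0')}\bigr)-e^{-tL}\bigl(f\1_{B_R(x_0')\setminus B_R(x_0)}\bigr)
\]
tends to $0$ in $L^2(K)$ as $R\to\infty$. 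Passing to the limit gives the required independence, so the definition $e^{-tL}f\doteq\lim_{R\to\infty}e^{-tL}(f\1_{B_R(x_0)})$ is unambiguous.

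The only real technical point is bookkeeping the dyadic sum so that the Gaussian factor $e^{-\alpha(2^kR_1)^2/t}$ absorbs both the polynomial growth $(2^kR_1)^{n/2}$ and the number of nonempty shells intersecting $B_{R_2}(x_0)$; this is routine, and no other subtleties arise because $f\1_{B_R(x_0)}\in L^2$ makes every application of $e^{-tL}$ unambiguous at the $L^2$ level. No use is made of kernel estimates here, only the operator-level off-diagonal bound.
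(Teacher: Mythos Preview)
Your proposal is correct and follows essentially the same approach as the paper's proof: dyadic decomposition of the annulus $B_{R_2}(x_0)\setminus B_{R_1}(x_0)$ combined with the $L^2$ off-diagonal estimates for $(e^{-tL})_{t>0}$ from Proposition~\ref{L2off-diag Prop}, then a similar off-diagonal argument for the symmetric difference $B_R(x_0)\triangle B_R(x_0')$ to get independence of the basepoint. The only cosmetic difference is that for the independence step the paper does not bother with a dyadic decomposition, since the symmetric difference sits in a single annulus $B_{2R}(0)\setminus B_{R/2}(0)$ of volume $\lesssim R^n$ at distance $\gtrsim R$ from $K$, and a single application of the off-diagonal bound already gives $R^{n/2}e^{-cR^2/t}\|f\|_{L^\infty}\to 0$.
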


\begin{proof}
We first fix any $x_0\in\Rn$. Fix any $R_0>1$, and let $R_2>R_1>8R_0$. Then there exists $l\in\mathbb N$ such that $2^l R_1<R_2\le 2^{l+1}R_1$. We write
\begin{align*}
    &\quad\abs{e^{-tL}(f\1_{B_{R_2}(x_0)})-e^{-tL}(f\1_{B_{R_1}(x_0)})}=\abs{e^{-tL}\br{f\1_{B_{R_2}(x_0)}-f\1_{B_{R_1}(x_0)}}}\\
    &\le \abs{e^{-tL}f\br{\1_{B_{R_2}(x_0)}-\1_{B_{2^lR_1}(x_0)}}}
    +\sum_{k=1}^l\abs{e^{-tL}f\br{\1_{B_{2^kR_1}(x_0)}-\1_{B_{2^{k-1}R_1}(x_0)}}}.
\end{align*}
Observe that $d(B_{R_0}(x_0),B_{2^kR_1}(x_0)\setminus B_{2^{k-1}R_1}(x_0))\gtrsim 2^{k-1}R_1$ for $k=1,2,\dots, l$. Then by $L^2$ off-diagonal estimates for $\br{e^{-tL}}_{t>0}$, we have
\begin{align*}
    \Big|\Big|e^{-tL}&(f\1_{B_{R_2}(x_0)})-e^{-tL}(f\1_{B_{R_1}(x_0)})\Big|\Big|_{L^2(B_{R_0}(x_0))}\\
    &\lesssim e^{-\frac{c(2^lR_1)^2}{t}}\norm{f}_{L^2(B_{R_2}(x_0)\setminus B_{2^lR_1}(x_0))}
    +\sum_{k=1}^l e^{-\frac{c(2^{k-1}R_1)^2}{t}}\norm{f}_{L^2(B_{2^kR_1}(x_0))}\\
    &\lesssim\sum_{k=1}^{l+1}e^{-\frac{c(2^{k-1}R_1)^2}{t}}(2^kR_1)^{n/2}\norm{f}_{L^{\infty}}\lesssim t^{n/2}R_1^{-n/2}\norm{f}_{L^{\infty}},
\end{align*}
where the implicit constant depends only on $\lambda_0$, $\Lambda_0$ and $n$. This shows that $e^{-tL}(f\1_{B_{R}(x_0)})$ is a Cauchy sequence in $L_{\loc}^2(\Rn)$, when $f\in L^{\infty}$.

We now show the limit is independent of the choice of $x_0$. Let $x_1\in\Rn$ be a different point than $x_0$. Then for $R$ sufficiently large, the symmetric difference $B_{R}(x_0)\Delta B_{R}(x_1)$ is contained in $B_{2R}(0)\setminus B_{\frac{R}{2}}(0)$. So by the $L^2$ off-diagonal estimates for $e^{-tL}$, and that $d(B_{R}(x_0)\Delta B_{R}(x_1),B_{\frac{R}{4}}(0))\gtrsim R$, we have
\begin{align*}
    \norm{e^{-tL}f\1_{B_R(x_0)}-e^{-tL}f\1_{B_R(x_1)}}_{L^2(B_{\frac{R}{4}}(0))}&\lesssim e^{-\frac{cR^2}{t}}\norm{f}_{L^2(B_R(x_0)\Delta B_R(x_1))}\\
    &\lesssim R^{\frac{n}{2}}e^{-\frac{cR^2}{t}}\norm{f}_{L^{\infty}}.
\end{align*}
This implies that $\lim_{R\to\infty}e^{-tL}f\1_{B_R(x_0)}=\lim_{R\to\infty}e^{-tL}f\1_{B_R(x_1)}$ in the $L_{\loc}^2$ sense.
 \end{proof}

\begin{re}
The limit $\lim_{R\to\infty}e^{-tL}(f\1_{B_R(x_0)})$ actually exists in $W_{\loc}^{1,2}(\Rn)$. One can show this by using the $L^2$ off-diagonal estimates for $\br{\sqrt{t}\nabla e^{-tL}f}_{t>0}$ instead of the $L^2$ off-diagonal estimates for $\br{e^{-tL}}_{t>0}$ in the argument above.
\end{re}

Similarly, we can define $e^{-tL}f$ for $f$ Lipschitz.
\begin{lem}\label{etL_Lipshcitz_lemDefn}
Let $f$ be a Lipschitz function. Then for any $x_0\in\Rn$, the limit $\lim_{R\to\infty}e^{-tL}(f\1_{B_R(x_0)})$ exists in $L_{\loc}^2(\Rn)$ and not depend on $x_0$. We define the limit to be $e^{-tL}f$.
\end{lem}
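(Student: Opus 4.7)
The plan is to reduce the Lipschitz case to the bounded case plus a ``growing'' remainder, and then show that the $L^2$ off-diagonal estimates control the latter because Gaussian decay crushes polynomial growth.

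First I would decompose $f = f(x_0) + g_{x_0}$ where $g_{x_0}(x) = f(x) - f(x_0)$. The constant part $f(x_0)$ lies in $L^\infty(\Rn)$, so Lemma \ref{etL_Linfty_lemDefn} already gives that $e^{-tL}(f(x_0)\1_{B_R(x_0)})$ converges in $L^2_{\loc}(\Rn)$ as $R\to\infty$. For the remainder $g_{x_0}$, which is Lipschitz with $g_{x_0}(x_0)=0$, I have the pointwise bound $|g_{x_0}(x)|\le \mathrm{Lip}(f)\,|x-x_0|$. So on $B_{2^kR_1}(x_0)$ I get
\[
\|g_{x_0}\|_{L^2(B_{2^kR_1}(x_0))} \lesssim \mathrm{Lip}(f)\,(2^kR_1)^{n/2+1}.
\]

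Next I would rerun the dyadic argument from the proof of Lemma \ref{etL_Linfty_lemDefn}, verbatim, with $f$ replaced by $g_{x_0}$. For $R_2>R_1>8R_0$ and $l$ with $2^lR_1<R_2\le 2^{l+1}R_1$, the $L^2$ off-diagonal estimates for $(e^{-tL})_{t>0}$ and $d(B_{R_0}(x_0),B_{2^kR_1}(x_0)\setminus B_{2^{k-1}R_1}(x_0))\gtrsim 2^{k-1}R_1$ give
\[
\|e^{-tL}(g_{x_0}\1_{B_{R_2}(x_0)})-e^{-tL}(g_{x_0}\1_{B_{R_1}(x_0)})\|_{L^2(B_{R_0}(x_0))} \lesssim \mathrm{Lip}(f)\sum_{k=1}^{l+1} e^{-c(2^{k-1}R_1)^2/t}(2^kR_1)^{n/2+1},
\]
and the exponential factor dominates the polynomial $(2^kR_1)^{n/2+1}$, giving a bound $\lesssim_t \mathrm{Lip}(f)\,R_1^{-N}$ for any $N$, hence a Cauchy sequence in $L^2_{\loc}(\Rn)$. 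Adding back the contribution of the constant part yields convergence of $e^{-tL}(f\1_{B_R(x_0)})$ in $L^2_{\loc}(\Rn)$.

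For independence of the base point, let $x_0,x_1\in\Rn$ and for large $R$ observe that $B_R(x_0)\Delta B_R(x_1)$ is contained in a spherical shell of width $O(|x_0-x_1|)$ near radius $R$, and sits in $B_{2R}(0)\setminus B_{R/2}(0)$ once $R\gg |x_0-x_1|$. On this shell $|f(x)|\le |f(x_0)|+\mathrm{Lip}(f)\,|x-x_0|\lesssim \mathrm{Lip}(f)\,R+|f(x_0)|$, so $\|f\|_{L^2(B_R(x_0)\Delta B_R(x_1))}\lesssim R^{(n+1)/2}$ up to constants depending on $f$ and $|x_0-x_1|$. Using $d(B_R(x_0)\Delta B_R(x_1),B_{R/4}(0))\gtrsim R$ and the $L^2$ off-diagonal estimates, the difference $\|e^{-tL}(f\1_{B_R(x_0)})-e^{-tL}(f\1_{B_R(x_1)})\|_{L^2(B_{R/4}(0))}$ is bounded by $R^{(n+1)/2}e^{-cR^2/t}\to 0$, which shows both limits agree in $L^2_{\loc}(\Rn)$.

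The main (very mild) obstacle is merely bookkeeping the polynomial growth of $f$ on large balls: unlike the $L^\infty$ case, the norm $\|f\|_{L^2(B_R)}$ grows like $R^{n/2+1}$ rather than $R^{n/2}$, but in both the Cauchy and independence steps this is killed by the $e^{-cR^2/t}$ from the off-diagonal estimates, so no essentially new ideas beyond those of Lemma \ref{etL_Linfty_lemDefn} are required.
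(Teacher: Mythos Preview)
Your proposal is correct and follows essentially the same approach as the paper: both split $f=f(x_0)+(f-f(x_0))$, invoke Lemma~\ref{etL_Linfty_lemDefn} for the constant piece, and handle the linearly growing remainder by the same dyadic off-diagonal argument with the bound $\|f-f(x_0)\|_{L^2(B_{2^kR_1}(x_0))}\lesssim \mathrm{Lip}(f)(2^kR_1)^{n/2+1}$. The only cosmetic difference is that the paper records the explicit rate $t^{(n+1)/2}R_1^{-n/2}$ for the Cauchy tail whereas you write $\lesssim_t R_1^{-N}$, and the paper simply defers independence of $x_0$ to the proof of Lemma~\ref{etL_Linfty_lemDefn} while you spell it out.
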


\begin{proof}
Fix any $x_0\in\Rn$, any $R_0>1$, and let $R_2>R_1>8R_0$. Then there exists $l\in\mathbb N$ such that $2^l R_1<R_2\le 2^{l+1}R_1$. We write
\begin{multline*}
    \abs{e^{-tL}(f\1_{B_{R_2}(x_0)})-e^{-tL}(f\1_{B_{R_1}(x_0)})}\\
    \le\abs{e^{-tL}\br{(f-f(x_0))(\1_{B_{R_2}(x_0)}-\1_{B_{R_1}(x_0)})}}\\+\abs{e^{-tL}\br{f(x_0)(\1_{B_{R_2}(x_0)}-\1_{B_{R_1}(x_0)})}}
    =: I_1+I_2
\end{multline*}
Since $f(x_0)$ is a bounded constant function, the proof of Lemma \ref{etL_Linfty_lemDefn} applies to $I_2$, and we have
\begin{equation}\label{etL_Lip_I2}
  \norm{I_2}_{L^2(B_{R_0}(x_0))}\lesssim t^{n/2}R_1^{-n/2}\abs{f(x_0)}.
\end{equation}
For $I_1$, we have
\begin{multline*}
    \norm{I_1}_{L^2(B_{R_0}(x_0))}
    \lesssim e^{-c\frac{(2^lR_1)^2}{t}}\norm{(f-f(x_0))\1_{B_{R_2}(x_0)\setminus B_{2^lR_1}(x_0)}}_{L^2}\\
    +\sum_{k=1}^le^{-c\frac{(2^{k-1}R_1)^2}{t}}\norm{(f-f(x_0))\1_{B_{2^kR_1}(x_0)\setminus B_{2^{k-1}R_1}(x_0)}}_{L^2}\\
    \lesssim\sum_{k=1}^{l+1}e^{-c\frac{(2^{k-1}R_1)^2}{t}}(2^kR_1)^{1+\frac{n}{2}}\norm{\nabla f}_{L^{\infty}(\Rn)}\\
    \lesssim\sum_{k=1}^{\infty}\br{\frac{(2^kR_1)^2}{t}}^{-\frac{n+1}{2}}(2^kR_1)^{1+\frac{n}{2}}\norm{\nabla f}_{L^{\infty}}\\
    \lesssim t^{\frac{n+1}{2}}R_1^{-\frac{n}{2}}\norm{\nabla f}_{L^{\infty}}.
\end{multline*}

This and \eqref{etL_Lip_I2} show that $e^{-tL}(f\1_{B_{R}(x_0)})$ is a Cauchy sequence in $L_{\loc}^2(\Rn)$, when $f$ is Lipschitz. By a similar argument as in the proof of Lemma \ref{etL_Linfty_lemDefn}, one can show the limit is independent of $x_0$.
 \end{proof}

\begin{re}
By applying the $L^2$ off-diagonal estimates for $\br{\sqrt{t}\nabla e^{-tL}f}_{t>0}$ instead of the $L^2$ off-diagonal estimates for $\br{e^{-tL}}_{t>0}$ in the argument above, one can show that as $R\to\infty$, $e^{-tL}(f\1_{B_{R}(x_0)})$ converges in $W_{\loc}^{1,2}(\Rn)$, when $f$ is Lipschitz.
\end{re}

\begin{prop}
The conservation property $e^{-tL}1=1$, for any $t>0$, holds in the sense of $L_{\loc}^2$.
\end{prop}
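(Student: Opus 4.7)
The plan is to prove the conservation property by duality, reducing it to a mass preservation statement for the adjoint semigroup. Since $e^{-tL}1$ is defined in Lemma~\ref{etL_Linfty_lemDefn} as the $L^2_{\loc}$-limit of $e^{-tL}\br{\1_{B_R(0)}}$ as $R\to\infty$, it suffices to show that for every $\phi\in C_c^\infty(\Rn)$, $\int \phi\cdot e^{-tL}1\,dx=\int \phi\,dx$. The $L^2$ pairing gives $\int \phi\cdot e^{-tL}\1_{B_R(0)}\,dx=\int_{B_R(0)}e^{-tL^*}\phi\,dx$, and the Gaussian bound of Theorem~\ref{AuscherReg_mainthm} applied to $L^*$ (whose coefficients satisfy the same structural hypotheses) yields $e^{-tL^*}\phi\in L^1(\Rn)$, so the right-hand side tends to $\int e^{-tL^*}\phi\,dx$ as $R\to\infty$. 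The problem therefore reduces to the mass preservation
\begin{equation*}
\int_{\Rn} e^{-tL^*}\phi\,dx=\int_{\Rn}\phi\,dx,\qquad \phi\in C_c^\infty(\Rn),\ t>0.
\end{equation*}

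To establish this, set $u(s,\cdot)=e^{-sL^*}\phi$. By the analogue of Theorem~\ref{De-tL_L2est_thm} for $L^*$, $u\in C([0,t],L^2)\cap C((0,t],W^{1,2})$ and satisfies $\Dt u=-L^*u$ in the sesquilinear sense. Let $\eta_R\in C_c^\infty(\Rn)$ be a real-valued cutoff with $\eta_R\equiv 1$ on $B_R(0)$, $\supp\eta_R\subset B_{2R}(0)$, and $\abs{\nabla\eta_R}\lesssim R^{-1}$. Pairing the equation with $\eta_R$ and integrating in $s$ from $0$ to $t$ yields
\begin{equation*}
\int_{\Rn} u(t)\eta_R\,dx-\int_{\Rn}\phi\,\eta_R\,dx=-\int_0^t\int_{\Rn} A^T\nabla u\cdot\nabla\eta_R\,dx\,ds.
\end{equation*}
The Gaussian decay of $u(t)$ gives $\int u(t)\eta_R\to\int u(t)\,dx$ by dominated convergence; $\int\phi\,\eta_R=\int\phi$ for $R$ large. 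Hence the statement reduces to showing that the right-hand ``flux'' integral vanishes as $R\to\infty$.

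The main difficulty lies in the antisymmetric part of $A^T$. Splitting $A^T=A^s-A^a$: for the symmetric piece, assuming $\supp\phi\subset B_{R_0}(0)$ with $R_0$ fixed, the $L^2$ off-diagonal estimate of Proposition~\ref{L2off-diag Prop} for $\sqrt{s}\nabla e^{-sL^*}$ applied with source $B_{R_0}(0)$ and target $B_{2R}(0)\setminus B_R(0)$ gives $\norm{\nabla u(s)}_{L^2(B_{2R}\setminus B_R)}\lesssim s^{-1/2}e^{-cR^2/s}\norm{\phi}_{L^2}$, which beats the polynomial factor $\norm{\nabla\eta_R}_{L^2}\lesssim R^{n/2-1}$ as $R\to\infty$. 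For the antisymmetric piece, since $A^a$ lies only in $BMO$ we must exploit cancellation. Introduce a second cutoff $\chi\in C_c^\infty(\Rn)$ with $\chi\equiv 1$ on $B_{3R}(0)\setminus B_{R/2}(0)$ and $\supp\chi\subset B_{4R}(0)\setminus B_{R/4}(0)$, chosen so that $\supp\nabla\chi$ is disjoint from $\supp\nabla\eta_R=B_{2R}(0)\setminus B_R(0)$. Then $\int A^a\nabla u\cdot\nabla\eta_R\,dx=\int A^a\nabla(u\chi)\cdot\nabla\eta_R\,dx$, and by the antisymmetry of $A^a$, Proposition~\ref{HardyProp1}, and $\mathcal{H}^1$--$BMO$ duality,
\begin{equation*}
\abs{\int A^a\nabla(u\chi)\cdot\nabla\eta_R\,dx}\lesssim\norm{A^a}_{BMO}\norm{\nabla(u\chi)}_{L^2}\norm{\nabla\eta_R}_{L^2}.
\end{equation*}
Since $\supp\chi\subset\set{\abs{x}>R/4}$, the $L^2$ off-diagonal estimates of Proposition~\ref{L2off-diag Prop} for both $e^{-sL^*}$ and $\sqrt{s}\nabla e^{-sL^*}$ show $\norm{\nabla(u\chi)}_{L^2}$ is exponentially small in $R$, so the antisymmetric contribution vanishes in the limit. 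Once the mass preservation is established, the reduction in the first paragraph gives $\int\phi\cdot e^{-tL}1\,dx=\int\phi\,dx$ for all $\phi\in C_c^\infty(\Rn)$, whence $e^{-tL}1=1$ almost everywhere, completing the proof.
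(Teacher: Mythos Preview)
Your proof is correct and follows essentially the same approach as the paper: reduce by duality to mass preservation for $e^{-tL^*}\phi$, pair the equation with a cutoff $\eta_R$, handle the symmetric part directly via the $L^2$ off-diagonal estimate for $\sqrt{s}\nabla e^{-sL^*}$, and handle the antisymmetric BMO part by inserting a second cutoff that is identically $1$ on $\supp\nabla\eta_R$, then invoking Proposition~\ref{HardyProp1}. The only organizational differences are that you integrate in $s$ from $0$ to $t$ (using integrability of $s^{-1/2}e^{-cR^2/s}$ near $s=0$) whereas the paper shows the $t$-derivative of $\int e^{-tL}1\,\overline{\Phi}$ vanishes and then evaluates at $t\to 0^+$ via strong continuity, and that you cite the Gaussian kernel bound (Theorem~\ref{AuscherReg_mainthm}) for $e^{-tL^*}\phi\in L^1$ whereas the paper derives it from the off-diagonal estimates; these are equivalent and both are available.
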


\begin{proof}
Let $\Phi$ be an $L^2(\Rn)$ function with compact support, and suppose the support of $\Phi$ is contained in a cube $Q$ with $l(Q)=r_0$. We first show $e^{-tL}\Phi\in L^1$.

Let $Q_0=2Q$, and decompose $\Rn$ into a union of nonoverlapping cubes with same size. That is, $\Rn=\bigsqcup_{k=0}^{\infty}Q_k$, with $\abs{Q_k}=\abs{Q_0}$. Let $x_Q$ denote the center of $Q$ and $Q_0$, and let $x_k$ denote the center of $Q_k$, for $k=1,2,\dots$. Define
\[
\mathcal{F}_l=\set{Q_k: 2lr_0\le\abs{x_k-x_Q}<2(l+1)r_0}, \quad\text{for }l=1,2,\dots.
\]
By the $L^2$ boundedness of the semigroup, we have
\begin{equation}\label{etL_L1_Q_0}
    \int_{Q_0}\abs{e^{-tL}\Phi}\le\abs{Q_0}^{1/2}\norm{e^{-tL}\Phi}_{L^2}\lesssim\abs{Q_0}^{1/2}\norm{\Phi}_{L^2}.
\end{equation}

For any $Q_k\in\mathcal{F}_l$, we use the $L^2$ off-diagonal estimates for $(e^{-tL})_{t>0}$ to obtain
\[
\int_{Q_k}\abs{e^{-tL}\Phi}\le\abs{Q_k}^{1/2}\norm{e^{-tL}\Phi}_{L^2(Q_k)}\lesssim \abs{Q_0}^{1/2}e^{-\frac{c(lr_0)^2}{t}}\norm{\Phi}_{L^2}.
\]
Since $\abs{\mathcal{F}_l}\approx l^n$, we have
\begin{align*}
   \sum_{Q_k\in\mathcal{F}_l}\int_{Q_k}\abs{e^{-tL}\Phi}\lesssim l^n \abs{Q_0}^{1/2}e^{-\frac{c(lr_0)^2}{t}}\norm{\Phi}_{L^2}
   \lesssim l^{-2}r_0^{-\frac{n}{2}-2}t^{\frac{n+2}{2}}\norm{\Phi}_{L^2}.
\end{align*}
Summing in $l$ yields
\begin{equation}\label{etL_L1_Q_0cpl}
    \int_{\Rn\setminus Q_0}\abs{e^{-tL}\Phi}\lesssim\sum_{l=1}^{\infty}l^{-2}r_0^{-\frac{n}{2}-2}t^{\frac{n+2}{2}}\norm{\Phi}_{L^2}
    \lesssim r_0^{-\frac{n}{2}-2}t^{\frac{n+2}{2}}\norm{\Phi}_{L^2}.
\end{equation}
Then $e^{-tL}\Phi\in L^1$ follows from \eqref{etL_L1_Q_0} and \eqref{etL_L1_Q_0cpl}. The argument also applies to $e^{-tL^*}$ and so $e^{-tL^*}\Phi\in L^1$. Therefore, by Lemma \ref{etL_Linfty_lemDefn}, we have
\begin{align}\label{etL1_id}
    \int_{\Rn}e^{-tL}1\overline{\Phi}=\lim_{R\to\infty}\int_{\Rn}e^{-tL}(\1_{B_R})\overline{\Phi}
    =\lim_{R\to\infty}\int_{\Rn}\1_{B_R}\overline{e^{-tL^*}\Phi}=\int_{\Rn}\overline{e^{-tL^*}\Phi}.
\end{align}
Here and subsequently, $B_R$ is the ball centered at the origin with radius $R$. We shall first show $\int_{\Rn}e^{-tL}1\overline{\Phi}$ does not depend on $t>0$ by proving
\begin{equation}\label{dint_etL1=0}
    \frac{d}{dt}\int_{\Rn}e^{-tL}1\overline{\Phi}=0,
\end{equation}
and then show
\begin{equation}\label{etL1=1 L2loc}
    \int_{\Rn}e^{-tL}1\overline{\Phi}=\int_{\Rn}\overline{\Phi}.
\end{equation}
This implies that $e^{-tL}1=1$ in the sense of $L^2_{\loc}$.

Observe that we can define $\Dt e^{-tL}f=\lim_{R\to\infty}\Dt e^{-tL}(f\1_{B_R(x_0)})$ in $L_{\loc}^2$, for $f\in L^{\infty}$, for any $x_0\in\Rn$. This is because the argument in the proof of Lemma \ref{etL_Linfty_lemDefn} applies to $\Dt e^{-tL}$ if one uses the $L^2$ off-diagonal estimates for $\br{tLe^{-tL}}_{t>0}$ instead of that for $\br{e^{-tL}}_{t>0}$. In particular, we have $\Dt e^{-tL}1=\lim_{R\to\infty}\Dt e^{-tL}\1_{B_R}$. Also, using the $L^2$ off-diagonal estimates for $\br{tL^*e^{-tL^*}}_{t>0}$, one can show $\Dt e^{-tL^*}\Phi\in L^1$. Therefore,
\[
\int_{\Rn}\Dt e^{-tL}1\overline{\Phi}=\lim_{R\to\infty}\int\Dt e^{-tL}\1_{B_R}\overline{\Phi}
=\lim_{R\to\infty}\int \1_{B_R}\overline{\Dt e^{-tL^*}\Phi}=\int\overline{\Dt e^{-tL^*}\Phi}.
\]

Let $\eta\in C_0^{\infty}(\Rn)$ with $\eta=1$ in $B_1$, and $\supp\eta\subset B_2$. Let $\eta_R(x)=\eta\br{\frac{x}{R}}$ for $R>0$.
Then
\begin{equation}\label{dtetL_id}
  \frac{d}{dt}\int_{\Rn}e^{-tL}1\overline{\Phi}=\int_{\Rn}\Dt e^{-tL}1\overline{\Phi}=\int_{\Rn}\eta_R\overline{\Dt e^{-tL^*}\Phi}+\int_{\Rn}(1-\eta_R)\overline{\Dt e^{-tL^*}\Phi}.
\end{equation}
Since $\Dt e^{-tL^*}\Phi\in L^1$, the last term goes to 0 as $R\to\infty$. We write
\begin{align*}
   \int_{\Rn}\eta_R\overline{\Dt e^{-tL^*}\Phi}&=-\int_{\Rn}\eta_R\overline{L^* e^{-tL^*}\Phi}
    =\int_{\Rn}A\nabla\eta_R\cdot\overline{\nabla e^{-tL^*}\Phi}\\
    &=\int_{\Rn}A^s\nabla\eta_R\cdot\overline{\nabla e^{-tL^*}\Phi}
    +\int_{\Rn}A^a\nabla\eta_R\cdot\overline{\nabla\br{\zeta_R e^{-tL^*}\Phi}}.
\end{align*}
Here, $\zeta_R(x)=\zeta\br{\frac{x}{R}}$, where $\zeta\in C_0^{\infty}(\Rn)$ with $\zeta=1$ in $B_2\setminus B_1$ and $\supp\zeta\subset B_{5/2}\setminus B_{1/2}$. Note that $\zeta_R=1$ in the support of $\nabla\eta_R$.

Choose $R$ to be sufficiently large so that $Q\subset B_{\frac{R}{8}}$.
We estimate
\begin{multline*}
    \abs{\int_{\Rn}A^s\nabla\eta_R\cdot\overline{\nabla e^{-tL^*}\Phi}}\le\frac{1}{\lambda_0}\norm{\nabla\eta_R}_{L^2}\norm{\nabla e^{-tL^*}\Phi}_{L^2(B_{2R}\setminus B_R)}\\
    \lesssim R^{\frac{n}{2}-1}t^{-\frac{1}{2}}e^{-\frac{cR^2}{t}}\norm{\Phi}_{L^2},
\end{multline*}
where the last inequality follows from the $L^2$ off-diagonal estimates for $\br{\sqrt{t}\nabla e^{-tL^*}}_{t>0}$. By Proposition \ref{HardyProp1}, we have
\[
\abs{\int_{\Rn}A^a\nabla\eta_R\cdot\overline{\nabla\br{\zeta_R e^{-tL^*}\Phi}}}\le C\Lambda_0\norm{\nabla\eta_R}_{L^2}\norm{\nabla\br{\zeta_R e^{-tL^*}\Phi}}_{L^2}.
\]
Using the support property of $\zeta_R$ and $\nabla\zeta_R$, we have
\begin{align*}
   \norm{\nabla\br{\zeta_R e^{-tL^*}\Phi}}_{L^2}&\lesssim\norm{\nabla(e^{-tL^*}\Phi)}_{L^2(B_{\frac{5R}{2}}\setminus B_{\frac{R}{2}})}+R^{-1}\norm{e^{-tL^*}}_{L^2(B_{\frac{5R}{2}}\setminus B_{\frac{R}{2}})}\\
   &\lesssim t^{-\frac{1}{2}}e^{-\frac{cR^2}{t}}\norm{\Phi}_{L^2}+R^{-1}e^{-\frac{cR^2}{t}}\norm{\Phi}_{L^2},
\end{align*}
where the last inequality follows from the $L^2$ off-diagonal estimates for $\br{\sqrt{t}\nabla e^{-tL^*}}_{t>0}$ and $\br{e^{-tL^*}}_{t>0}$. Combining these estimates, we obtain $\int_{\Rn}\eta_R\overline{\Dt e^{-tL^*}\Phi}\to 0$ as $R\to \infty$. So from \eqref{dtetL_id}, the desired result \eqref{dint_etL1=0} follows.

To prove \eqref{etL1=1 L2loc}, we fix $R>0$ sufficiently large so that $2Q\subset B_R$. Write
\[
\int_{\Rn}e^{-tL}1\overline{\Phi}=\int_{\Rn}\eta_R\overline{e^{-tL^*}\Phi}+\int_{\Rn}(1-\eta_R)\overline{e^{-tL^*}\Phi}.
\]
Since $e^{-tL^*}$ is strongly continuous in $L^2$ at $t=0$,
\[
\lim_{t\to 0} \int_{\Rn}\eta_R\overline{e^{-tL^*}\Phi}= \int_{\Rn}\eta_R\overline{\Phi}=\int\overline{\Phi}.
\]

We have $\abs{\int_{\Rn}(1-\eta_R)\overline{e^{-tL^*}\Phi}}\le \int_{\Rn\setminus Q_0}\abs{e^{-tL^*}\Phi}$, where $Q_0$ is constructed in the beginning. Since we can also obtain \eqref{etL_L1_Q_0cpl} for $e^{-tL^*}$, we then have
\[\abs{\int_{\Rn}(1-\eta_R)\overline{e^{-tL^*}\Phi}}\lesssim r_0^{-\frac{n}{2}-2}t^{\frac{n+2}{2}}\norm{\Phi}_{L^2},\]
which goes to $0$ as $t\to0$. Therefore, since we have shown that $\int_{\Rn}e^{-tL}1\overline{\Phi}$ is independent of $t$, we obtain \eqref{etL1=1 L2loc}.
 \end{proof}

\subsection{$L^p$ theory for the semigroup}

We now study the uniform boundedness of the semigroup $(e^{-tL})_{t>0}$ and of the family $(\sqrt{t}\nabla e^{-tL})_{t>0}$ on $L^p$ spaces. We begin with a few definitions.

Let $\mathcal{T}=(T_t)_{t>0}$ be a family of uniformly bounded operators on $L^2$.
\begin{defn}
 We say that $\mathcal{T}$ is $L^p-L^q$ bounded for some $p,q\in [1,\infty]$ with $p\le q$ if for some constant $C$, for all $t>0$ and all $h\in L^p\cap L^2$
\[
\norm{T_th}_{L^q}\le Ct^{-\frac{\gamma_{pq}}{2}}\norm{h}_{L^p},
\]
where $\gamma_{pq}=\abs{\frac{n}{q}-\frac{n}{p}}$. We shall use $\gamma_p$ to denote $\gamma_{p2}=\abs{\frac{n}{2}-\frac{n}{p}}$.
\end{defn}

\begin{defn}
We say that $\mathcal{T}$ satisfies $L^p-L^q$ off-diagonal estimates for some $p,q\in [1,\infty]$ with $p\le q$ if for some constants $C$, $c>0$, for all closed sets $E$ and $F$, all $h\in L^p\cap L^2$ with support in $E$ and all $t>0$ we have
\[
\norm{T_th}_{L^q(F)}\le Ct^{-\frac{\gamma_{pq}}{2}}e^{-\frac{cd(E,F)^2}{t}}\norm{h}_{L^p}.
\]

\end{defn}

Note that the uniform $L^p$ boundedness of $(e^{-tL})_{t>0}$ and of $(t\Dt e^{-tL})_{t>0}$ follows from the kernel estimates \eqref{Dtl_Kt_Gaussianbd}.

\begin{lem}
Let $p\ge1$. There is some constant $C=C(n,\lambda_0,\Lambda_0,p)$ such that for all $t>0$ and all $f\in L^p$,
\begin{align}
    \norm{e^{-tL}f}_{L^p}&\le C\norm{f}_{L^p},\label{etL_Lp}\\
    \norm{t\Dt e^{-tL}f}_{L^p}&\le C\norm{f}_{L^p}.\label{tDtetL-Lp}
\end{align}
\end{lem}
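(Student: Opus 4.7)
The plan is to exploit the pointwise Gaussian bounds on the kernels of $e^{-tL}$ and $\partial_t e^{-tL}$ furnished by Theorem \ref{Dtl_Kt_Thm}, and reduce the $L^p$ boundedness to the classical Schur test applied to a Gaussian-majorized integral operator. Applying \eqref{Dtl_Kt_Gaussianbd} with $l=0$ and $l=1$ gives, for all $t>0$ and all $x,y\in\Rn$,
\[
|K_t(x,y)| \le C\, t^{-n/2} e^{-\beta|x-y|^2/t}, \qquad |t\,\partial_t K_t(x,y)| \le C\, t^{-n/2} e^{-\beta|x-y|^2/t}.
\]
Denote the common right-hand side by $G_t(x,y)$; both kernels are pointwise dominated by this single Gaussian.

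Next, a direct change-of-variables computation shows
\[
\sup_{t>0}\sup_{x\in\Rn} \int_{\Rn} G_t(x,y)\,dy \;=\; \sup_{t>0}\sup_{y\in\Rn} \int_{\Rn} G_t(x,y)\,dx \;=\; C_0 < \infty,
\]
with $C_0$ independent of $t$. By Schur's test---equivalently, uniform $L^1 \to L^1$ and $L^\infty \to L^\infty$ boundedness followed by Riesz--Thorin interpolation---the integral operator with kernel $G_t$ is bounded on $L^p(\Rn)$ for every $p\in[1,\infty]$, with operator norm at most $C_0$, uniformly in $t>0$. A fortiori, the integral operators with kernels $K_t(x,y)$ and $t\partial_t K_t(x,y)$ are uniformly bounded on every $L^p$, $1\le p\le\infty$.

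It remains to identify these integral operators with $e^{-tL}$ and $t\partial_t e^{-tL}$ on $L^p$. For $f\in L^p\cap L^2$ with $1\le p<\infty$, the $L^2$ construction together with the integrability of the kernel yields $(e^{-tL}f)(x) = \int K_t(x,y)f(y)\,dy$, and similarly for $t\partial_t e^{-tL}$ using the kernel $t\partial_t K_t(x,y)$; the $L^p$ estimates then extend to all of $L^p$ by density. For the boundary case $p=\infty$, one combines Lemma \ref{etL_Linfty_lemDefn} with the dominated convergence theorem applied to $f\1_{B_R(x_0)}$ as $R\to\infty$ (using $G_t$ as the integrable majorant) to conclude that $e^{-tL}f$ agrees with the kernel integral and satisfies the pointwise bound $C\|f\|_{L^\infty}$.

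No substantive obstacle arises; the only mildly delicate point is the bookkeeping needed to check that the kernel-defined operator agrees with the semigroup-theoretic $e^{-tL}$ (and its $t$-derivative) on the appropriate dense subsets. This is a routine consequence of the $L^2$ construction of $L$ in Section \ref{operators def subsection} combined with the kernel representations established in Theorems \ref{AuscherReg_mainthm} and \ref{Dtl_Kt_Thm}.
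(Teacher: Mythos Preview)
Your proposal is correct and follows essentially the same approach as the paper: both rely on the Gaussian pointwise bounds from Theorem~\ref{Dtl_Kt_Thm} to get uniform $L^1$ control of the kernel in each variable, then deduce the $L^p$ bound. The paper writes out the Schur test explicitly via H\"older's inequality rather than naming it, and does not pause over the identification of the kernel operator with the semigroup, but the substance is identical.
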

\begin{proof}
By \eqref{Dtl_Kt_Gaussianbd}, we have
\begin{align}
    \int_{\Rn}\abs{K_t(x,y)}dx\le C &\qquad \int_{\Rn}\abs{K_t(x,y)}dy\le C,\label{K_t_L1bound}\\
    \int_{\Rn}\abs{\Dt K_t(x,y)}dx\le Ct^{-1} &\qquad \int_{\Rn}\abs{\Dt K_t(x,y)}dy\le Ct^{-1}.\label{DtK_t_L1bound}
\end{align}
Then
\begin{multline*}
    \abs{e^{-tL}f(x)}=\abs{\int_{\Rn}K_t(x,y)f(y)dy}\\
    \le\br{\int_{\Rn}\abs{K_t(x,y)}dy}^{\frac{1}{p'}}\br{\int_{\Rn}\abs{K_t(x,y)}\abs{f(y)}^pdy}^{\frac{1}{p}}.
\end{multline*}
So \eqref{etL_Lp} follows from \eqref{K_t_L1bound}. And \eqref{tDtetL-Lp} follows from \eqref{DtK_t_L1bound} by the same argument.
 \end{proof}

\begin{prop}\leavevmode\label{etL_Lp-L2Prop}
\begin{enumerate}
    \item $(e^{-tL})_{t>0}$ is $L^p-L^2$ bounded for any $1\le p<2$.
    \item $(e^{-tL})_{t>0}$ satisfies the $L^p-L^2$ off-diagonal estimates for any $1<p<2$.
    \item $(e^{-tL})_{t>0}$ is $L^2-L^p$ bounded for $2<p\le\infty$, and satisfies the $L^2-L^p$ off-diagonal estimates for $2<p<\infty$.
\end{enumerate}
\end{prop}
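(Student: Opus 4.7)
The strategy is to exploit the pointwise Gaussian kernel bound from Theorem \ref{AuscherReg_mainthm} together with duality; no new off-diagonal argument is really required beyond the kernel estimate itself. The key observation is the pointwise majorization
\[
|e^{-tL}h(x)| \;\le\; C\int_{\Rn} t^{-n/2}\,e^{-\beta|x-y|^2/t}\,|h(y)|\,dy \;=\; C\,G_t*|h|(x),
\]
where $G_t(x)=t^{-n/2}e^{-\beta|x|^2/t}$. A direct computation gives $\|G_t\|_{L^r}=C_r\,t^{-n(1-1/r)/2}$ for $1\le r\le\infty$, so Young's convolution inequality is available.

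For (1), given $h\in L^p\cap L^2$ with $1\le p<2$, choose $r$ by $1/r=3/2-1/p$ (so $r\in[1,2]$); Young's inequality then yields
\[
\|e^{-tL}h\|_{L^2}\;\le\;C\,\|G_t\|_{L^r}\|h\|_{L^p}\;=\;C\,t^{-\gamma_p/2}\|h\|_{L^p},
\]
since $n(1-1/r)/2=n(1/p-1/2)/2=\gamma_p/2$. For (2), if $h$ is supported in the closed set $E$, then for any $x\in F$ and $y\in E$ we use $|x-y|^2\ge\tfrac12|x-y|^2+\tfrac12 d(E,F)^2$ to split
\[
e^{-\beta|x-y|^2/t}\;\le\;e^{-\beta d(E,F)^2/(2t)}\,e^{-\beta|x-y|^2/(2t)}.
\]
Factoring out the first exponential and running the same Young argument on the remaining Gaussian factor (which has the same $L^r$ norm up to a constant) gives
\[
\|e^{-tL}h\|_{L^2(F)}\;\le\;C\,t^{-\gamma_p/2}\,e^{-\beta d(E,F)^2/(2t)}\,\|h\|_{L^p}
\]
for $1<p<2$ (in fact for $1\le p<2$), which is the claimed off-diagonal bound.

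For (3), the plan is duality: the adjoint matrix $A^{\intercal}$ has symmetric part $A^s\in L^\infty$ and antisymmetric part $-A^a\in BMO$ with the same norms, so Theorem \ref{AuscherReg_mainthm} applies to $L^*$ as well. The arguments above then furnish the $L^p$--$L^2$ boundedness and off-diagonal estimates for $e^{-tL^*}$ in the indicated ranges. Since $d(E,F)$ is symmetric in $E$ and $F$, dualizing yields the $L^2$--$L^{p'}$ boundedness for $2<p'\le\infty$ and the $L^2$--$L^{p'}$ off-diagonal estimates for $2<p'<\infty$. The endpoint $p'=\infty$ for boundedness can alternatively be read off directly from Cauchy--Schwarz applied to the kernel, using that $\|K_t(x,\cdot)\|_{L^2}\le Ct^{-n/4}$ by integrating the Gaussian bound.

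I do not expect a genuine obstacle here; the whole proposition is a bookkeeping consequence of the pointwise Gaussian estimate already in hand. The only mild points to verify are that the exponents in Young's inequality match $\gamma_p$ correctly and that the duality preserves the off-diagonal decay (which it does, precisely because $d(E,F)=d(F,E)$, so the roles of $E$ and $F$ simply swap under transposition).
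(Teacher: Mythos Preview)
Your proof is correct and takes a genuinely different, more elementary route than the paper. The paper argues (1) via the $L^p$ boundedness of the semigroup combined with a Gagliardo--Nirenberg inequality (following Auscher's memoir), and deduces (2) by Riesz--Thorin interpolation between the $L^p$--$L^2$ boundedness from (1) and the $L^2$ off-diagonal estimates of Proposition~\ref{L2off-diag Prop}; part (3) is handled by duality in both approaches. Your argument instead goes directly through the pointwise Gaussian kernel bound of Theorem~\ref{AuscherReg_mainthm} together with Young's inequality and the splitting $e^{-\beta|x-y|^2/t}\le e^{-\beta d(E,F)^2/(2t)}e^{-\beta|x-y|^2/(2t)}$, which is cleaner here because that kernel bound is already in hand. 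The paper's route is the one that survives in settings where pointwise kernel bounds fail (e.g.\ complex $L^\infty$ coefficients in high dimension), which is why it is the one emphasized in the Auscher framework; in the present setting your shortcut is entirely legitimate and arguably preferable.
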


(1) is a consequence of the $L^p$ boundedness of the semigroup, and the Gagliardo-Nirenberg inequality. Details can be found in \cite{auscher2007necessary} Proposition 4.2. Item (2) follows from interpolating by the Riesz-Thorin theorem the $L^p-L^2$ boundeness with the $L^2$ off-diagonal estimates of the semigroup. And since (1) and (2) also holds for $e^{-tL^*}=(e^{-tL})^*$, (3) follows from duality.

The next results for $(t\Dt e^{-tL})_{t>0}$ are in the same spirit of Propostion \ref{etL_Lp-L2Prop}. We give the proof in full detail.
\begin{prop}\leavevmode\label{tDtetL_Lp-L2Prop}
\begin{enumerate}
    \item $(t \Dt e^{-tL})_{t>0}$ is $L^p-L^2$ bounded for any $1\le p<2$.
    \item $(t\Dt e^{-tL})_{t>0}$ satisfies the $L^p-L^2$ off-diagonal estimates for any $1<p<2$.
    \item $(t \Dt e^{-tL})_{t>0}$ is $L^2-L^p$ bounded for $2<p\le\infty$, and satisfies the $L^2-L^p$ off-diagonal estimates for $2<p<\infty$.
\end{enumerate}
\end{prop}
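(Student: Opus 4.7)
The plan is to mirror the strategy used for Proposition \ref{etL_Lp-L2Prop}, exploiting the semigroup factorization
\begin{equation*}
t\Dt e^{-tL}=-tLe^{-tL}=-2\br{\tfrac{t}{2}L\,e^{-(t/2)L}}\circ e^{-(t/2)L},
\end{equation*}
in which the first factor is uniformly bounded on $L^2$ by Proposition \ref{L2off-diag Prop} (with $E=F=\Rn$, or alternatively by Theorem \ref{De-tL_L2est_thm}), and the second factor is the semigroup at time $t/2$.

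For item (1), I will simply compose this factorization. Proposition \ref{etL_Lp-L2Prop}(1) provides $\norm{e^{-(t/2)L}h}_{L^2}\lesssim t^{-\gamma_p/2}\norm{h}_{L^p}$ for $1\le p<2$, and $\tfrac{t}{2}Le^{-(t/2)L}$ is uniformly bounded on $L^2$. Chaining these two bounds yields $\norm{t\Dt e^{-tL}h}_{L^2}\lesssim t^{-\gamma_p/2}\norm{h}_{L^p}$ for $1\le p<2$, which is item (1).

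For item (2), I will interpolate by the Riesz--Thorin theorem applied to the truncated operator $h\mapsto\1_F\cdot\bigl(t\Dt e^{-tL}\bigr)(\1_E h)$. From item (1) one has the bound at most $Ct^{-n/4}$ as an operator $L^1\to L^2$ (discarding any off-diagonal decay), while Proposition \ref{L2off-diag Prop} provides the bound at most $Ce^{-cd(E,F)^2/t}$ as an operator $L^2\to L^2$. Interpolating at parameter $\theta\in(0,1)$ with $1/p=1-\theta/2$ produces the bound $Ct^{-n(1-\theta)/4}e^{-\theta c d(E,F)^2/t}$; since $n(1-\theta)/4=\gamma_p/2$, this is the desired $L^p-L^2$ off-diagonal estimate for $1<p<2$ (the case $p=1$ is excluded precisely because $\theta=0$ kills the exponential decay).

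Item (3) follows by duality. The $L^2$ off-diagonal estimates hold also for $L^*$ by the remark following Proposition \ref{L2off-diag Prop}, so the same factorization argument gives items (1) and (2) for $(t\Dt e^{-tL^*})_{t>0}$ in the ranges $1\le p'<2$ and $1<p'<2$. Since the adjoint of $t\Dt e^{-tL}$ is $t\Dt e^{-tL^*}$, taking adjoints converts these into $L^2-L^p$ boundedness for $2<p\le\infty$ and $L^2-L^p$ off-diagonal estimates for $2<p<\infty$, as required; the endpoint $p=\infty$ comes from dualizing the $L^1-L^2$ bound. I do not expect a substantive obstacle: all the analytic content is carried by Propositions \ref{L2off-diag Prop} and \ref{etL_Lp-L2Prop}, and what remains is routine interpolation and duality bookkeeping, closely paralleling the proof of Proposition \ref{etL_Lp-L2Prop}.
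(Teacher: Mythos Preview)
Your proof is correct; items (2) and (3) match the paper's argument essentially verbatim (Riesz--Thorin interpolation between the $L^1$--$L^2$ bound and the $L^2$ off-diagonal estimate, and duality via $(t\Dt e^{-tL})^*=t\Dt e^{-tL^*}$). For item (1), however, you take a genuinely different and more economical route. The paper redoes the Gagliardo--Nirenberg/differential-inequality argument from scratch for $\Dt e^{-tL}f$: it writes
\[
\norm{\Dt e^{-tL}f}_{L^2}^2\le C\norm{\nabla\Dt e^{-tL}f}_{L^2}^{2\alpha}\norm{\Dt e^{-tL}f}_{L^p}^{2\beta},
\]
bounds the gradient factor via ellipticity by $-\tfrac{1}{2\lambda_0}\tfrac{d}{dt}\norm{\Dt e^{-tL}f}_{L^2}^2$, feeds in the $L^p$ bound \eqref{tDtetL-Lp}, and integrates the resulting ODE for $\varphi(t)=\norm{\Dt e^{-tL}f}_{L^2}^2$. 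Your factorization $t\Dt e^{-tL}=-2\bigl(\tfrac{t}{2}Le^{-(t/2)L}\bigr)\circ e^{-(t/2)L}$ instead leverages Proposition~\ref{etL_Lp-L2Prop}(1), already in hand, together with the uniform $L^2$ bound \eqref{DtetL_L2bd}. This is shorter and avoids repeating the ODE machinery; the paper's approach is self-contained in the sense that it does not presuppose the $L^p$--$L^2$ bound for the semigroup, but since that result precedes this one in the exposition, your reuse of it is entirely legitimate and arguably the cleaner choice.
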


\begin{proof}
(1). Let $p\in [1,2)$. Since $\Dt e^{-tL}f\in W^{1,2}(\Rn)$, 
we can apply the Gagliardo-Nirenberg inequality and get,
\begin{equation}\label{DtetL_GN_ineq}
    \norm{\Dt e^{-tL}f}_{L^2}^2\le C\norm{\nabla \Dt e^{-tL}f}_{L^2}^{2\alpha}\norm{\Dt e^{-tL}f}_{L^p}^{2\beta}
\end{equation}
for all $t>0$ and $f\in L^2\cap L^p$, where
\(
\alpha+\beta=1 \text{ and } (1+\gamma_p)\alpha=\gamma_p.
\)
Using $\Dt^2 e^{-tL}f\in L^2$ and  Lemma \ref{Evanslem}, we have
\begin{multline*}
    \Re \int_{\Rn}A\nabla\Dt e^{-tL}f\cdot\overline{\nabla\Dt e^{-tL}f}
    =\Re\br{L\Dt e^{-tL}f, \Dt e^{-tL}f}\\
    =-\Re\act{\Dt^2 e^{-tL}f,e^{-tL}f}
    =-\frac{1}{2}\frac{d}{dt}\norm{\Dt e^{-tL}f}_{L^2}^2.
\end{multline*}
By ellipticity,
\begin{equation}\label{nablaDt_bd_Dt}
    \norm{\nabla \Dt e^{-tL}f}_{L^2}^2\le-\frac{1}{2\lambda_0}\frac{d}{dt}\norm{\Dt e^{-tL}f}_{L^2}^2.
\end{equation}
Assume $f\in L^2\cap L^p$ with $\norm{f}_{L^p}=1$. Let $\vp(t)=\norm{\Dt e^{-tL}f}$. By the $L^p$ boundedness of $(\Dt e^{-tL})_{t>0}$, $\norm{\Dt e^{-tL}f}_{L^p}\le C_pt^{-1}$. Then by \eqref{DtetL_GN_ineq} and \eqref{nablaDt_bd_Dt}, one obtains
\(
t^{\frac{2\beta}{\alpha}}\vp(t)^{\frac{1}{\alpha}}\le -C\vp'(t)\).
Integrating in $t$,
\[
 \int_t^{2t}t^{\frac{2\beta}{\alpha}}dt\le-C\int_t^{2t}\frac{\vp'(t)}{\vp(t)^{1/\alpha}}dt, 
\]
and thus $\vp(t)\le Ct^{-\frac{2-\alpha}{1-\alpha}}$.
Here we assumed that $\vp(t)\neq0$. Otherwise, considering $\vp(t)+\eps$ and then letting $\eps\to0$ would give the same result. Thus
$\norm{t\Dt e^{-tL}f}_{L^2}^2\le Ct^{-\frac{\alpha}{1-\alpha}}=Ct^{-\gamma_p}$, which proves (1).

(2). As in the proof of Proposition \ref{etL_Lp-L2Prop}, it follows from the $L^2$ off-diagonal estimates of $(t\Dt e^{-tL})_{t>0}$, the $L^1-L^2$ boundedness of $(t\Dt e^{-tL})_{t>0}$, and the Riesz-Thorin interpolation theorem.

(3). Since $e^{-tL}f\in D(L)$ for any $f\in L^2$, $t\Dt e^{-tL}f=-tLe^{-tL}f=-te^{-tL}Lf$. So we have
\[
(t\Dt e^{-tL})^*=-tL^*e^{-tL^*}=t\Dt e^{-tL^*}.
\]
Since (2) and (3) also hold for $t\Dt e^{-tL^*}$, (3) follows from duality and a limiting argument.
 \end{proof}

For $(\sqrt{t}\nabla e^{-tL})_{t>0}$, when $p<2$, we immediately obtain the $L^p-L^2$ boundedness and the $L^p-L^2$ off-diagonal estimates. We include the short proofs here for the sake of completeness.
\begin{prop}\leavevmode
\begin{enumerate}
    \item $(\sqrt{t}\nabla e^{-tL})_{t>0}$ is $L^p-L^2$ bounded for any $1\le p<2$.
    \item $(\sqrt{t}\nabla e^{-tL})_{t>0}$ satisfies $L^p-L^2$ off-diagonal estimates for any $1<p<2$.
\end{enumerate}
\end{prop}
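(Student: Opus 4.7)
For part (1), the plan is to exploit the semigroup composition $e^{-tL}=e^{-(t/2)L}e^{-(t/2)L}$. Given $f\in L^p\cap L^2$ with $1\le p<2$, write
\[
\sqrt{t}\,\nabla e^{-tL}f=\sqrt{t}\,\nabla e^{-(t/2)L}\bigl(e^{-(t/2)L}f\bigr),
\]
and then chain the $L^2$-boundedness of $\sqrt{t}\,\nabla e^{-tL}$ (Theorem~\ref{De-tL_L2est_thm}, with a trivial rescaling constant) against the $L^p$-$L^2$ boundedness of the semigroup from Proposition~\ref{etL_Lp-L2Prop}(1). Concretely, one obtains
\[
\norm{\sqrt{t}\,\nabla e^{-tL}f}_{L^2}\lesssim \norm{e^{-(t/2)L}f}_{L^2}\lesssim (t/2)^{-\gamma_p/2}\norm{f}_{L^p}\lesssim t^{-\gamma_p/2}\norm{f}_{L^p}.
\]

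For part (2), the plan is to combine the composition trick with the off-diagonal estimates already at our disposal, using a geometric cutoff. Let $E,F$ be closed sets, $d=d(E,F)$, and let $h\in L^p\cap L^2$ with $\supp h\subset E$. Introduce the intermediate set
\[
G=\set{x\in\Rn:d(x,E)\le d/2},
\]
so that $d(G,F)\ge d/2$ and $d(E,\stcomp{G})\ge d/2$. Decompose
\[
\sqrt{t}\,\nabla e^{-tL}h=\sqrt{t}\,\nabla e^{-(t/2)L}\bigl(\1_{G}\,e^{-(t/2)L}h\bigr)+\sqrt{t}\,\nabla e^{-(t/2)L}\bigl(\1_{\stcomp{G}}\,e^{-(t/2)L}h\bigr).
\]
For the first term, apply the $L^2$ off-diagonal estimate for $\sqrt{t}\,\nabla e^{-tL}$ from Proposition~\ref{L2off-diag Prop} (with source $G$ and target $F$) to pick up the factor $e^{-cd^2/t}$, then apply the $L^p$-$L^2$ boundedness of $e^{-(t/2)L}$ to control $\norm{\1_G\,e^{-(t/2)L}h}_{L^2}$ by $t^{-\gamma_p/2}\norm{h}_{L^p}$. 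For the second term, apply the unweighted $L^2$ bound for $\sqrt{t}\,\nabla e^{-(t/2)L}$, then use the $L^p$-$L^2$ off-diagonal estimate for $e^{-(t/2)L}$ from Proposition~\ref{etL_Lp-L2Prop}(2), with $\supp h\subset E$ and target $\stcomp{G}$, which produces the factor $e^{-cd^2/t}$ together with the $t^{-\gamma_p/2}$ loss. Adding the two contributions yields the desired off-diagonal estimate, with constants depending only on $p$, $\lambda_0$, $\Lambda_0$, and $n$.

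Neither step is really an obstacle; the only mild point to watch is that Proposition~\ref{etL_Lp-L2Prop}(2) is stated for $p>1$, which is exactly why part (2) of the present proposition is restricted to $1<p<2$ while part (1) allows $p=1$ (since part (1) only requires the unrestricted $L^p$-$L^2$ boundedness, not its off-diagonal refinement). The rest is a routine semigroup-composition computation along the lines of \cite{auscher2007necessary}.
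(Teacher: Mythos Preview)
Your proof of part (1) is exactly the paper's argument: split the semigroup and chain the $L^2$ bound for $\sqrt{t}\,\nabla e^{-tL}$ with the $L^p$--$L^2$ bound for $e^{-tL}$.

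For part (2) your argument is correct but takes a different route from the paper. The paper does not use the geometric cutoff $G=\{d(\cdot,E)\le d/2\}$ and the two-piece composition; instead it simply interpolates, via Riesz--Thorin, between the $L^2$--$L^2$ off-diagonal estimate for $(\sqrt{t}\,\nabla e^{-tL})_{t>0}$ (Proposition~\ref{L2off-diag Prop}) and the $L^1$--$L^2$ boundedness just established in part (1). Your approach is more hands-on and has the minor virtue of making the dependence on the geometry explicit, but it relies on the $L^p$--$L^2$ off-diagonal estimate for the semigroup (Proposition~\ref{etL_Lp-L2Prop}(2)), which in the paper was itself obtained by interpolation---so you are effectively importing the interpolation step one level down. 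The paper's route is shorter: one interpolation, applied directly to the family $(\sqrt{t}\,\nabla e^{-tL})_{t>0}$, and it explains transparently why $p=1$ is excluded (the $L^1$ endpoint carries no exponential decay to interpolate from).
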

\begin{proof}
Let $p\in[1,2)$ and $f\in L^2\cap L^p$. Write $\sqrt{t}\nabla e^{-tL}f=\sqrt{t}\nabla e^{-\frac{tL}{2}}e^{-\frac{tL}{2}}f$. Then by the $L^2$ boundedness of $(\sqrt{t}\nabla e^{-tL})_{t>0}$ and the $L^p-L^2$ boundedness of $(e^{-tL})_{t>0}$, one has
\[
\norm{\sqrt{t}\nabla e^{-tL}f}_{L^2}\le C \norm{e^{-\frac{tL}{2}}f}_{L^2}\le C_pt^{-\frac{\gamma_p}{2}}\norm{f}_{L^p},
\]
which proves (1).

(2) follows from the $L^2$ off-diagonal estimates of $(\sqrt{t}\nabla e^{-tL})_{t>0}$, the $L^1-L^2$ boundedness of $(\sqrt{t}\nabla e^{-tL})_{t>0}$, and the Riesz-Thorin interpolation theorem.
 \end{proof}

When $p>2$, a duality argument would not give us the desired results as in Proposition \ref{etL_Lp-L2Prop} (3) and Proposition \ref{tDtetL_Lp-L2Prop} (3). However, we are able to derive a reverse H\"older type inequality for $\nabla e^{-tL}f$, and then use the $L^2-L^p$ boundedness of $(t\Dt e^{-tL})_{t>0}$ to obtain the $L^2-L^p$ boundedness of $(\sqrt{t}\nabla e^{-tL})_{t>0}$. Note that this approach is entirely different from the existing proof (see e.g. \cite{auscher2007necessary} chapter 4), as the latter relies on the boundedness of the coefficients and does not work for BMO coefficients.

\begin{prop}\leavevmode\label{sqrttnabla_L2-LpProp}
\begin{enumerate}
    \item $(\sqrt{t}\nabla e^{-tL})_{t>0}$ is $L^2-L^p$ bounded for any $2\le p\le2+\epsilon_1$ for some $\epsilon_1=\epsilon_1(\lambda_0,\Lambda_0,n)>0$.
    \item $(\sqrt{t}\nabla e^{-tL})_{t>0}$ satisfies the $L^2-L^p$ off-diagonal estimates for any $2\le p<2+\epsilon_1$, where $\epsilon_1$ is as in (1).
\end{enumerate}
\end{prop}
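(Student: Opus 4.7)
My plan is to combine a local reverse H\"older inequality for $\abs{\nabla e^{-tL}f}$ with the $L^2$--$L^p$ boundedness of $(t\Dt e^{-tL})_{t>0}$ from Proposition \ref{tDtetL_Lp-L2Prop} to establish (1), and then derive (2) by H\"older interpolation against the $L^2$ off-diagonal estimates of Proposition \ref{L2off-diag Prop}.

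Fix $f\in L^2(\Rn)$ and set $u(\cdot,t)=e^{-tL}f$, so that $\Dt u=\divg(A\nabla u)$ weakly on $\Rn$. The first step is a Caccioppoli-type inequality: for every ball $B_r(x_0)$ and $c=(u)_{B_{2r}(x_0)}$,
\[
\int_{B_r(x_0)}\abs{\nabla u}^2\lesssim\frac{1}{r^2}\int_{B_{2r}(x_0)}\abs{u-c}^2+r^2\int_{B_{2r}(x_0)}\abs{\Dt u}^2.
\]
To derive this I test the equation against $\eta^2(u-c)$ with $\eta\in C_c^\infty(B_{2r})$, $\eta\equiv 1$ on $B_r$. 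The ellipticity of $A^s$ generates the quadratic term; the symmetric cross terms are standard. The antisymmetric cross term, of the form $\int\eta(u-c)A^a\nabla u\cdot\nabla\eta$, is cast into div-curl form via the identities $2\bar u\,\Dj u=\Dj(\bar u^2)$ and $2\eta\,\Di\eta=\Di(\eta^2)$ together with $a^a_{ij}=-a^a_{ji}$, producing a multiple of $\sum_{i<j}\int a^a_{ij}[\Dj(\bar u^2)\Di(\eta^2)-\Di(\bar u^2)\Dj(\eta^2)]$. Proposition \ref{HardyProp2} places the bracketed quantity in $\mathcal{H}^1(\Rn)$, and Hardy-BMO duality with $\norm{a^a_{ij}}_{BMO}\le\Lambda_0$ yields the absorbable bound $\lesssim \Lambda_0 r^{-1}\norm{\bar u}_{L^2(B_{2r})}\norm{\nabla u}_{L^2(B_{2r})}$. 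Combining Caccioppoli with the Sobolev-Poincar\'e inequality (at exponent $q=2n/(n+2)<2$) then gives the reverse H\"older inequality
\[
\br{\fint_{B_r}\abs{\nabla u}^2}^{1/2}\lesssim\br{\fint_{B_{2r}}\abs{\nabla u}^q}^{1/q}+r\br{\fint_{B_{2r}}\abs{\Dt u}^2}^{1/2}.
\]

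Next I apply the Gehring--Giaquinta--Modica self-improvement lemma, which applies since the error $r\abs{\Dt u}$ lies in $L^p_{\loc}$ for every $p<\infty$ by Theorem \ref{Dtl_Kt_Thm}. Its output is an exponent $\epsilon_1=\epsilon_1(n,\lambda_0,\Lambda_0)>0$ such that, with $p=2+\epsilon_1$,
\[
\br{\fint_{B_r}\abs{\nabla u}^p}^{1/p}\lesssim\br{\fint_{B_{2r}}\abs{\nabla u}^2}^{1/2}+r\br{\fint_{B_{2r}}\abs{\Dt u}^p}^{1/p}.
\]
To globalize I set $r=\sqrt{t}$, sum the $p$-th power of the absolute form $\norm{\nabla u}_{L^p(B_r)}\lesssim r^{n/p-n/2}\norm{\nabla u}_{L^2(B_{2r})}+r\norm{\Dt u}_{L^p(B_{2r})}$ over a bounded-overlap cover of $\Rn$ by balls of radius $\sqrt{t}$, and use the superadditivity $\sum_j a_j^{p/2}\le\br{\sum_j a_j}^{p/2}$ valid for $p/2\ge 1$. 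Theorem \ref{De-tL_L2est_thm} controls $\norm{\nabla u}_{L^2(\Rn)}$ by $Ct^{-1/2}\norm{f}_{L^2}$, and Proposition \ref{tDtetL_Lp-L2Prop} controls $\norm{t\Dt u}_{L^p(\Rn)}$ by $Ct^{-\gamma_{2p}/2}\norm{f}_{L^2}$. A direct scaling check, using $n(1/p-1/2)=-\gamma_{2p}$, shows both contributions match and yields $\norm{\sqrt{t}\nabla u}_{L^p(\Rn)}\lesssim t^{-\gamma_{2p}/2}\norm{f}_{L^2}$, proving (1) at the endpoint $p=2+\epsilon_1$; intermediate $p\in(2,2+\epsilon_1)$ follow by H\"older interpolation with the $L^2$ bound of Theorem \ref{De-tL_L2est_thm}.

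For (2), given $h\in L^2$ supported in a closed set $E$ and a closed set $F$, I pick $\theta\in[0,1)$ with $\tfrac{1}{p}=\tfrac{1-\theta}{2}+\tfrac{\theta}{2+\epsilon_1}$ and apply H\"older:
\[
\norm{\sqrt{t}\nabla e^{-tL}(h\1_E)}_{L^p(F)}\le\norm{\sqrt{t}\nabla e^{-tL}(h\1_E)}_{L^2(F)}^{1-\theta}\norm{\sqrt{t}\nabla e^{-tL}(h\1_E)}_{L^{2+\epsilon_1}(\Rn)}^{\theta}.
\]
Proposition \ref{L2off-diag Prop} supplies the Gaussian decay $e^{-(1-\theta)c\,\dist(E,F)^2/t}$ through the first factor, (1) bounds the second, and the identity $\theta\gamma_{2,2+\epsilon_1}=\gamma_{2p}$ lines up the $t$-exponents to give the desired $L^2$--$L^p$ off-diagonal estimate. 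The main obstacle is the Caccioppoli inequality under the BMO hypothesis: the standard bounded-coefficient argument is unavailable, and one must carefully extract the div-curl structure from the antisymmetric cross term so as to pair it against the BMO coefficients through Hardy-BMO duality, which is precisely where Proposition \ref{HardyProp2} is essential.
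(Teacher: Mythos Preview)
Your proof is correct and follows essentially the same route as the paper: a Caccioppoli inequality with the antisymmetric term treated via Proposition \ref{HardyProp2}, Sobolev--Poincar\'e, Gehring self-improvement, globalization over a cover at scale $\sqrt{t}$, and then the $L^2$ bound for $\sqrt{t}\nabla e^{-tL}$ together with the $L^2$--$L^p$ bound for $t\Dt e^{-tL}$; for (2) you use H\"older interpolation where the paper invokes Riesz--Thorin, which is equivalent here.

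One point of imprecision: the antisymmetric contribution $\Lambda_0 r^{-1}\norm{u-c}_{L^2(B_{2r})}\norm{\nabla u}_{L^2(B_{2r})}$ is \emph{not} directly absorbable, since the gradient sits on the larger ball $B_{2r}$. After Young's inequality it leaves a term $\theta\int_{B_{2r}}\abs{\nabla u}^2$ on the right-hand side. The paper keeps this term and feeds it into the Giaquinta--Modica version of Gehring's lemma (Lemma \ref{GiaProp1.1}), which explicitly allows the extra $\theta\fint_{Q_{2R}}g^q$ with $\theta<1$. Your clean Caccioppoli as stated is not what the testing actually produces; you should either retain the $\theta$-term and cite the full Gehring--Giaquinta--Modica lemma, or insert a standard iteration over intermediate radii to remove it first.
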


\begin{proof}
Let $f\in\mathscr{S}(\Rn)$, and let $u(x,t)=e^{-tL}f(x)$. Then $u$ satisfies the equation $\Dt u+Lu=0$ in $L^2$. That is, for any $w\in W^{1,2}(\Rn)$,
\[
\int_{\Rn}A(x)\nabla u(x,t)\cdot\overline{\nabla w(x)}dx=-\int_{\Rn}\Dt u(x,t)\overline{w(x)}dx, \quad\forall\, t>0.
\]
Fix $t>0$ and fix a cube $Q\subset\Rn$ with $l(Q)=\rho_0$, where $\rho_0$ is to be determined. Let $x_0\in 3Q$ and let
$0<\rho<\min\set{\frac{1}{2}\dist(x_0,\bdy(3Q)),\rho_0}$. Let $Q_s(x)$ denote the cube centered at $x$ with side length $s$. Choose $\vp\in C_0^2(\Rn)$, with $0\le\vp\le1$, $\vp=1$ in $Q_{\rho}(x_0)$, $\supp\vp\subset Q_{\frac{3}{2}\rho}(x_0)$, and $\abs{\nabla\vp}\lesssim\frac{1}{\rho}$.

Let $w(x)=\br{u(x,t)-c}\vp^2(x)$, where $c=\fint_{Q_{2\rho}(x_0)}u(x,t)dx$. Then 
\[
\int_{\Rn}A(x)\nabla u(x,t)\cdot\nabla\br{(u(x,t)-c)\vp^2(x)}dx=-\int_{\Rn}\Dt u(x,t)\br{u(x,t)-c}\vp^2(x)dx.
\]
We have
\begin{align*}
    \int_{\Rn}A^s\nabla u\cdot\nabla\br{(u-c)\vp^2}dx&=\int_{\Rn}A^s\nabla u\cdot\nabla u\vp^2dx+2\int_{\Rn}A^s\nabla u\cdot\nabla\vp\br{(u-c)\vp}dx\\
    &\ge \frac{\lambda_0}{2}\int_{\Rn}\abs{\nabla u}^2\vp^2- C\int_{\Rn}\abs{u-c}^2\abs{\nabla\vp}^2\\
    &\ge \frac{\lambda_0}{2}\int_{Q_{\rho}(x_0)}\abs{\nabla u}^2-C\rho^{-2}\int_{Q_{\frac{3}{2}\rho}(x_0)}\abs{u-c}^2.
\end{align*}
To deal with $\int_{\Rn}A^a\nabla u\cdot\nabla\br{(u-c)\vp^2}dx$, we introduce another bump function $\eta\in C_0^2(\Rn)$ with $0\le\eta\le1$, $\eta=1$ on $Q_{\frac{3}{2}\rho}(x_0)$, $\supp\eta\subset Q_{2\rho}(x_0)$, and $\abs{\nabla\eta}\lesssim\frac{1}{\rho}$. Then we have
\begin{align*}
    \int_{\Rn}A^a\nabla u\cdot\nabla\br{(u-c)\vp^2}dx&=\int_{\Rn}A^a\nabla u\cdot\nabla(\vp^2)(u-c)dx\\
    &=\frac{1}{2}\int_{\Rn}A^a\nabla\br{(u-c)^2\eta^2}\cdot\nabla(\vp^2).
\end{align*}
By Proposition \ref{HardyProp2}, 
\begin{multline*}
    \abs{\int_{\Rn}A^a\nabla u\cdot\nabla\br{(u-c)\vp^2}dx}\le C\norm{\nabla\vp}_{L^{\infty}}\norm{(u-c)\eta}_{L^2}\norm{\nabla \br{(u-c)\eta}}_{L^2}\\
    \le\frac{\lambda_0}{4}\int_{\Rn}\abs{\nabla u}^2\eta^2dx+C\norm{\nabla\vp}_{L^{\infty}}^2\int_{\Rn}\abs{u-c}^2\eta^2dx\\+C\norm{\nabla\vp}_{L^{\infty}}\int_{\Rn}\abs{u-c}^2\abs{\nabla\eta}^2dx\\
    \le\frac{\lambda_0}{4}\int_{Q_{2\rho}(x_0)}\abs{\nabla u}^2dx+C\rho^{-2}\int_{Q_{2\rho}(x_0)}\abs{u-c}^2dx.
\end{multline*}
For $\int_{\Rn}\Dt u(x,t)\br{u(x,t)-c}\vp^2(x)dx$, we use Cauchy-Schwarz inequality to get
\begin{multline*}
    \abs{\int_{\Rn}\Dt u(x,t)\br{u(x,t)-c}\vp^2(x)dx}\\
    \le\frac{1}{2}\rho^2\int_{\Rn}\abs{\Dt u}^2\vp^2dx+\frac{1}{2}\rho^{-2}\int_{\Rn}\abs{u-c}^2\vp^2dx\\
    \le\frac{1}{2}\rho_0^2\int_{Q_{2\rho}(x_0)}\abs{\Dt u}^2dx+\frac{1}{2}\rho^{-2}\int_{Q_{2\rho}(x_0)}\abs{u-c}^2dx.
\end{multline*}
Combining these estimates, we obtain
\begin{multline*}
    \frac{\lambda_0}{2}\int_{Q_{\rho}(x_0)}\abs{\nabla u}^2dx\\
    \le\frac{C}{\rho^2}\int_{Q_{2\rho}(x_0)}\abs{u-c}^2dx+\frac{\lambda_0}{4}\int_{Q_{2\rho}(x_0)}\abs{\nabla u}^2dx+\frac{\rho_0^2}{2}\int_{Q_{2\rho}(x_0)}\abs{\Dt u}^2dx.
\end{multline*}
The Sobolev-Poincar\'e inequality gives
\begin{multline*}
    \fint_{Q_{\rho}(x_0)}\abs{\nabla u}^2dx\\
    \le C\br{\fint_{Q_{2\rho}(x_0)}\abs{\nabla u}^{\frac{2n}{n+2}}}^{\frac{n+2}{n}}+\frac{1}{2}\fint_{Q_{2\rho}(x_0)}\abs{\nabla u}^2dx+C\rho_0^2\fint_{Q_{2\rho}(x_0)}\abs{\Dt u}^2dx.
\end{multline*}
Then by Lemma \ref{GiaProp1.1}, there is some $\epsilon_1=\epsilon_1(\lambda_0,\Lambda_0,n)>0$, such that for all $p\in[2,2+\epsilon_1]$,
\[
\br{\fint_{Q}\abs{\nabla u}^pdx}^{1/p}\le C\Big\{\br{\fint_{2Q}\abs{\nabla u}^2}^{1/2}+\br{\fint_{2Q}\abs{\rho_0\Dt u}^p}^{1/p}\Big\},
\]
that is,
\begin{equation}\label{RH_Q}
    \int_Q\abs{\nabla u}^pdx\le C\rho_0^{-p\gamma_p}\br{\int_{2Q}\abs{\nabla u}^2}^{p/2}+\int_{2Q}\abs{\rho_0\Dt u}^pdx.
\end{equation}

Decompose $\Rn$ into a union of disjoint cubes $\Rn=\sqcup_{j=1}^{\infty}Q_j$ with each $Q_j$ having side length $\rho_0$. For each $Q_j$, applying \eqref{RH_Q} and then summing in $j$, one has
\[
\norm{\nabla u}_{L^p(\Rn)}\le C\rho_0^{-\gamma_p}\norm{\nabla u}_{L^2(\Rn)}+C\norm{\rho_0\Dt u}_{L^p(\Rn)}, \quad\forall\, p\in[2,2+\epsilon_1].
\]
Choosing $\rho_0=\sqrt{t}$ gives
\[
\norm{\sqrt{t}\nabla u}_{L^p}\le Ct^{-\frac{\gamma_p}{2}}\norm{\sqrt{t}\nabla u}_{L^2}+C\norm{t\Dt u}_{L^p}, \quad\forall\, p\in[2,2+\epsilon_1].
\]
Then by the $L^2$ boundedness of $(\sqrt{t}\nabla e^{-tL})_{t>0}$ and the $L^2-L^p$ boundedness of $(t\Dt e^{-tL})_{t>0}$, we obtain
\[
\norm{\sqrt{t}\nabla e^{-tL}f}_{L^p}\le Ct^{-\frac{\gamma_p}{2}}\norm{f}_{L^2}, \qquad\forall\,p\in[2,2+\epsilon_1],\quad f\in\mathscr{S}(\Rn).
\]
Thus (1) follows from a standard limiting argument.

(2) can be proved using the $L^2$ off-diagonal estimates and the $L^2-L^{2+\epsilon_1}$ boundedness of $(\sqrt{t}\nabla e^{-tL})_{t>0}$, and the Riesz-Thorin interpolation theorem. 
\end{proof}

\subsection{$L^p$ Theory for the square root}\label{Lp Kato subsec}
Since $L$ is an m-accretive operator, there is a unique m-accretive square root $L^{1/2}$ such that
\begin{equation}\label{sqrt_defid}
    L^{1/2}L^{1/2}=L \qquad\text{in}\quad D(L).
\end{equation}
Also, $L^{1/2}$ is m-sectorial with the numerical range contained in the sector $\abs{\arg\xi}\le\frac{\pi}{4}$. And $D(L)$ is a {\it core} of $L^{1/2}$, i.e. $\set{(u,L^{1/2}u):u\in D(L)}$ is dense in the graph $\set{(u, L^{1/2}u):u\in D(L^{1/2})}$ (see \cite{kato1976perturbation} p.281 for a proof for these facts).

Our goal in this section is to prove the $L^p$ bounds for the square root.

Many formulas can be used to compute $L^{1/2}$. The one we are going to use is
\begin{equation}\label{squareroot_formula}
    L^{1/2}f=\pi^{-1/2}\int_0^{\infty}e^{-tL}Lf\frac{dt}{\sqrt{t}}.
\end{equation}
Observe that the integral converges in $L^2$ when $f\in D(L)$. Since for $f\in D(L)$, $Lf\in L^2$, then by the $L^2$ boundedness of the semigroup, $\int_0^1e^{-tL}Lf\frac{dt}{\sqrt{t}}$ converges. And the $L^2$ bound of $(t\Dt e^{-tL})_{t>0}$ implies that $\int_1^{\infty}e^{-tL}Lf\frac{dt}{\sqrt{t}}$ converges in $L^2$.

The determination of the domain of the square root of $L$ has become known as the Kato square root problem. It has been shown by Auscher, Hofmann, Lacey, McIntosh, and Tchamitchian \cite{auscher2002solution} that for a uniformly complex elliptic operator $L =-\divg(A\nabla)$ with bounded measurable coefficients, one has in all dimensions
\begin{equation}\label{sqaurerootL2_equiv}
    \norm{L^{1/2}f}_{L^2}\approx\norm{\nabla f}_{L^2},
\end{equation}
and the domain of $L^{1/2}$ is $W^{1,2}$, which was known as the Kato's conjecture. Recently, Escauriaza and Hofmann (\cite{escauriaza2018kato}) extended the result to the same kind of operators that we are interested in, that is, operators with a BMO anti-symmetric part. Note that although they only showed one side of \eqref{sqaurerootL2_equiv}, that is
\begin{equation}\label{Kato}
    \norm{L^{1/2}f}_{L^2}\lesssim\norm{\nabla f}_{L^2},
\end{equation}
the other direction follows from a duality argument.
In fact, note that the same argument applies to $L^*$ so one has
\begin{equation}\label{Kato*}
   \norm{(L^*)^{1/2}f}_{L^2}\lesssim\norm{\nabla f}_{L^2}.
\end{equation}
It turns out that \eqref{Kato} and \eqref{Kato*} are enough:

\begin{lem}
If \eqref{Kato} holds for all $f\in D(L)$, and \eqref{Kato*} holds for all $f\in D(L^*)$. Then
\begin{equation}\label{Kato_reverse}
    \norm{\nabla f}_{L^2}\lesssim\norm{L^{1/2}f}_{L^2}, \qquad\forall\, f\in W^{1,2}.
\end{equation}
And the domain of $L^{1/2}$ is $W^{1,2}(\Rn)$.
\end{lem}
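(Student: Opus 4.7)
The strategy is the standard duality argument for square roots of m-accretive operators, so I first extend the Kato-type bound \eqref{Kato} from $D(L)$ to all of $W^{1,2}$ by density plus closedness of $L^{1/2}$, then sandwich $\|\nabla f\|_{L^2}$ between $\|L^{1/2}f\|_{L^2}$ and $\|(L^*)^{1/2}f\|_{L^2}$ via the sesquilinear form of $L$, and finally identify $D(L^{1/2})$ with $W^{1,2}$ using the fact that $D(L)$ is a core of $L^{1/2}$.

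\textbf{Step 1: extend \eqref{Kato} and \eqref{Kato*} to $W^{1,2}$.} Recall from the Kato representation lemma (Section \ref{operators def subsection}) that $D(L)$ is dense in $D(t)=W^{1,2}(\Rn)$ with respect to the $W^{1,2}$ norm. Given $f\in W^{1,2}$, pick $f_n\in D(L)$ with $f_n\to f$ in $W^{1,2}$. The hypothesis \eqref{Kato} applied on $D(L)$ gives $\{L^{1/2}f_n\}$ Cauchy in $L^2$. Since $L^{1/2}$ is m-accretive, hence closed, this forces $f\in D(L^{1/2})$ and $L^{1/2}f_n\to L^{1/2}f$ in $L^2$. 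Thus $W^{1,2}\subset D(L^{1/2})$ and \eqref{Kato} holds for every $f\in W^{1,2}$. The identical argument with $L^*$ in place of $L$ gives $W^{1,2}\subset D((L^*)^{1/2})$ together with $\|(L^*)^{1/2}f\|_{L^2}\lesssim\|\nabla f\|_{L^2}$ on all of $W^{1,2}$.

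\textbf{Step 2: reverse inequality on $D(L)$ via duality.} Fix $f\in D(L)\subset W^{1,2}$. By the definition of the form $t[\cdot,\cdot]$ and the ellipticity of $A^s$, together with Proposition \ref{HardyProp1} for the anti-symmetric part,
\begin{equation*}
\lambda_0\,\|\nabla f\|_{L^2}^2 \le \Re(Lf,f) = \Re\,t[f,f].
\end{equation*}
On the other hand, for m-accretive operators one has $(L^{1/2})^{*}=(L^{*})^{1/2}$ (Kato, \cite{kato1976perturbation}), so since $f\in W^{1,2}\subset D((L^*)^{1/2})$ by Step 1,
\begin{equation*}
(Lf,f) = (L^{1/2}L^{1/2}f,\,f) = (L^{1/2}f,\,(L^*)^{1/2}f).
\end{equation*}
Cauchy--Schwarz and the $L^*$-Kato bound from Step 1 give
\begin{equation*}
|(Lf,f)| \le \|L^{1/2}f\|_{L^2}\,\|(L^*)^{1/2}f\|_{L^2} \lesssim \|L^{1/2}f\|_{L^2}\,\|\nabla f\|_{L^2}.
\end{equation*}
Combining with the ellipticity lower bound and dividing by $\|\nabla f\|_{L^2}$ yields $\|\nabla f\|_{L^2}\lesssim \|L^{1/2}f\|_{L^2}$ for all $f\in D(L)$.

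\textbf{Step 3: pass the reverse inequality to $W^{1,2}$ and identify $D(L^{1/2})$.} For arbitrary $f\in W^{1,2}$, approximate by $f_n\in D(L)$ in $W^{1,2}$; by Step 1, $L^{1/2}f_n\to L^{1/2}f$ in $L^2$, while $\nabla f_n\to \nabla f$ in $L^2$, so the inequality of Step 2 passes to the limit and gives \eqref{Kato_reverse} on $W^{1,2}$. For the reverse inclusion $D(L^{1/2})\subset W^{1,2}$, use that $D(L)$ is a core of $L^{1/2}$: given $f\in D(L^{1/2})$ there exist $f_n\in D(L)$ with $f_n\to f$ and $L^{1/2}f_n\to L^{1/2}f$ in $L^2$; Step 2 applied to the Cauchy sequence $\{L^{1/2}f_n\}$ shows $\{\nabla f_n\}$ is Cauchy in $L^2$, hence $f\in W^{1,2}$. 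Together with Step 1 this gives $D(L^{1/2})=W^{1,2}(\Rn)$.

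\textbf{Main obstacle.} The only nontrivial input is the operator-theoretic identity $(L^{1/2})^{*}=(L^{*})^{1/2}$ used in Step 2, since $L$ is merely m-accretive (not self-adjoint). This is standard for m-accretive operators with sectorial numerical range, which was verified in Proposition \ref{nr_t_Prop}, and is taken from \cite{kato1976perturbation}; modulo this citation the rest of the argument is density, closedness, and Cauchy--Schwarz.
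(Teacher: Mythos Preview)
Your proof is correct and takes a genuinely more direct route than the paper's. For the reverse inequality, the paper pairs $\nabla f$ against an arbitrary test vector field $\bd g$, introduces the regularization $h_\delta=(L^*+\delta I)^{-1}(-\divg\bd g)\in D(L^*)$, writes $(\nabla f,\bd g)=(L^{1/2}f,(L^*)^{1/2}h_\delta)+\delta(f,h_\delta)$, estimates $\norm{\nabla h_\delta}_{L^2}$ via the form of $L^*$, and lets $\delta\to 0$. You instead invoke ellipticity directly to get $\lambda_0\norm{\nabla f}_{L^2}^2\le\Re(Lf,f)$ and then rewrite $(Lf,f)=(L^{1/2}f,(L^*)^{1/2}f)$ using $(L^{1/2})^*=(L^*)^{1/2}$. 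Both arguments ultimately rest on the same adjoint identity for m-accretive square roots (the paper uses it implicitly in the step $(f,(L^*)^{1/2}(L^*)^{1/2}h_\delta)=(L^{1/2}f,(L^*)^{1/2}h_\delta)$, and again later in Lemma~\ref{L1/2repres Lem}), so your explicit citation of this fact is appropriate. Your approach is shorter and avoids the $\delta$-regularization; the paper's duality-against-$\bd g$ argument has the mild advantage that it never requires $f\in D(L)$ along the way (working in $\mathscr{S}$ instead), but since you close up by density anyway this makes no real difference. The domain-identification steps (Steps~1 and~3) are essentially identical to the paper's.
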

\begin{proof}

We first show that $W^{1,2}\subset D(L^{1/2})$. Since $D(L)$ is dense in $W^{1,2}$, for any $u\in W^{1,2}$, there are $\set{u_k}\subset D(L)$ such that $u_k\to u$ in $W^{1,2}$. Then by \eqref{Kato}, $\norm{L^{1/2}(u_k-u_j)}_{L^2}\lesssim\norm{\nabla (u_k-u_j)}_{L^2}$. This shows that $\set{L^{1/2}u_k}$ is Cauchy in $L^2$. Suppose $L^{1/2}u_k\to v\in L^2$. Since $L^{1/2}$ is closed, we have $L^{1/2}u=v$, and $u\in D(L^{1/2})$.

Now we show \eqref{Kato_reverse} holds. Let $f\in \mathscr{S}(\Rn)$. Let $\bd g\in \mathscr{S}(\Rn)$ with $\norm{\bd g}_{L^2}\le1$. For any $\delta>0$, define
\(h_{\delta}:= (L^*+\delta I)^{-1}(-\divg\bd g)\in D(L^*)\). That is,
\begin{equation}\label{hdelta}
    \delta\int_{\Rn}h_{\delta}\,\overline{w}\,dx+\int_{\Rn}A^*\nabla h_{\delta}\cdot\overline{\nabla w}\, dx=-\int_{\Rn}\divg \bd{g}\,\overline{w}\,dx\qquad\forall\, w\in W^{1,2}(\Rn).
\end{equation}
Letting $w=h_{\delta}$ and taking real parts of \eqref{hdelta}, then ellipticity and Young's inequality give
\begin{equation}\label{hdelta L2gradient}
   \delta\int_{\Rn}\abs{h_{\delta}}^2dx+ \frac{\lambda_0}{2}\int_{\Rn}\abs{\nabla h_{\delta}}^2dx\le C\int_{\Rn}\abs{\bd g}^2dx\le C.
\end{equation}

By writing
\begin{align*}
    \br{\nabla f,\bd g}&=-\br{f,\divg\bd g}=\br{f,(L^*+\delta I)h_{\delta}}=(f,(L^*)^{1/2}(L^*)^{1/2}h_{\delta})+\delta(f,h_{\delta})\\
    &=(L^{1/2}f,(L^*)^{1/2}h_{\delta})+\delta(f,h_{\delta}),
\end{align*}
we get that
\begin{align*}
   \abs{(\nabla f,\bd g)}&\le \norm{L^{1/2}f}_{L^2}\norm{(L^*)^{1/2}h_{\delta}}_{L^2}+\delta \norm{f}_{L^2}\norm{h_{\delta}}_{L^2}\\
    &\lesssim\norm{L^{1/2}f}_{L^2}\norm{\nabla h_{\delta}}_{L^2}+\delta \norm{f}_{L^2}\norm{ h_{\delta}}_{L^2}\\
    &\lesssim\norm{L^{1/2}f}_{L^2}+\delta^{1/2} \norm{f}_{L^2},
\end{align*}
where we have used \eqref{Kato*} in the second inequality, and \eqref{hdelta L2gradient} in the last inequality.
Therefore,
\[
\norm{\nabla f}_{L^2}=\sup_{\substack{\bd g\in\mathscr{S}(\Rn)\\\norm{\bd g}_{L^2}\le 1}}\abs{\br{\nabla f,\bd g}}
\lesssim\norm{L^{1/2}f}_{L^2}+\delta^{1/2} \norm{f}_{L^2}.
\]
Letting $\delta\to0$, we obtain \eqref{Kato_reverse} holds for all $f\in \mathscr{S}(\Rn)$. Since $\mathscr{S}(\Rn)$ is dense in $W^{1,2}(\Rn)$, \eqref{Kato_reverse} holds for all $f\in W^{1,2}(\Rn)$, which contains the domain of $L$.

Finally, we show $D(L^{1/2})\subset W^{1,2}$, and thus proves $D(L^{1/2})= W^{1,2}$. To this end, let $u\in D(L^{1/2})$. Since $D(L)$ is a core of $L^{1/2}$, there exist $\set{u_n}\subset D(L)$ such that $u_n\to u$ in $L^2$, and $L^{1/2}(u_n)\to L^{1/2}u$ in $L^2$. Since
\[
\norm{\nabla (u_n-u_m)}_{L^2}\lesssim\norm{L^{1/2}(u_n-u_m)}_{L^2},
\]
$\set{u_n}$ is a Cauchy sequence in $W^{1,2}$. This implies $u\in W^{1,2}$.
 \end{proof}

From the Kato's estimate \eqref{Kato} one can see that $L^{1/2}$ can be extended to the homogeneous Sobolev space $\dot W^{1,2}$.
In particular, $L^{1/2}$ extends to an isomorphism from $\dot W^{1,2}$ to $L^2$ and
\begin{equation}\label{sqroot_isomorphism}
   g=L^{1/2}L^{-1/2}g \qquad\forall\, g\in L^2.
\end{equation}
In fact, by \eqref{Kato_reverse}, $L^{1/2}$ is one-to-one. So it suffices to justify that the range of $L^{1/2}$ is the whole $L^2$. To this end, we first show that the range of $L$ is dense in $L^2$.
\begin{lem}\label{RL_lem}
The range of $L$ is dense in $L^2$.
\end{lem}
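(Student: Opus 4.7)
The plan is to show that $R(L)^{\perp}=\set{0}$ in $L^2(\Rn)$, from which the density follows by the orthogonal projection theorem. So I would take $g\in L^2(\Rn)$ with $(Lf,g)=0$ for every $f\in D(L)$ and argue that $g$ must vanish identically.

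By the very definition of the Hilbert-space adjoint, the hypothesis is equivalent to $g\in D(L^*)$ with $L^*g=0$. The key preliminary step is to identify $L^*$ with the m-accretive sectorial operator produced by the construction of Section \ref{operators def subsection} applied to the transposed matrix $A^{\intercal}$: its symmetric part is unchanged and its antisymmetric part is $-A^a$, which is still in $BMO(\Rn)$, so the construction goes through verbatim and produces a unique m-accretive sectorial operator associated to the adjoint form $t^*[u,v]:=\overline{t[v,u]}$. Combined with the uniqueness clause of the representation lemma (\cite{kato1976perturbation}, Chapter VI), this operator must be the Hilbert-space adjoint $L^*$. In particular $D(L^*)\subset D(t^*)=W^{1,2}(\Rn)$ and $(L^*g,v)=t^*[g,v]$ for every $v\in W^{1,2}$.

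Testing with $v=g$ now gives $\overline{t[g,g]}=t^*[g,g]=(L^*g,g)=0$, and hence $\Re t[g,g]=0$. But the coercivity computation carried out in Proposition \ref{nr_t_Prop} shows, with $g=g_1+ig_2$ the decomposition into real and imaginary parts, that
\[
\Re t[g,g]=\int_{\Rn}a_{ij}^s\br{\partial_jg_1\partial_ig_1+\partial_jg_2\partial_ig_2}\,dx\ge\lambda_0\norm{\nabla g}_{L^2}^2,
\]
since the BMO antisymmetric part drops out on the diagonal of a quadratic form (by $a_{ij}^a=-a_{ji}^a$ paired with the symmetric tensor $\partial_jg_k\partial_ig_k$). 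Consequently $\nabla g\equiv 0$, so $g$ is a.e.\ constant on $\Rn$, and since $g\in L^2(\Rn)$ this forces $g\equiv 0$, completing the proof.

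The only point in this plan that requires a bit of care is the identification of the Hilbert-space adjoint $L^*$ with the operator associated to the adjoint form $t^*$; once that is recorded, the remainder is an immediate application of the elliptic coercivity of $\Re t$ already established in Section \ref{operators def subsection}.
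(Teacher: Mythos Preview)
Your proof is correct and takes a genuinely different route from the paper. The paper's argument is constructive: given $g\in L^2$, it approximates $g$ in $L^2$ by $g_\delta\in\mathscr{S}(\Rn)$, sets $f^{(\delta)}_\epsilon=(L+\epsilon I)^{-1}g_\delta\in D(L)$, and shows $Lf^{(\delta)}_\epsilon\to g$ as $\epsilon\to0$ by writing $Lf^{(\delta)}_\epsilon=g_\delta-\epsilon(L+\epsilon I)^{-1}g_\delta$ and controlling the error term via the $L^p$--$L^2$ bounds for the semigroup. Your approach instead shows $R(L)^\perp=\{0\}$ by recognizing that any $g$ in the orthogonal complement lies in $D(L^*)\subset W^{1,2}$ with $L^*g=0$, and then the coercivity $\Re t[g,g]\ge\lambda_0\norm{\nabla g}_{L^2}^2$ forces $g\equiv0$. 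Your argument is shorter and more elementary, relying only on the form structure and Kato's representation theorem (the identification of $L^*$ with the operator of the adjoint form $t^*$ is indeed \cite{kato1976perturbation}, Chapter VI, Theorem 2.5). The paper's approach, however, has the advantage of being transportable to $L^p$ for $1<p<\infty$ (as noted in the remark following the lemma), since it does not rely on the Hilbert-space orthogonal complement; your argument is tied to $p=2$.
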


\begin{proof}
Let $g\in L^2$. For any $\delta>0$, let $g_{\delta}\in\mathscr{S}(\Rn)$ such that $\norm{g_{\delta}-g}_{L^2}<\delta$. Define
\[
f^{(\delta)}_{\epsilon}:= \br{L+\epsilon I}^{-1}g_{\delta}\in D(L).
\]
We claim that $\norm{Lf^{(\delta)}_{\epsilon}-g}_{L^2}<C\delta$ when $\epsilon$ is sufficiently small. And this would complete the proof of the lemma.

We write
\[
Lf^{(\delta)}_{\epsilon}-g=L(L+\epsilon I)^{-1}g_{\delta}-g=g_{\delta}-\epsilon(L+\epsilon I)^{-1}g_{\delta}-g,
\]
and then
\begin{align}\label{RL_1}
    \norm{Lf^{(\delta)}_{\epsilon}-g}_{L^2}\le\norm{g_{\delta}-g}_{L^2}+\epsilon\norm{(L+\epsilon I)^{-1}g_{\delta}}<\delta+\epsilon\norm{(L+\epsilon I)^{-1}g_{\delta}}.
\end{align}
We have
\[
(L+\epsilon I)^{-1}g_{\delta}=\int_0^{\infty}e^{-t(L+\epsilon I)}(g_{\delta})dt,
\]
and thus
\[
\norm{(L+\epsilon I)^{-1}g_{\delta}}_{L^2}\le\int_0^{\infty}e^{-t\epsilon}\norm{e^{-tL}g_{\delta}}_{L^2}dt.
\]
Fix any $1<p<2$, then by the $L^p-L^2$ bound of the semigroup, we obtain
\begin{align*}
    \norm{(L+\epsilon I)^{-1}g_{\delta}}_{L^2}&\lesssim \int_0^{\infty}e^{-t\epsilon}t^{-\frac{\gamma_p}{2}}\norm{g_{\delta}}_{L^p}dt\\
    &\lesssim\epsilon^{\frac{\gamma_{p}}{2}-1}\int_0^{\infty}e^{-\tau}\tau^{-\frac{\gamma_p}{2}}d\tau\norm{g_{\delta}}_{L^p}\lesssim\epsilon^{\frac{\gamma_{p}}{2}-1}.
\end{align*}
By choosing $\epsilon$ sufficiently small, this and \eqref{RL_1} imply that $\norm{Lf^{(\delta)}_{\epsilon}-g}_{L^2}<2\delta$. Since $\delta>0$ is arbitrary, it proves that the range of $L$ is dense in $L^2$.
 \end{proof}

\begin{re}
By a similar argument and interpolation, one can show that the range of $L$ is dense in $L^p$, for any $1<p<\infty$.
\end{re}

\begin{cor}
The range of $L^{1/2}$ acting on $\dot{W}^{1,2}$ is $L^2$.
\end{cor}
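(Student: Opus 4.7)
The plan is to use the density of the range of $L$ in $L^2$ (just established in Lemma \ref{RL_lem}) together with the factorization $L = L^{1/2}L^{1/2}$ that holds on $D(L)$ by \eqref{sqrt_defid}. Since the reverse Kato inequality \eqref{Kato_reverse} already gives injectivity of $L^{1/2}$ on $\dot W^{1,2}$, the only thing to establish is surjectivity onto $L^2$.

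Concretely, I would fix $g \in L^2$ and, by Lemma \ref{RL_lem}, choose $f_n \in D(L)$ with $Lf_n \to g$ in $L^2$. Setting $u_n \doteq L^{1/2} f_n$, the identity \eqref{sqrt_defid} yields
\[
L^{1/2} u_n = L^{1/2} L^{1/2} f_n = L f_n \To g \qquad \text{in } L^2.
\]
In particular, $\{L^{1/2}u_n\}$ is a Cauchy sequence in $L^2$. Applying the reverse Kato estimate \eqref{Kato_reverse} to the differences (which lie in $W^{1,2} = D(L^{1/2})$),
\[
\norm{\nabla(u_n - u_m)}_{L^2} \lesssim \norm{L^{1/2}(u_n - u_m)}_{L^2} = \norm{Lf_n - Lf_m}_{L^2} \To 0,
\]
so $\{u_n\}$ is Cauchy in $\dot W^{1,2}$ and converges to some $u \in \dot W^{1,2}$.

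To close the argument I would invoke the continuous extension of $L^{1/2}$ to $\dot W^{1,2} \to L^2$, which is afforded by the Kato estimate \eqref{Kato} together with the density of $W^{1,2}$ in $\dot W^{1,2}$. This extension is continuous, so $L^{1/2} u_n \to L^{1/2} u$ in $L^2$, and combining with the above identification of the limit gives $L^{1/2} u = g$, proving that $g$ lies in the range. There is no real obstacle here: every ingredient (density of $R(L)$, the factorization $L^{1/2} L^{1/2} = L$, and the two-sided Kato estimates) has already been assembled, and the only point requiring care is the bookkeeping for the extension of $L^{1/2}$ from $W^{1,2}$ to the homogeneous space $\dot W^{1,2}$.
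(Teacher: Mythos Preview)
Your proposal is correct and follows essentially the same route as the paper: both arguments use the density of $R(L)$ in $L^2$ together with the factorization $L=L^{1/2}L^{1/2}$ to show the range of $L^{1/2}$ is dense, then invoke the reverse Kato inequality \eqref{Kato_reverse} to pass to a limit in $\dot W^{1,2}$ and the forward Kato inequality \eqref{Kato} (continuity of the extension) to identify the limit. The only cosmetic difference is that the paper phrases this as ``dense range plus closed range,'' whereas you carry out the same steps directly for a fixed $g$.
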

\begin{proof}
Since $L^{1/2}L^{1/2}=L$ in $D(L)$, $L^{1/2}$ maps $D(L)$ into $D(L^{1/2})=W^{1,2}$, and $L^{1/2}(W^{1,2})$ contains the range of $L$. So the range of $L^{1/2}$ acting on $W^{1,2}$ is dense in $L^2$. Extending $L^{1/2}$ to $\dot{W}^{1,2}$, we claim that $L^{1/2}$ has closed range in $L^2$. To see this, suppose $\set{L^{1/2}f_n}$ is a Cauchy sequence in $L^2$ with $\lim_{n\to\infty}{L^{1/2}f_n}=y\in L^2$. By \eqref{Kato_reverse}, $\norm{\nabla(f_n-f_m)}_{L^2}\lesssim\norm{L^{1/2}(f_n-f_m)}_{L^2}$, which implies that $\set{f_n}$ is Cauchy in $\dot{W}^{1,2}$. So $f_n\to f\in \dot{W}^{1,2}$. Then we have
\begin{align*}
    \norm{y-L^{1/2}f}_{L^2}&\le\norm{y-L^{1/2}f_n}_{L^2}+\norm{L^{1/2}\br{f_n-f_m}}\\
    &<\epsilon+\norm{\nabla(f_n-f_m)}_{L^2}<2\epsilon
\end{align*}
for $n$ sufficiently large. This implies that $y=L^{1/2}f$, and thus the range of $L^{1/2}$ is closed.
 \end{proof}

A consequence of $D(L^{1/2})=\dot{W}^{1,2}(\Rn)$ is the following representation formula
\begin{lem}\label{L1/2repres Lem}
If $f$, $h\in\dot{W}^{1,2}$ then
\begin{equation*}
    \br{(L^*)^{1/2}f, L^{1/2}h} =\int_{\Rn}A\nabla f\cdot\overline{\nabla h}.
\end{equation*}
\end{lem}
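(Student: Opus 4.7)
The plan is to reduce by density and continuity to test functions $f \in D(L^*)$, $h \in D(L)$, and there to collapse the left-hand side via the adjoint relation $(L^{1/2})^* = (L^*)^{1/2}$ together with the defining identity $L^{1/2}L^{1/2} = L$ on $D(L)$.

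First I would show that both sides of the claimed equation extend to bounded sesquilinear forms on $\dot W^{1,2}(\Rn)\times\dot W^{1,2}(\Rn)$, so it suffices to verify the identity on any dense subspace. For the right-hand side this is immediate: the bound
\[
\abs*{\int_{\Rn} A\nabla f\cdot\overline{\nabla h}} \lesssim \norm{\nabla f}_{L^2}\norm{\nabla h}_{L^2}
\]
is exactly the form bound on $\mathscr{L}$, which was established using Proposition \ref{HardyProp1} to control the BMO anti-symmetric part. For the left-hand side, Cauchy–Schwarz together with the Kato estimate \eqref{Kato} and the corresponding reverse estimate \eqref{Kato_reverse} (applied to both $L$ and $L^*$) gives
\[
\abs{((L^*)^{1/2}f, L^{1/2}h)} \le \norm{(L^*)^{1/2}f}_{L^2}\norm{L^{1/2}h}_{L^2} \lesssim \norm{\nabla f}_{L^2}\norm{\nabla h}_{L^2}.
\]
Since $D(L)$ is a core of the form $t$, it is dense in $W^{1,2}$, and the same for $D(L^*)$; $W^{1,2}(\Rn)$ in turn sits densely inside $\dot W^{1,2}(\Rn)$, so the density reduction is valid.

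For $f\in D(L^*)$ and $h\in D(L)$ the computation proceeds as follows. Since $h\in D(L)$, we have $L^{1/2}h\in D(L^{1/2}) = W^{1,2}$ with $L^{1/2}(L^{1/2}h) = Lh$. The standard identification $(L^{1/2})^* = (L^*)^{1/2}$ for m-sectorial operators (Kato, Chapter VI) allows us to transfer $(L^*)^{1/2}$ across the inner product:
\[
((L^*)^{1/2}f, L^{1/2}h) = (f, L^{1/2}L^{1/2}h) = (f, Lh).
\]
The form definition $(Lh, v) = \int A\nabla h\cdot\overline{\nabla v}$, applied with $v = f\in W^{1,2}$, together with complex conjugation and the fact that $A$ is real-valued, converts $(f, Lh) = \overline{(Lh,f)}$ into the integral on the right-hand side. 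An extension by the continuity argument in the first paragraph completes the proof.

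The main technical obstacle is the identification $(L^{1/2})^* = (L^*)^{1/2}$ and the correct handling of the domains when $L$ is merely m-accretive and sectorial (not self-adjoint or even normal); here one should invoke the functional-calculus framework for m-sectorial operators rather than trying to manipulate square roots by hand. A smaller point worth verifying is that, although $L^{1/2}$ is originally defined on $W^{1,2}$, its extension to $\dot W^{1,2}$ is consistent with the conservation property $e^{-tL}1=1$ established in the previous subsection, so the identity survives the extension across constants in $\dot W^{1,2}$.
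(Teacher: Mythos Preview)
Your proof is correct and follows essentially the same route as the paper: reduce by density (the paper only restricts $h$ to $D(L)$, keeping $f\in\dot W^{1,2}$, but your additional restriction on $f$ is harmless), then use $(L^{1/2})^*=(L^*)^{1/2}$ and $L^{1/2}L^{1/2}=L$ to collapse to $(f,Lh)$, which is the form. Your closing remark about the conservation property $e^{-tL}1=1$ is a red herring---the extension of $L^{1/2}$ to $\dot W^{1,2}$ is purely via the Kato estimate and density, and has nothing to do with the semigroup acting on constants.
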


\begin{proof}
For $f$, $h\in\dot{W}^{1,2}$, $L^{1/2}h$ and $(L^*)^{1/2}f$ belong to $L^2$. So both sides of the equality are well-defined (we use Proposition \ref{HardyProp1} for the right-hand side). Since the domain of $L$ is dense in $W^{1,2}$ and thus dense in $\dot{W}^{1,2}$, it suffices to show the equality holds for $h\in D(L)$ and $f\in\dot{W}^{1,2}$. By \eqref{sqrt_defid},
\[
\br{ (L^*)^{1/2}f, L^{1/2}h} =\br{ f, L^{1/2}L^{1/2}h} = \br{f, Lh},
\]
which equals to $\int_{\Rn}\nabla f\cdot\overline{A\nabla h}$ by construction of $L$.
\end{proof}

Another implication of \eqref{Kato_reverse} and \eqref{sqroot_isomorphism} is the $L^2$ boundedness of $\nabla L^{-1/2}$, the Riesz transform associated to $L$. In fact, since $L^{1/2}$ is an isomorphism from $\dot W^{1,2}$ to $L^2$, letting $f:= L^{-1/2}g\in \dot{W}^{1,2}$ in \eqref{Kato_reverse} one obtains
\begin{equation}\label{Riesztrans_L2}
    \norm{\nabla L^{-1/2}g}_{L^2}\lesssim\norm{g}_{L^2}\qquad\forall\, g\in L^2.
\end{equation}

Note that by the formula \eqref{squareroot_formula}, we immediately get the following formula for $L^{-1/2}$
\begin{equation*}
    L^{-1/2}g=\pi^{-1/2}\int_0^{\infty}e^{-tL}g\frac{dt}{\sqrt{t}}, \qquad\forall\, g\in R(L).
\end{equation*}

\begin{lem}\label{L-1/2formula lem nge3}
Let $n\ge 3$, and let $2^*=\frac{2n}{n-2}$. Let $1<p<\infty$ and $p\ne2$. Then for all $g\in L^2\cap L^p$,
\begin{equation}\label{L-1/2formula}
    L^{-1/2}g=\pi^{-1/2}\int_0^{\infty}e^{-tL}g\frac{dt}{\sqrt{t}}
\end{equation}
is valid and converges in $L^p+L^{2^*}$ if $p\ne 2^*$, and in $L^{2^*}+L^{p-\epsilon}$ if $p=2^*$, where $\epsilon>0$ is arbitrarily small.
\end{lem}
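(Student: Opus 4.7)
My plan is to split the integral at $t=1$, control the small-time piece in $L^p$ and the large-time piece in $L^{2^*}$, then pass the formula from $R(L)$ to all of $L^2\cap L^p$ by density.

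For the small-time piece, Lemma~\ref{etL_Lp} gives $\norm{e^{-tL}g}_{L^p}\lesssim\norm{g}_{L^p}$ uniformly in $t>0$, so $\pi^{-1/2}\int_0^1 e^{-tL}g\,t^{-1/2}dt$ is a Bochner integral absolutely convergent in $L^p$ with norm $\lesssim\norm{g}_{L^p}$. For the large-time piece, Proposition~\ref{etL_Lp-L2Prop}(3) yields $\norm{e^{-tL}g}_{L^q}\lesssim t^{-\gamma_q/2}\norm{g}_{L^2}$, which is integrable against $t^{-1/2}$ on $[1,\infty)$ exactly when $\gamma_q>1$, i.e.\ $q>2^*$; the critical $q=2^*$ narrowly fails with a logarithmic divergence. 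To reach $L^{2^*}$ itself, I would first restrict to $g=Lf$ with $f\in D(L)$ and integrate by parts via $e^{-tL}Lf=-\Dt e^{-tL}f$:
\begin{equation*}
\int_1^T e^{-tL}(Lf)\,\frac{dt}{\sqrt t}=e^{-L}f-T^{-1/2}e^{-TL}f-\tfrac12\int_1^T e^{-tL}f\,t^{-3/2}dt.
\end{equation*}
Since $D(L)\subset W^{1,2}\hookrightarrow L^{2^*}$ for $n\ge 3$ and $(e^{-tL})_{t>0}$ is $L^{2^*}$-bounded, the boundary term at $T$ vanishes in $L^{2^*}$, while $\int_1^T e^{-tL}f\,t^{-3/2}dt$ converges absolutely in $L^{2^*}$ because $t^{-3/2}$ is integrable at infinity. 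Together with \eqref{squareroot_formula}, this proves the identity for $g\in R(L)$.

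To pass from $g\in R(L)$ to arbitrary $g\in L^2\cap L^p$ with $p\ne 2^*$, I would establish an $L^p$-analogue of Lemma~\ref{RL_lem} by the same resolvent argument, now invoking the $L^p$-boundedness of the semigroup, so that $R(L)$ is dense in $L^2\cap L^p$. The a priori bound $\norm{L^{-1/2}g}_{L^{2^*}}\lesssim\norm{\nabla L^{-1/2}g}_{L^2}\lesssim\norm{g}_{L^2}$ (Sobolev embedding combined with Kato~\eqref{Kato_reverse}) together with the uniform $L^p$-bound on the small-time piece lets the formula extend by continuity, with convergence in $L^p+L^{2^*}$. For the critical case $p=2^*$, absolute convergence of the tail in $L^{2^*}$ is lost, so I would use the interpolation $L^2\cap L^{2^*}\subset L^{2^*-\epsilon}$ for arbitrarily small $\epsilon>0$ and invoke the already-proved case with $p'=2^*-\epsilon<2^*$, which yields convergence in $L^{2^*}+L^{2^*-\epsilon}$.

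The main obstacle is precisely the critical exponent $\gamma_{2^*}=1$, which places the tail exactly at the boundary of absolute integrability in $L^{2^*}$. The integration-by-parts trick is the key device, trading $t^{-1/2}$ for the integrable $t^{-3/2}$ at the cost of requiring $g\in R(L)$; propagating this to arbitrary $g\in L^2\cap L^p$ through a careful $L^p$-density argument is where most of the remaining technical work lies.
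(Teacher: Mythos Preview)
Your density step has a genuine gap. The integration-by-parts identity bounds the tail in $L^{2^*}$ only by a constant times $\norm{f}_{L^{2^*}}$, and this is not controlled by any norm of $g=Lf$ in $L^2\cap L^p$ (indeed $\norm{f}_{L^{2^*}}\lesssim\norm{L^{1/2}f}_{L^2}=\norm{L^{-1/2}g}_{L^2}$, which is not dominated by $\norm{g}_{L^2}$). Hence you have no bound on the partial integrals $\int_{T_1}^{T_2}e^{-tL}g\,t^{-1/2}\,dt$ that is stable under approximation $g_n\to g$, so the Cauchy property of the improper integral does not pass to the limit. Your a priori bound $\norm{L^{-1/2}g}_{L^{2^*}}\lesssim\norm{g}_{L^2}$ lets you identify the value of the integral with $L^{-1/2}g$ \emph{once you know it converges}, but it does not itself furnish convergence.

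The paper avoids this by proving absolute convergence of the tail directly for every $g\in L^2\cap L^p$, via Sobolev embedding applied to $e^{-tL}g$ combined with the $L^p$--$L^2$ bounds for $\sqrt{t}\,\nabla e^{-tL}$. For $1<p<2$ one has $\norm{e^{-tL}g}_{L^{2^*}}\lesssim\norm{\nabla e^{-tL}g}_{L^2}\lesssim t^{-(1+\gamma_p)/2}\norm{g}_{L^p}$, integrable against $t^{-1/2}$ on $[1,\infty)$ since $\gamma_p>0$; analogous gradient--Sobolev devices place the tail in $L^p$ (resp.\ $L^{p-\eps}$) when $2<p<2^*$ (resp.\ $p=2^*$), and for $p>2^*$ the $L^2$--$L^p$ bound on $e^{-tL}$ already gives $\gamma_p>1$. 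With convergence in hand for all $g$, density is used only at the very end to identify the limit with $L^{-1/2}g$ --- precisely the role your a priori Sobolev/Kato bound is suited for.
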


\begin{proof}
Let $g\in L^2\cap L^p$, and write
\[
\int_0^{\infty}e^{-tL}g\frac{dt}{\sqrt{t}}=\int_0^1e^{-tL}g\frac{dt}{\sqrt{t}}+\int_1^{\infty}e^{-tL}g\frac{dt}{\sqrt{t}}=: I+II.
\]
We first consider $1<p<2$. By the $L^p$ boundedness of the semigroup, $I$ converges in $L^p$ norm.
For $II$, note that we have
\begin{equation}\label{L-1/2 nge3 p<2}
    \norm{e^{-tL}g}_{L^{2^*}}\lesssim\norm{\nabla e^{-tL}g}_{L^2}\lesssim t^{-\frac{1+\gamma_p}{2}}\norm{g}_{L^p},
\end{equation}
which is a consequence of the $L^p-L^2$ bound of $(\sqrt{t}\nabla e^{-tL})_{t>0}$ and Sobolev embedding.
So the integral converges in $L^p+L^{2^*}$ norm.

For $p>2$, we consider the following cases.
\begin{enumerate}
    \item $p>2^*$. By the $L^{2^*}$ bound of $(e^{-tL})_{t>0}$, $\norm{I}_{L^{2^*}}\lesssim\norm{g}_{L^{2^*}}\le\norm{g}_{L^2\cap L^p}$, and thus $I$ converges in $L^{2^*}$. $II$ converges in $L^p$ because of the $L^2-L^p$ boundedness of $(e^{-tL})_{t>0}$. Note that $p>2^*$ gives $\gamma_p>1$, and thus $\int_1^{\infty}t^{-\frac{1+\gamma_p}{2}}dt<\infty$.
    \item $2<p<2^*$. For $I$, we use the $L^p-L^{2^*}$ boundedness of the semigroup to get
    \[
    \norm{I}_{L^{2^*}}\lesssim\int_0^1t^{-\frac{\gamma_{p2^*}+1}{2}}dt\norm{g}_{L^p}\lesssim\norm{g}_{L^p},
    \]
    where we have used $\gamma_{p2^*}=\frac{n}{p}-\frac{n-2}{n}<1$. That is, $I$ converges in $L^{2^*}$.
    Let $p_*=\frac{np}{n+p}$ be the reverse Sobolev exponent of $p$. Then by Sobolev embedding and $L^{p_*}-L^2$ boundedness of $(\sqrt{t}\nabla e^{-tL})_{t>0}$ (note that $p_*<2$), one gets
    \[
    \norm{e^{-tL}g}_{L^p}\lesssim\norm{\nabla e^{-tL}g}_{L^{p_*}}\lesssim t^{-\frac{1+\gamma_{p_*2}}{2}}\norm{g}_{L^2},
    \]
    which yields $II$ converges in $L^p$.
    \item $p=2^*$. One can see $I$ converges in $L^{2^*}$ from the $L^{2^*}$ boundedness of $(e^{-tL})_{t>0}$. For $II$, the Sobolev embedding and $L^{(p-\epsilon)_*}-L^2$ boundedness of $(\sqrt{t}\nabla e^{-tL})_{t>0}$ imply that $II$ converges in $L^{p-\epsilon}$, for arbitrary small $\epsilon>0$.
\end{enumerate}

Now it remains to show that the equality \eqref{L-1/2formula} holds for any $g\in L^2\cap L^p$.
By Lemma \ref{RL_lem} and the remark after it, $R(L)$ contains a dense subset of $L^2\cap L^p$. Therefore, there exists $\set{g_n}\subset R(L)\cap L^p$ such that $g_n\to g\in L^2\cap L^p$. Then \[L^{-1/2}g_n=\pi^{-1/2}\int_0^{\infty}e^{-tL}g_n\frac{dt}{\sqrt{t}}.\]
So from the convergence argument above, one can see that $\set{L^{-1/2}g_n}$ is a Cauchy sequence in $L^p+L^{2^*}$ if $p\ne2^*$. Suppose $L^{-1/2}g_n\to f\in L^p+L^{2^*}$. Since $L^{1/2}$ is an isomorphism from $\dot{W}^{1,2}$ to $L^2$, $L^{-1/2}g$ is well-defined. We compute
\begin{align*}
    \norm{L^{-1/2}g-f}_{L^p+L^{2^*}}&\le\norm{L^{-1/2}g-L^{-1/2}g_n}_{L^p+L^{2^*}}+\norm{L^{-1/2}g_n-f}_{L^p+L^{2^*}}\nonumber\\
    &\le \norm{L^{-1/2}g-L^{-1/2}g_n}_{L^{2^*}}+\norm{L^{-1/2}g_n-f}_{L^p+L^{2^*}}.
\end{align*}
And by Sobolev embedding and \eqref{Riesztrans_L2},
\[
 \norm{L^{-1/2}g-L^{-1/2}g_n}_{L^{2^*}}\lesssim\norm{\nabla L^{-1/2}(g_n-g)}_{L^2}\lesssim\norm{g_n-g}_{L^2}.
\]
Thus, we have proved that $L^{-1/2}g=f$. When $p=2^*$, $\set{L^{-1/2}g_n}$ is a Cauchy sequence in $L^{p-\epsilon}+L^{2^*}$, and a similar argument gives the same result.
 \end{proof}

\begin{lem}\label{L-1/2formula lem n=2}
Let $n=2$. Let $1<p<\infty$ and $p\ne2$. Then for all $g\in L^2\cap L^p$,
\begin{equation*}
    L^{-1/2}g=\pi^{-1/2}\int_0^{\infty}e^{-tL}g\frac{dt}{\sqrt{t}}
\end{equation*}
is valid and converges in $L^p(\Real^2)+BMO(\Real^2)$.
\end{lem}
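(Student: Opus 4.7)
The plan is to mirror the proof of Lemma~\ref{L-1/2formula lem nge3}, replacing the Sobolev embedding $\dot W^{1,2}(\Rn)\hookrightarrow L^{2^*}(\Rn)$ by its two-dimensional counterpart $\dot W^{1,2}(\Real^2)\hookrightarrow BMO(\Real^2)/\text{constants}$ (Poincar\'e/John--Nirenberg). Write
\[
\int_0^\infty e^{-tL}g\,\frac{dt}{\sqrt t}=I+II,\qquad I=\int_0^1,\qquad II=\int_1^\infty,
\]
and observe that $I$ converges absolutely in $L^p$ for all admissible $p$ by the uniform $L^p$-boundedness of $(e^{-tL})_{t>0}$; this furnishes the $L^p$ component of the eventual decomposition.

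For $1<p<2$, $II$ is handled by a direct absolute estimate: the 2D Sobolev embedding and the $L^p$--$L^2$ boundedness of $(\sqrt t\,\nabla e^{-tL})_{t>0}$ yield
\[
\|e^{-tL}g\|_{BMO}\lesssim \|\nabla e^{-tL}g\|_{L^2}\lesssim t^{-(1+\gamma_p)/2}\|g\|_{L^p}.
\]
With $n=2$ we have $(1+\gamma_p)/2=1/p>1/2$, so $\int_1^\infty t^{-1/p-1/2}\,dt<\infty$ and $II$ converges absolutely in $BMO(\Real^2)$ modulo constants.

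For $p>2$ the naive absolute bound only produces $\int_1^\infty dt/t=\infty$, so the convergence of $II$ must be obtained via a Cauchy argument on approximations. Choose $\{g_n\}\subset R(L)\cap L^p$ with $g_n\to g$ in $L^2\cap L^p$ (dense by Lemma~\ref{RL_lem} and its remark) and write $g_n=Lf_n$, $f_n\in D(L)$. For such $g_n$, the identity $L^{-1/2}g_n=\pi^{-1/2}\int_0^\infty e^{-tL}g_n\,dt/\sqrt t$ is valid in $L^2$; integration by parts in $t$ using $\nabla e^{-tL}g_n=-\partial_t\nabla e^{-tL}f_n$ together with $\|\nabla e^{-tL}f_n\|_{L^2}\lesssim t^{-1/2}\|f_n\|_{L^2}$ shows that $\int_1^\infty\nabla e^{-tL}g_n\,dt/\sqrt t$ converges absolutely in $L^2$ (the boundary term $\nabla e^{-TL}f_n\cdot T^{-1/2}$ has $L^2$-norm $\lesssim T^{-1}\|f_n\|_{L^2}\to 0$ and the residual integrand behaves like $t^{-2}\|f_n\|_{L^2}$). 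Hence $II_n$ converges in $BMO/\text{const}$ and the full integral for each $g_n$ converges in $L^p+BMO$ to $\pi^{1/2}L^{-1/2}g_n$. Passing to the limit, $\{L^{-1/2}g_n\}$ is Cauchy in $L^p+BMO$: the $L^p$-parts are Cauchy in $L^p$ by the $L^p$-boundedness of $e^{-tL}$, and the $BMO$-parts are controlled by Kato's $L^2$ Riesz-transform bound~\eqref{Riesztrans_L2}, which yields $\|L^{-1/2}(g_n-g_m)\|_{BMO}\lesssim \|\nabla L^{-1/2}(g_n-g_m)\|_{L^2}\lesssim \|g_n-g_m\|_{L^2}\to 0$. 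The common limit is identified with $L^{-1/2}g$ via the isomorphism $L^{1/2}:\dot W^{1,2}\to L^2$.

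The main obstacle is precisely the $p>2$ case: the integration-by-parts bound for $II_n$ carries a factor $\|f_n\|_{L^2}=\|L^{-1}g_n\|_{L^2}$ which is not controlled by $\|g_n\|_{L^2}$ in dimension two (since $L^{-1}$ is unbounded on $L^2(\Real^2)$), so the Cauchy step cannot be closed by pointwise-in-$t$ $L^2$ estimates alone. The resolution is to invoke the global $L^2$ Kato bound~\eqref{Riesztrans_L2} for $\nabla L^{-1/2}$, applied directly to the differences $g_n-g_m\in L^2$; this bypasses any reference to $L^{-1}$ and, together with the Sobolev embedding, delivers the required uniform control on the $BMO$-component.
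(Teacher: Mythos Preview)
For $1<p<2$ your argument coincides with the paper's. The disagreement is at $p>2$, where you take a more convoluted route than necessary and leave a gap.

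The paper for $p>2$ simply \emph{reverses} the roles of $I$ and $II$. Using the $L^p$--$L^\infty$ hypercontractivity of the semigroup (available from the Gaussian bounds, via Proposition~\ref{etL_Lp-L2Prop}) one gets
\[
\norm{I}_{L^\infty}\lesssim\int_0^1 t^{-(1+\gamma_{p\infty})/2}\,dt\,\norm{g}_{L^p}\lesssim\norm{g}_{L^p},
\]
since $\gamma_{p\infty}=2/p<1$; so $I$ converges absolutely in $L^\infty\subset BMO$. The tail $II$ is then placed in $L^p$ via the Sobolev embedding $\norm{e^{-tL}g}_{L^p}\lesssim\norm{\nabla e^{-tL}g}_{L^{p_*}}$ (with $p_*=2p/(2+p)<2$) together with the $L^{p_*}$--$L^2$ bounds for $\sqrt t\,\nabla e^{-tL}$. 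Both pieces are direct absolute estimates; no approximation by $R(L)$ is needed. You overlooked this swap.

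By keeping $I$ in $L^p$ and forcing $II$ into $BMO$ you are led to approximate by $g_n\in R(L)$, and here there is a genuine gap: you establish that for each $g_n$ the integral converges and that the \emph{values} $L^{-1/2}g_n$ form a Cauchy sequence in $L^p+BMO$, but the lemma asserts convergence of the improper integral $\int_1^T e^{-tL}g\,dt/\sqrt t$ as $T\to\infty$ for the given $g$. Your Cauchy control on $L^{-1/2}(g_n-g_m)$ does not transfer to the tail pieces $\int_T^{T'} e^{-tL}g\,dt/\sqrt t$; passing from $g_n$ to $g$ there would require a uniform-in-$(T,T')$ bound on $\int_T^{T'} e^{-tL}(g-g_n)\,dt/\sqrt t$, and your own analysis shows the only available absolute estimate produces a divergent $\int t^{-1}\,dt$. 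The swap $I\to BMO$, $II\to L^p$ sidesteps the issue entirely.
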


\begin{proof}
As in the proof of Lemma \ref{L-1/2formula lem nge3}, we let $g\in L^2\cap L^p$ and write
\[
\int_0^{\infty}e^{-tL}g\frac{dt}{\sqrt{t}}=\int_0^1e^{-tL}g\frac{dt}{\sqrt{t}}+\int_1^{\infty}e^{-tL}g\frac{dt}{\sqrt{t}}=: I+II.
\]
When $1<p<2$, the same argument in the proof of Lemma \ref{L-1/2formula lem nge3} for $I$ carries over to the 2-d setting and shows that $I$ converges in $L^p(\Real^2)$. By replacing $L^{2^*}$ with $BMO$ in \eqref{L-1/2 nge3 p<2}, one gets $II$ converges in $BMO(\Real^2)$.

When $p>2$, the $L^p-L^{\infty}$ boundedness of the semigroup gives
\[
\norm{I}_{L^\infty}\lesssim\int_0^1t^{-\frac{1+\gamma_{p\infty}}{2}}dt\norm{g}_{L^p}\lesssim\norm{g}_{L^p},
\]
where we have used $\gamma_{p\infty}=\frac{2}{p}<1$. This implies that $I$ converges in $L^\infty$, and thus in $BMO$. $II$ converges in $L^p$ because of the Sobolev embedding and the $L^{p_*}-L^2$ boundedness of $(\sqrt{t}\nabla e^{-tL})_{t>0}$. Note that $p_*=\frac{2p}{2+p}<2$.

We have proved that for any $g\in L^2\cap L^p$, $\int_0^{\infty}e^{-tL}g\frac{dt}{\sqrt{t}}$ converges in $L^p+BMO$, and it remains to show the improper integral equals to $L^{-1/2}g$. To see this, one only needs to replace $L^{2^*}$ with $BMO$ in the corresponding proof of Lemma \ref{L-1/2formula lem nge3}.
 \end{proof}

\begin{cor}\label{Riesztransform_Rep cor}
We have the following representation for the Riesz transform:
\[
\br{\nabla L^{-1/2}f,\bd v}=\pi^{-1/2}\lim_{\epsilon\to 0}\int_{\Rn}\int_{\epsilon}^{\infty}\nabla e^{-tL}f  \,\frac{dt}{\sqrt{t}}\,\,\overline{\bd v}\,dx
\]
for all $f\in L^2\cap L^p$, and for all $\mathbb{C}^n$- valued $\bd v\in C_0^{\infty}$.

\end{cor}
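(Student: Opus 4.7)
The plan is to integrate by parts in the spatial variable against $\bd v \in C_0^\infty$ and then invoke the representation of $L^{-1/2}f$ from Lemma \ref{L-1/2formula lem nge3} (or Lemma \ref{L-1/2formula lem n=2} when $n=2$). A key observation used throughout is that $\divg \bd v \in C_0^\infty(\Rn)$ satisfies $\int_{\Rn}\divg \bd v\,dx = 0$, so it lies in every $L^q(\Rn)$ and, by the mean-zero property combined with compact support, also in $\mathcal{H}^1(\Rn)$.

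Fixing $\epsilon > 0$, the first task will be to justify Fubini's theorem in the form
\[
\int_{\Rn}\int_\epsilon^\infty \nabla e^{-tL}f \,\frac{dt}{\sqrt{t}}\cdot \overline{\bd v}\,dx
= \int_\epsilon^\infty \int_{\Rn} \nabla e^{-tL}f \cdot \overline{\bd v}\,dx\, \frac{dt}{\sqrt{t}}.
\]
I will verify absolute integrability by splitting $(\epsilon,\infty) = (\epsilon,1)\cup [1,\infty)$: on $(\epsilon,1)$, Cauchy--Schwarz together with $\|\nabla e^{-tL}f\|_{L^2}\lesssim t^{-1/2}\|f\|_{L^2}$ and the compact support of $\bd v$ provide an integrable majorant, while for $t \ge 1$, $L^{p'}$--$L^p$ pairing on $\supp \bd v$ combined with the $L^p$--$L^{2^*}$ semigroup estimates (as in the proof of Lemma \ref{L-1/2formula lem nge3}) and Sobolev embedding yields decay $\lesssim t^{-(1+\gamma_p)/2}$, integrable against $dt/\sqrt{t}$ because $p\neq 2$. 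The analogous $n=2$ bookkeeping uses the $L^p$--$BMO$ part of Lemma \ref{L-1/2formula lem n=2}. Integration by parts in $x$ (valid because $\bd v\in C_0^\infty$) then gives
\[
\int_\epsilon^\infty \int_{\Rn} \nabla e^{-tL}f \cdot \overline{\bd v}\,dx\, \frac{dt}{\sqrt{t}}
= -\int_\epsilon^\infty (e^{-tL}f,\divg \bd v)\,\frac{dt}{\sqrt{t}}
= -\Big(\int_\epsilon^\infty e^{-tL}f\,\frac{dt}{\sqrt{t}},\,\divg \bd v\Big),
\]
where the last equality is Fubini once more, now as continuity of the pairing with the fixed $\divg \bd v$ in the topology of $L^p+L^{2^*}$ (resp.\ $L^p+BMO$).

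Sending $\epsilon \to 0$, Lemma \ref{L-1/2formula lem nge3} (resp.\ Lemma \ref{L-1/2formula lem n=2}) asserts that $\int_\epsilon^\infty e^{-tL}f\,\frac{dt}{\sqrt{t}} \to \pi^{1/2}L^{-1/2}f$ in $L^p+L^{2^*}$ (resp.\ $L^p+BMO$); since $\divg \bd v$ lies in the dual of each summand (employing $BMO$--$\mathcal{H}^1$ duality in the two-dimensional case), the right-hand side above tends to $-\pi^{1/2}(L^{-1/2}f,\divg \bd v)$. The Kato-type estimates \eqref{Kato} and \eqref{Kato_reverse} make $L^{1/2}\colon \dot{W}^{1,2}\to L^2$ an isomorphism, so $L^{-1/2}f\in \dot{W}^{1,2}$ with $\nabla L^{-1/2}f\in L^2$; the mean-zero property of $\divg \bd v$ ensures that integration by parts for $L^{-1/2}f$ is independent of the representative chosen for its equivalence class, and produces $-(L^{-1/2}f,\divg \bd v)=(\nabla L^{-1/2}f,\bd v)$. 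Multiplying through by $\pi^{-1/2}$ completes the proof.

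The principal obstacle I anticipate is the Fubini bookkeeping---specifically, verifying absolute integrability uniformly in $\epsilon$ at large $t$ via the correct $L^p$--$L^q$ semigroup estimate, and then reconciling the $L^p+L^{2^*}$ (or $L^p+BMO$) representative of $L^{-1/2}f$ produced by the improper integral with the $\dot{W}^{1,2}$ equivalence class against which $\bd v$ is paired. Both reconciliations are enabled by the fact that $\int_{\Rn}\divg \bd v\,dx = 0$.
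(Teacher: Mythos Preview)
Your proposal is correct and follows essentially the same route as the paper: integrate by parts in $x$ to move the gradient onto $\bd v$, invoke Lemma~\ref{L-1/2formula lem nge3} (or Lemma~\ref{L-1/2formula lem n=2}) for the convergence of the truncated integral to $\pi^{1/2}L^{-1/2}f$ in $L^p+L^{2^*}$ (resp.\ $L^p+BMO$), and pair with $\divg\bd v$, which lies in the predual. The only cosmetic difference is that the paper truncates at $1/\epsilon$ rather than $\infty$ (so Fubini on a finite interval is immediate), and then handles the tail $\int_1^\infty$ separately via the $L^p$--$L^2$ (or, for $p>2$, the $L^2$--$L^q$ with $q\in(2,2+\epsilon_1)$) bound for $\sqrt{t}\nabla e^{-tL}$; your large-$t$ decay justification serves the same purpose.
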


\begin{proof}
Since we have observed in Lemma \ref{L-1/2formula lem nge3} and \ref{L-1/2formula lem n=2} that the improper integral defining $L^{-1/2}f$ converges in
$L^p + L^{2^*}$ or in $L^{p-\epsilon}+L^{2^*}$ if $p=2^*$, or in $L^p + BMO$ if $n=2$, and since $\divg {\bd v}$ is compactly supported and belongs to every
$L^p$, and to the Hardy space $\mathcal{H}^1$, we can write
\begin{align}
    \pi^{1/2}&\br{\nabla L^{-1/2}f,\bd v}=\pi^{1/2}\int_{\Rn}\nabla L^{-1/2}f\,\,\overline{\bd v}\,dx=-\pi^{1/2}\int_{\Rn}L^{-1/2}f\,\,\overline{\divg\bd v}\,dx\nonumber\\
    &=-\lim_{\epsilon\to 0}\int_{\Rn}\int_{\epsilon}^{\frac{1}{\epsilon}}e^{-tL}f  \,\frac{dt}{\sqrt{t}}\,\,\overline{\divg\bd v}\,dx
    =\lim_{\epsilon\to 0}\int_{\epsilon}^{\frac{1}{\epsilon}}\int_{\Rn}\nabla e^{-tL}f  \,\,\overline{\bd v}\,dx\frac{dt}{\sqrt{t}}\nonumber\\
    &=\lim_{\epsilon\to 0}\int_{\epsilon}^{1}\int_{\Rn}\nabla e^{-tL}f  \,\,\overline{\bd v}\,dx\frac{dt}{\sqrt{t}}+\int_1^{\infty}\int_{\Rn}\nabla e^{-tL}f  \,\,\overline{\bd v}\,dx\frac{dt}{\sqrt{t}}\label{inftyconverge}\\
    &=\lim_{\epsilon\to 0}\int_{\Rn}\int_{\epsilon}^1 \nabla e^{-tL}f  \,\frac{dt}{\sqrt{t}}\,\,\overline{\bd v}\,dx+\int_{\Rn}\int_1^{\infty} \nabla e^{-tL}f  \,\frac{dt}{\sqrt{t}}\,\,\overline{\bd v}\,dx\nonumber\\
    &=\lim_{\epsilon\to 0}\int_{\Rn}\int_{\epsilon}^{\infty} \nabla e^{-tL}f  \,\frac{dt}{\sqrt{t}}\,\,\overline{\bd v}\,dx\nonumber
\end{align}
where we have used the $L^p-L^2$ bound (or the $L^2-L^p$ bound if $p>2$) of $(\sqrt{t}\nabla e^{-tL})_{t>0}$, so the second improper integral converges, and we can then exchange the order of integration.
 \end{proof}

We now show that the limit can be taken inside the integral in Corollary \ref{Riesztransform_Rep cor} and thus we have the following formula for the Riesz transform associated to $L$:
\begin{prop}\label{Riesztransform prop}
Let $1<p<\infty$ and $p\ne 2$, then
\[
\nabla L^{-1/2}(f)=\frac{1}{\sqrt{\pi}}\int_0^\infty\frac{\nabla e^{-tL}f}{\sqrt{t}}dt,\qquad\forall\, f\in L^2\cap L^p.
\]
\end{prop}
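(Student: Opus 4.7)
The plan is to start from Corollary \ref{Riesztransform_Rep cor}, which already delivers the identity tested against every $\bd v \in C_0^\infty(\Rn)$, and to upgrade that distributional statement to an $L^2$-norm equality. Concretely, I will read $\int_0^\infty \nabla e^{-tL}f\,dt/\sqrt{t}$ as $\lim_{\epsilon\to 0^+} F_\epsilon$, where $F_\epsilon := \int_\epsilon^\infty \nabla e^{-tL}f\,dt/\sqrt{t}$, and show that $F_\epsilon$ converges in $L^2(\Rn)$ to $\sqrt{\pi}\,\nabla L^{-1/2}f \in L^2$ (which lies in $L^2$ by Kato's estimate \eqref{Riesztrans_L2}). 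Once strong $L^2$-convergence is known, Corollary \ref{Riesztransform_Rep cor} pins the $L^2$-limit down as $\sqrt{\pi}\,\nabla L^{-1/2}f$, which is the claim.

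The bulk of the argument is to verify $L^2$-convergence of $F_\epsilon$ on a dense subspace and then transfer by continuity. A convenient choice is $D := \{Lg : g \in D(L^2)\} \cap L^p$, dense in $L^2$ because every $f \in L^2 \cap L^p$ can be approximated by $e^{-\delta L}\tilde f_n$ with $\tilde f_n \in R(L)\cap L^p$ supplied by Lemma \ref{RL_lem} and $\delta\to 0$, invoking the density of $D(L^\infty)$ in $D(L)$ for analytic semigroups together with the $L^p$-boundedness of $e^{-\delta L}$. For $f = Lg \in D$, split $F_\epsilon = \int_\epsilon^1 + \int_1^\infty$. The tail converges absolutely in $L^2$ when $1<p<2$, or in $L^p$ (or an intermediate $L^q$ by interpolation) when $p>2$, via Proposition \ref{sqrttnabla_L2-LpProp} and its $L^p$-$L^2$ counterpart: $\|\nabla e^{-tL}f\|_{L^q}/\sqrt{t}\lesssim t^{-1-\gamma/2}$ with $\gamma > 0$. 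For the head, exploit $\nabla e^{-tL}(Lg) = -\partial_t(\nabla e^{-tL}g)$ and integrate by parts in $t$:
\[
\int_\epsilon^1 \nabla e^{-tL}f\,\frac{dt}{\sqrt{t}} \;=\; -\nabla e^{-L}g + \frac{\nabla e^{-\epsilon L}g}{\sqrt{\epsilon}} - \frac{1}{2}\int_\epsilon^1 \frac{\nabla e^{-tL}g}{t^{3/2}}\,dt.
\]
Writing $\nabla e^{-\epsilon L}g = \nabla g + (\nabla e^{-\epsilon L}g - \nabla g)$ and likewise in the integrand, the $1/\sqrt{\epsilon}$ singularities cancel exactly, leaving $\nabla g$ plus two remainders: $(\nabla e^{-\epsilon L}g - \nabla g)/\sqrt{\epsilon}$, which tends to $0$ in $L^2$, and $\int_\epsilon^1 (\nabla e^{-tL}g - \nabla g)\,t^{-3/2}\,dt$, which is absolutely convergent in $L^2$ on $[0,1]$. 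Both facts rest on the Taylor expansion $\nabla e^{-tL}g - \nabla g = -t\nabla f + o(t)$ in $L^2$ as $t\to 0^+$, which is valid because $f = Lg \in D(L)$ and $e^{-sL}$ is strongly continuous on $D(L)$ in the graph norm, which in turn controls the $W^{1,2}$ norm since $\|\nabla h\|_{L^2}\lesssim \|L^{1/2}h\|_{L^2}\lesssim (\|h\|_{L^2}+\|Lh\|_{L^2})$ via Kato.

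To finish, for arbitrary $f \in L^2 \cap L^p$ approximate by $f_n \in D$ with $f_n \to f$ in $L^2$, and pass to the limit using Kato once more: $\|\nabla L^{-1/2}(f_n - f)\|_{L^2}\lesssim \|f_n - f\|_{L^2}\to 0$, so the identity transports to $f$. The main obstacle is precisely the conditional nature of the improper integral on the head: $\|\nabla e^{-tL}f\|_{L^2}/\sqrt{t}$ grows like $t^{-1}$ near $0$, so absolute $L^2$-integrability fails and the proof must exploit the delicate $1/\sqrt{\epsilon}$-cancellation between the boundary contribution $\nabla e^{-\epsilon L}g/\sqrt{\epsilon}$ and the subintegral $\int_\epsilon^1 \nabla e^{-tL}g\,t^{-3/2}\,dt$. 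It is this cancellation that forces the dense subspace to be taken large enough ($g \in D(L^2)$, so that $\nabla f \in L^2$) to validate the Taylor expansion.
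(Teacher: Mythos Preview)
Your integration-by-parts computation on the dense set $D=\{Lg:g\in D(L^2)\}\cap L^p$ is correct and the $1/\sqrt{\epsilon}$ cancellation works exactly as you say (indeed only $\|\nabla e^{-tL}g-\nabla g\|_{L^2}=O(t)$ is needed, and that follows since $Lg\in D(L)\subset D(L^{1/2})$ gives $\nabla e^{-sL}(Lg)\to\nabla(Lg)$ in $L^2$).  There is, however, a genuine gap in the final density step, and it cannot be repaired with the ingredients you list.

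The issue is this: you establish, for $f_n\in D$, that $F_\epsilon(f_n)\to \sqrt{\pi}\,\nabla L^{-1/2}f_n$ in $L^2$, and then approximate an arbitrary $f\in L^2\cap L^p$ by such $f_n$.  Kato indeed gives $\nabla L^{-1/2}f_n\to\nabla L^{-1/2}f$ in $L^2$, but to conclude that $F_\epsilon(f)$ converges (let alone to the right limit) you must control $\|F_\epsilon(f)-F_\epsilon(f_n)\|_{L^2}$ \emph{uniformly in $\epsilon$}.  For fixed $\epsilon$ one has $\|F_\epsilon(h)\|_{L^2}\lesssim \epsilon^{-\gamma_p/2}\|h\|_{L^p}$ (when $p<2$), and the constant blows up as $\epsilon\to0$; the rate at which $F_\epsilon(f_n)$ becomes Cauchy depends on $\|\nabla(Lg_n)\|_{L^2}$, not on $\|f_n\|_{L^2}$ or $\|f_n\|_{L^p}$.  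So the triangle-inequality argument $\|F_\epsilon(f)-G(f)\|\le\|F_\epsilon(f-f_n)\|+\|F_\epsilon(f_n)-G(f_n)\|+\|G(f_n-f)\|$ does not close.  In short, knowing that a sequence of operators converges strongly on a dense set and that the (candidate) limit is bounded is \emph{not} enough to deduce strong convergence everywhere---one needs a uniform bound on the approximants, which is precisely what the paper supplies via the McIntosh--Yagi square-function estimate $\int_0^\infty\|(tL)^{1/4}e^{-tL}f\|_{L^2}^2\,dt/t\lesssim\|f\|_{L^2}^2$.  The paper's proof writes $F_m(f)-F_k(f)=\nabla L^{-1/2}\!\int_{1/m}^{1/k}L^{1/2}e^{-tL}f\,dt/\sqrt{t}$, applies Kato, and then controls the remaining integral by pairing with $g\in L^2$ and using Cauchy--Schwarz against the $(tL)^{1/4}$ and $(tL^*)^{1/4}$ square functions.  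This produces a Cauchy estimate depending only on $\|f\|_{L^2}$, which is exactly what your density argument is missing.

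A secondary point: for $p>2$ you assert the tail $\int_1^\infty$ converges in $L^p$ (or an intermediate $L^q$), but your stated goal is $L^2$-convergence of $F_\epsilon$.  On the dense set this is easily fixed---the same integration by parts on $[1,N]$ gives $-\nabla e^{-NL}g/\sqrt{N}+\nabla e^{-L}g-\tfrac12\int_1^N\nabla e^{-tL}g\,t^{-3/2}dt$, which converges in $L^2$ since $\|\nabla e^{-tL}g\|_{L^2}\lesssim t^{-1/2}\|g\|_{L^2}$---but it is another place where the writeup does not match the plan.
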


\begin{proof}
By Corollary \ref{Riesztransform_Rep cor}, it suffices to show that for $f\in L^2\cap L^p$,
\begin{equation}\label{F_k Cauchy}
    F_k(f):= \int_{\frac{1}{k}}^\infty \frac{\nabla e^{-tL}f}{\sqrt{t}}dt \quad\text{is a Cauchy sequence in }L^2(\Rn).
\end{equation}
By the McIntosh-Yagi theorem (\cite{mcintosh1990operators} Theorem 1), we have
\[
\int_0^\infty \int_{\Rn}\abs{(tL)^{1/4}e^{-tL}f}^2\frac{dxdt}{t}\lesssim\norm{f}_{L^2(\Rn)}^2\qquad\forall\, f\in L^2(\Rn),
\]
which implies that for any $\epsilon>0$, there exists $\delta_0=\delta(f,\epsilon)>0$ such that
\begin{equation}\label{M-Y small}
    \int_0^{\delta_0}\int_{\Rn}\abs{(tL)^{1/4}e^{-tL}f}^2\frac{dxdt}{t}\le \epsilon^2.
\end{equation}
Set $N_0>1$ so that $\frac{1}{N_0}<\delta_0$, and choose $m>k>N_0$. We write
\[
F_m(f)-F_k(f)=\int_{\frac{1}{m}}^{\frac{1}{k}}\frac{\nabla e^{-tL}f}{\sqrt{t}}dt=\nabla L^{-1/2}\int_{\frac{1}{m}}^{\frac{1}{k}}\frac{L^{1/2}e^{-tL}f}{\sqrt{t}}dt.
\]
Since $\nabla L^{-1/2}$ is bounded on $L^2$,
\begin{align*}
    \norm{F_m(f)-F_k(f)}_{L^2}&\le C\norm{\int_{\frac{1}{m}}^{\frac{1}{k}}\frac{L^{1/2}e^{-tL}f}{\sqrt{t}}dt}_{L^2}\\
    &= C\sup_{g\in L^2, \norm{g}_{L^2}\le1}\abs{\br{\int_{\frac{1}{m}}^{\frac{1}{k}}\frac{L^{1/2}e^{-tL}f}{\sqrt{t}}dt,g}}.
\end{align*}
We compute
\begin{align*}
    \br{\int_{\frac{1}{m}}^{\frac{1}{k}}\frac{L^{1/2}e^{-tL}f}{\sqrt{t}}dt,g}&=\int_{\frac{1}{m}}^{\frac{1}{k}}\frac{1}{\sqrt{t}}\br{L^{1/2}e^{-tL/2}f,e^{-tL^*/2}g}dt\\
    &=\int_{\frac{1}{m}}^{\frac{1}{k}}\frac{1}{\sqrt{t}}\br{L^{1/4}e^{-tL/2}f,(L^*)^{1/4}e^{-tL^*/2}g}dt\\
    &=\int_{\frac{1}{m}}^{\frac{1}{k}}\frac{1}{t}\br{(tL)^{1/4}e^{-tL/2}f,(tL^*)^{1/4}e^{-tL^*/2}g}dt.
\end{align*}
Then by Cauchy-Schwartz, the McIntosh-Yagi theorem and \eqref{M-Y small}, we obtain
\begin{align*}
    &\quad\abs{\br{\int_{\frac{1}{m}}^{\frac{1}{k}}\frac{L^{1/2}e^{-tL}f}{\sqrt{t}}dt,g}}\\
    &\le\br{\int_{\frac{1}{m}}^{\frac{1}{k}}\int_{\Rn}\abs{(tL)^{1/4}e^{-tL/2}f}^2\frac{dxdt}{t}}^{1/2}
    \br{\int_{\frac{1}{m}}^{\frac{1}{k}}\int_{\Rn}\abs{(tL^*)^{1/4}e^{-tL^*/2}g}^2\frac{dxdt}{t}}^{1/2}\\
    &\le C\br{\int_0^{\delta_0}\int_{\Rn}\abs{(tL)^{1/4}e^{-tL/2}f}^2\frac{dxdt}{t}}^{1/2}\norm{g}_{L^2}\le C\epsilon.
\end{align*}
This implies that for any $m\ge k>N_0$, $\norm{F_m(f)-F_k(f)}_{L^2}\le C\epsilon$, that is, $\set{F_k(f)}$ is Cauchy in $L^2$.
 \end{proof}

We shall use this representation of the Riesz transform to obtain the $L^p$ boundedness of the square root operator.
\begin{lem}\label{lem to sqrtL* Lp}
To show that $\norm{(L^*)^{1/2}f}_{L^p}\lesssim\norm{\nabla f}_{L^p}$, $f\in \dot{W}^{1,p}$, with $p>1$, it suffices to show
\begin{equation}\label{Riesztransform Lp'}
    \norm{\nabla L^{-1/2}g}_{L^{p'}}\lesssim\norm{g}_{L^{p'}} \qquad\forall\, g\in L^2\cap L^{p'}.
\end{equation}
\end{lem}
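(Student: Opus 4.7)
The plan is a duality argument that turns the desired $L^p$ bound for $(L^*)^{1/2}$ on $f$ into a dual $L^{p'}$ bound for the Riesz transform of $L$ on a test function $g$. By standard approximation (since $W^{1,2}\cap\dot W^{1,p}$ is dense in $\dot W^{1,p}$), it suffices to prove the bound when $f\in W^{1,2}\cap\dot W^{1,p}$; for such $f$ the Kato estimate \eqref{Kato*} already gives $(L^*)^{1/2}f\in L^2(\Rn)$. Using $L^2\cap L^{p'}$ as a norming subspace of $L^{p'}$, it is then enough to show, for every $g\in L^2\cap L^{p'}$,
\[
\abs{\br{(L^*)^{1/2}f,\,g}}\lesssim \norm{\nabla f}_{L^p}\norm{g}_{L^{p'}}.
\]

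Fix such a $g$. Since $L^{1/2}:\dot W^{1,2}\to L^2$ is an isomorphism, there is a unique $h=L^{-1/2}g\in\dot W^{1,2}$ with $L^{1/2}h=g$. The representation formula (Lemma~\ref{L1/2repres Lem}) yields
\[
\br{(L^*)^{1/2}f,\,g}=\br{(L^*)^{1/2}f,\,L^{1/2}h}=\int_{\Rn}A\nabla f\cdot\overline{\nabla h}.
\]
Split $A=A^s+A^a$. Since $A^s\in L^\infty$, the symmetric contribution is bounded by $\lambda_0^{-1}\norm{\nabla f}_{L^p}\norm{\nabla h}_{L^{p'}}$ via H\"older's inequality. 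For the anti-symmetric contribution, antisymmetry lets us reorganize
\[
\int_{\Rn}A^a\nabla f\cdot\overline{\nabla h}=\frac{1}{2}\sum_{i,j}\int_{\Rn}a^a_{ij}\br{\Dj f\,\overline{\Di h}-\Di f\,\overline{\Dj h}}.
\]
Proposition~\ref{HardyProp1} places each cross-term $\Dj f\,\overline{\Di h}-\Di f\,\overline{\Dj h}$ in $\mathcal{H}^1(\Rn)$ with norm controlled by $\norm{\nabla f}_{L^p}\norm{\nabla h}_{L^{p'}}$, so the $\mathcal{H}^1$--$BMO$ duality together with $\norm{a^a_{ij}}_{BMO}\le\Lambda_0$ finishes this piece. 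Assembling both contributions and invoking the hypothesis \eqref{Riesztransform Lp'} to replace $\norm{\nabla h}_{L^{p'}}=\norm{\nabla L^{-1/2}g}_{L^{p'}}$ by $\norm{g}_{L^{p'}}$ gives the desired inequality, and taking the supremum over $g$ concludes the proof.

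The only real point to verify carefully is the legitimacy of the pairings: one must check that $\int_{\Rn} A\nabla f\cdot\overline{\nabla h}$ coincides with the sesquilinear form appearing in Lemma~\ref{L1/2repres Lem} (this is immediate because $\nabla f,\nabla h\in L^2$, with the additional $L^p$/$L^{p'}$ integrability only producing the stronger Hardy space control needed for the BMO piece), and that the density argument used for $f$ in $\dot W^{1,p}$ is compatible with $(L^*)^{1/2}f$ being defined through closability. I do not foresee a genuine obstacle here, since Proposition~\ref{HardyProp1} and the isomorphism property of $L^{1/2}$ are precisely what make this duality mechanism run smoothly in the presence of merely $BMO$ antisymmetric coefficients; a direct H\"older estimate against $A^a$ would fail, and this is exactly the step the lemma reduces to an $\mathcal{H}^1$--$BMO$ pairing.
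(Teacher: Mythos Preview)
Your proposal is correct and follows essentially the same route as the paper: dualize via $g\in L^2\cap L^{p'}$, write $g=L^{1/2}h$ with $h=L^{-1/2}g$, invoke the representation formula of Lemma~\ref{L1/2repres Lem}, and bound the resulting bilinear form using Proposition~\ref{HardyProp1} before applying the hypothesis~\eqref{Riesztransform Lp'}. The only difference is that you spell out the $A^s/A^a$ split explicitly, whereas the paper compresses that step into a single citation of Proposition~\ref{HardyProp1}; your approximation class $W^{1,2}\cap\dot W^{1,p}$ in place of $\mathscr{S}(\Rn)$ is equally valid.
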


\begin{proof}
Let $f\in\mathscr{S}(\Rn)$.
\begin{align*}
    \norm{(L^*)^{1/2}f}_{L^p}&=\sup_{\substack{g\in L^2\cap L^{p'}\\\norm{g}_{L^{p'}}\le1}}\abs{\br{(L^*)^{1/2}f,g}}
    =\sup_{\substack{g\in L^2\cap L^{p'}\\\norm{g}_{L^{p'}}\le1}}\abs{\br{(L^*)^{1/2}f,L^{1/2}L^{-1/2}g}}\\
    &=\sup_{\substack{g\in L^2\cap L^{p'}\\\norm{g}_{L^{p'}}\le1}}\abs{\br{(L^*)^{1/2}f,L^{1/2}h}}
\end{align*}
where $h=L^{-1/2}g$. Then by Lemma \ref{L1/2repres Lem} and Proposition \ref{HardyProp1}
\[
\abs{\br{(L^*)^{1/2}f,L^{1/2}h}}=\abs{\int_{\Rn}\nabla f\cdot\overline{A\nabla h}}\lesssim\norm{\nabla f}_{L^p}\norm{\nabla L^{-1/2}g}_{L^{p'}}.
\]
Therefore, \eqref{Riesztransform Lp'} gives
\begin{equation}\label{L*1/2Lp}
  \norm{(L^*)^{1/2}f}_{L^p}\lesssim\sup_{\substack{g\in L^2\cap L^{p'}\\\norm{g}_{L^{p'}}\le1}}\norm{\nabla f}_{L^p}\norm{g}_{L^{p'}}\lesssim\norm{\nabla f}_{L^p}\qquad\forall\, f\in\mathscr{S}(\Rn).
\end{equation}
Since $\mathscr{S}(\Rn)$ is dense in $\dot{W}^{1,p}$, $(L^*)^{1/2}$ can be extended to $\dot{W}^{1,p}$ and \eqref{L*1/2Lp} holds for all $f\in\dot{W}^{1,p}$. 
\end{proof}

Therefore, to prove $\norm{(L^*)^{1/2}f}_{L^p}\lesssim\norm{\nabla f}_{L^p}$ for $p>2$, it suffices to show
\begin{equation}\label{Riesztransform Lp' p_0}
    \norm{\nabla L^{-1/2}g}_{L^{p'}}\lesssim\norm{g}_{L^{p'}} \qquad\forall\, g\in L^2\cap L^{p_0},
\end{equation}
where $p_0\in [1,p')$. This is because $L^2\cap L^{p_0}$ is dense in $L^2\cap L^{p'}$. In order to prove \eqref{Riesztransform Lp' p_0}, we show that the Riesz transform is of weak type $(p_0,p_0)$, and then the strong type $(p,p)$ bound follows from interpolation with the strong type (2,2) bound \eqref{Riesztrans_L2}.

\begin{prop}\label{Riesztransform Lpbound Prop}
Let $p_0\in(1,2)$. Then
\begin{equation*}
    \norm{\nabla L^{-1/2}g}_{L^{\infty,p_0}}\lesssim\norm{g}_{L^{p_0}} \qquad\forall\, g\in L^{p_0}\cap L^2.
\end{equation*}
As a consequence, for any $1<p<2$,
\begin{equation*}
    \norm{\nabla L^{-1/2}g}_{L^p}\lesssim\norm{g}_{L^p} \qquad\forall\, g\in L^p\cap L^2.
\end{equation*}
\end{prop}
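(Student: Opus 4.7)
The plan is to establish the weak-type $(p_0,p_0)$ bound via a Calder\'on--Zygmund decomposition compatible with the semigroup structure, in the spirit of Blunck--Kunstmann and \cite{auscher2007necessary}; the strong-type $(p,p)$ bound for $1<p<2$ then follows by Marcinkiewicz interpolation between this weak-type estimate (for any $1<p_0<p$) and the strong $L^2$ bound \eqref{Riesztrans_L2}. Given $g\in L^{p_0}\cap L^2$ and $\alpha>0$, I would apply the Calder\'on--Zygmund decomposition to $|g|^{p_0}$ at height $\alpha^{p_0}$ to write $g=f+h$ with $h=\sum_i b_i$, where $|f|\le C\alpha$ a.e., the $b_i$ are supported on pairwise disjoint dyadic cubes $Q_i$ satisfying $\sum_i|Q_i|\le C\alpha^{-p_0}\|g\|_{L^{p_0}}^{p_0}$, and each $b_i$ has mean zero with $\|b_i\|_{L^{p_0}}\le C\alpha|Q_i|^{1/p_0}$. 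The good part is handled immediately by the $L^2$ bound \eqref{Riesztrans_L2}: Chebyshev together with $\|f\|_{L^2}^2\le C\alpha^{2-p_0}\|g\|_{L^{p_0}}^{p_0}$ gives $|\{|\nabla L^{-1/2}f|>\alpha/2\}|\lesssim\alpha^{-p_0}\|g\|_{L^{p_0}}^{p_0}$.

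For the bad part, set $E^\ast=\bigcup_i 4Q_i$, which satisfies $|E^\ast|\lesssim\alpha^{-p_0}\|g\|_{L^{p_0}}^{p_0}$, and reduce by Markov's inequality to proving
\[
\sum_i\int_{\Rn\setminus 4Q_i}\bigl|\nabla L^{-1/2}b_i(x)\bigr|\,dx\;\lesssim\;\alpha^{1-p_0}\|g\|_{L^{p_0}}^{p_0}.
\]
Decomposing $\Rn\setminus 4Q_i=\bigsqcup_{j\ge 2}S_j(Q_i)$ into dyadic annular shells $S_j(Q_i)=2^{j+1}Q_i\setminus 2^j Q_i$ and applying Cauchy--Schwarz on each shell, it suffices to show a bound of the form $\|\nabla L^{-1/2}b_i\|_{L^2(S_j(Q_i))}\lesssim 2^{-j(n/2+\eps)}|Q_i|^{1/2-1/p_0}\|b_i\|_{L^{p_0}}$ for some $\eps>0$; then the inner integral is controlled by $C\alpha|Q_i|$, and summing over $i$ closes the estimate. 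To prove such an annular bound I would invoke Proposition \ref{Riesztransform prop} to represent $\nabla L^{-1/2}b_i=\pi^{-1/2}\int_0^\infty \nabla e^{-tL}b_i\,t^{-1/2}\,dt$ and split the time integral at $t=r_i:=\ell(Q_i)^2$.

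For the tail $t\ge r_i$, I would apply the $L^{p_0}\to L^2$ off-diagonal estimates for $\sqrt{t}\nabla e^{-tL}$ (obtained by composing the $L^2$ off-diagonal estimate of Proposition \ref{L2off-diag Prop} with the $L^{p_0}-L^2$ boundedness of the semigroup via Riesz--Thorin interpolation); the Gaussian factor $\exp(-c\,d(Q_i,S_j(Q_i))^2/t)$ with $d(Q_i,S_j(Q_i))\approx 2^j\ell(Q_i)$ produces exponential decay in $j$ and yields a convergent time integral. The main obstacle is the singular part $\int_0^{r_i}$, where off-diagonal decay alone is insufficient to offset the $t^{-1/2}$ weight near zero: one must crucially exploit the cancellation $\int b_i=0$. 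Writing $\nabla e^{-tL}=\nabla e^{-tL/2}\circ e^{-tL/2}$ and using the mean-zero property to express $e^{-tL/2}b_i(x)=\int_{Q_i}[K_{t/2}(x,y)-K_{t/2}(x,y_i)]b_i(y)\,dy$ (where $y_i$ is the center of $Q_i$), the H\"older regularity of $K_t$ provided by Theorem \ref{AuscherReg_mainthm} gains an additional factor $(\ell(Q_i)/\sqrt{t})^\mu$. Combining this pointwise gain with the $L^2$ boundedness of $\sqrt{t}\nabla e^{-tL/2}$ and a further off-diagonal splitting of $e^{-tL/2}b_i$ into its contributions near and far from the shell $S_j(Q_i)$, the time integral over $(0,r_i)$ now converges and contributes the needed geometric decay in $j$. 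Summing the two contributions yields the annular estimate, and this completes the proof of the weak-type bound; interpolation with \eqref{Riesztrans_L2} then gives the strong bound stated in the proposition.
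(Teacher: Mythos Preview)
Your overall strategy --- Calder\'on--Zygmund decomposition, good part via the $L^2$ bound \eqref{Riesztrans_L2}, bad part via annular $L^2$ estimates on $S_j(Q_i)$ --- is a legitimate route, but it differs from the paper's.  The paper simply applies the abstract weak-type criterion of Lemma~\ref{AThem2.1} with $T=\nabla L^{-1/2}$ and $A_r=I-(I-e^{-r^2L})^m$; the two hypotheses \eqref{AThm2.1 2.1}--\eqref{AThm2.1 2.2} are verified using only the representation formula of Proposition~\ref{Riesztransform prop} and the $L^{p_0}$--$L^2$ off-diagonal estimates for $(\sqrt{t}\nabla e^{-tL})_{t>0}$, with no appeal to the pointwise kernel regularity of Theorem~\ref{AuscherReg_mainthm}.

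More importantly, your analysis of the bad part has the roles of small and large $t$ reversed.  For $t\in(0,r_i)$ and $j\ge 2$, the off-diagonal factor already satisfies $e^{-c(2^j\ell(Q_i))^2/t}\le e^{-c4^j}$, so
\[
\int_0^{r_i} t^{-1-\gamma_{p_0}/2}\,e^{-c4^jr_i/t}\,dt
\]
carries exponential decay in $j$ without any cancellation; the small-time piece is the easy one.  The genuine obstruction is the tail $t\ge r_i$: once $t\gtrsim 4^jr_i$ the Gaussian is $\approx 1$, and one obtains only
\[
\int_{r_i}^\infty t^{-1-\gamma_{p_0}/2}\,e^{-c4^jr_i/t}\,dt\ \approx\ 2^{-j\gamma_{p_0}}\,r_i^{-\gamma_{p_0}/2}.
\]
After Cauchy--Schwarz on $S_j(Q_i)$ this yields a factor $2^{j(n/2-\gamma_{p_0})}$, and since $\gamma_{p_0}=n/p_0-n/2<n/2$ for every $p_0>1$, the sum in $j$ diverges.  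Thus your claim that the large-$t$ tail is handled by off-diagonal decay alone is incorrect.  The cancellation must be spent at large $t$: either move your mean-zero\,/\,H\"older argument there (for $t\ge r_i$ one has $\ell(Q_i)/\sqrt t<1$, so the H\"older gain is a genuine decay factor in $t$), or --- as the paper does --- replace mean-zero atoms by the semigroup regularization $(I-e^{-r^2L})^m$, which contributes an extra $(r_i/t)^m$ at large $t$ with $m$ chosen so that $2m+\gamma_{p_0}>n/2$.
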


Proposition \ref{Riesztransform Lpbound Prop} can be proved exactly as in \cite{auscher2007necessary} (p.43-44) and its proof is thus omitted. The main ingredients in the proof are the Riesz transform representation formula (Proposition \ref{Riesztransform prop}), the $L^{p_0}-L^2$ off-diagonal estimates for $(\sqrt{t}\nabla e^{-tL})_{t>0}$, and the following lemma.

\begin{lem}[\cite{auscher2007necessary} Theorem 2.1]\label{AThem2.1}
Let $p_0\in[1,2)$. Suppose that $T$ is a sublinear operator of strong type $(2,2)$, and let $A_r$, $r>0$, be a family of linear operators acting on $L^2$. For a ball $B$, let $C_1(B)=4B$, $C_j(B)=2^{j+1}B\setminus 2^jB$ if $j\ge2$.

Assume for $j\ge2$
\begin{equation}\label{AThm2.1 2.1}
    \br{\frac{1}{\abs{2^{j+1}B}}\int_{C_j(B)}\abs{T(I-A_{r(B)})f}^2}^{1/2}\le g(j)\br{\frac{1}{\abs{B}}\int_B\abs{f}^{p_0}}^{1/{p_0}}
\end{equation}
and for $j\ge1$
\begin{equation}\label{AThm2.1 2.2}
  \br{\frac{1}{\abs{2^{j+1}B}}\int_{C_j(B)}\abs{A_{r(B)}f}^2}^{1/2}\le g(j)\br{\frac{1}{\abs{B}}\int_B\abs{f}^{p_0}}^{1/{p_0}}
\end{equation}
for all ball $B$ with radius $r(B)$ and all $f$ supported in $B$.
If $\Sigma=\Sigma_jg(j)2^{nj}<\infty$, then $T$ is of weak type $(p_0,p_0)$, with a bound depending only on the strong type $(2,2)$ bound of $T$, $p_0$ and $\Sigma$.
\end{lem}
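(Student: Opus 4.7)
The plan is to establish this weak-type $(p_0,p_0)$ bound via a Calder\'on-Zygmund decomposition adapted to the abstract operator family $(A_r)_{r>0}$. Fix $\alpha>0$ and, by density, $f\in L^{p_0}\cap L^2$. Apply the CZ decomposition to $\abs{f}^{p_0}$ at height $\alpha^{p_0}$ to obtain $f=g+b$ with $b=\sum_i b_i$, $\supp b_i\subset Q_i$ for disjoint dyadic cubes $\{Q_i\}$, satisfying $\norm{g}_{L^\infty}\lesssim\alpha$, $\norm{g}_{L^{p_0}}\lesssim\norm{f}_{L^{p_0}}$, $\fint_{Q_i}\abs{b_i}^{p_0}\lesssim\alpha^{p_0}$, and $\sum_i\abs{Q_i}\lesssim\alpha^{-p_0}\norm{f}_{L^{p_0}}^{p_0}$. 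The target $\alpha^{p_0}\abs{\set{\abs{Tf}>\alpha}}\lesssim\norm{f}_{L^{p_0}}^{p_0}$ splits into good-part and bad-part contributions.

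For the good part, Chebyshev and strong $(2,2)$ of $T$ give
\[
\abs{\set{\abs{Tg}>\alpha/2}}\lesssim\alpha^{-2}\norm{g}_{L^2}^2\lesssim\alpha^{-2}\norm{g}_{L^\infty}^{2-p_0}\norm{g}_{L^{p_0}}^{p_0}\lesssim\alpha^{-p_0}\norm{f}_{L^{p_0}}^{p_0}.
\]
For the bad part, by sublinearity $\abs{Tb}\le\abs{T(I-A)b}+\abs{TAb}$, where $Ab:=\sum_i A_{r(Q_i)}b_i$ and $(I-A)b:=\sum_i(I-A_{r(Q_i)})b_i$. Let $E^*:=\bigcup_i 4Q_i$, which already satisfies $\abs{E^*}\lesssim\alpha^{-p_0}\norm{f}_{L^{p_0}}^{p_0}$, so I only need to control the measure of $\set{\abs{T(I-A)b}>\alpha/4}\cap(E^*)^c$ and of $\set{\abs{TAb}>\alpha/4}$.

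For the first piece I would derive an annular $L^1$-bound via Cauchy--Schwarz and hypothesis \eqref{AThm2.1 2.1}: on each annulus $C_j(Q_i)$ with $j\ge 2$,
\[
\int_{C_j(Q_i)}\abs{T(I-A_{r(Q_i)})b_i}\le\abs{C_j(Q_i)}^{1/2}\,g(j)\,\abs{2^{j+1}Q_i}^{1/2}\,\alpha\lesssim g(j)\,2^{jn}\,\abs{Q_i}\,\alpha.
\]
Summing over $j\ge 2$ (using $\Sigma<\infty$) and over $i$, then applying $L^1$-Chebyshev on $(E^*)^c$, produces the desired $\alpha^{-p_0}\norm{f}_{L^{p_0}}^{p_0}$ bound.

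For the second piece, $L^2$-Chebyshev together with strong $(2,2)$ of $T$ reduce the problem to proving $\norm{Ab}_{L^2}^2\lesssim\alpha^{2-p_0}\norm{f}_{L^{p_0}}^{p_0}$. I would decompose annularly: writing $A_{r(Q_i)}b_i=\sum_{j\ge1}(A_{r(Q_i)}b_i)\1_{C_j(Q_i)}$ and using the triangle inequality in $j$,
\[
\norm{Ab}_{L^2}\le\sum_{j\ge 1}\norm{\textstyle\sum_i(A_{r(Q_i)}b_i)\1_{C_j(Q_i)}}_{L^2}.
\]
For each fixed $j$, a pointwise Cauchy--Schwarz combined with the bounded-overlap estimate $\sum_i\1_{2^{j+1}Q_i}(x)\lesssim 2^{jn}$ for the CZ cubes gives, via \eqref{AThm2.1 2.2}, that the inner norm is $\lesssim g(j)\,2^{jn}\,\alpha\,\br{\sum_i\abs{Q_i}}^{1/2}$. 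Summing in $j$ using $\Sigma<\infty$ and plugging back yields $\norm{Ab}_{L^2}\lesssim\Sigma\,\alpha^{1-p_0/2}\,\norm{f}_{L^{p_0}}^{p_0/2}$, completing the proof. The main obstacle is justifying the bounded-overlap estimate for enlargements of CZ dyadic cubes carefully, which requires leveraging the stopping-time structure (maximal dyadic cubes with a uniform mean condition) rather than the mere fact that the $Q_i$'s are disjoint dyadic; this is the classical ingredient but must be handled with care since arbitrary disjoint dyadic cubes need not satisfy such a multiplicity bound.
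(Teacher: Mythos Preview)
Your overall architecture---Calder\'on--Zygmund decomposition, good/bad split, annular $L^1$-Chebyshev for $T(I-A)b$ off $E^*$, and $L^2$-Chebyshev plus strong $(2,2)$ for $TAb$---is exactly the route taken in \cite{auscher2007necessary}, which the paper simply cites without reproducing the proof. The good part and the $T(I-A)b$ part are handled correctly.

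The gap is precisely where you suspect, but it is fatal rather than a technicality: the bounded-overlap estimate $\sum_i\1_{2^{j+1}Q_i}(x)\lesssim 2^{jn}$ is \emph{false} for Calder\'on--Zygmund cubes, and the stopping-time structure does not save it. In dimension one, take $Q_k=[2^{-k},2^{-k+1}]$ for $k=1,\dots,N$ and set $\abs{f}^{p_0}\equiv c$ on each $Q_k$, zero elsewhere, with $\alpha^{p_0}<c\le\alpha^{p_0}/(1-2^{-N})$. Then each $Q_k$ satisfies $\fint_{Q_k}\abs{f}^{p_0}=c>\alpha^{p_0}$ while its parent $[0,2^{-k+1}]$ has average $c(1-2^{k-N-1})\le\alpha^{p_0}$, so $Q_1,\dots,Q_N$ are genuinely the CZ cubes. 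But every $4Q_k$ contains $0$, giving $\sum_k\1_{4Q_k}(0)=N$, which is unbounded. Thus pointwise Cauchy--Schwarz with an $L^\infty$ overlap bound cannot close the estimate for $\norm{Ab}_{L^2}$.

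The correct device in Auscher's proof is duality plus Kolmogorov's inequality, which sidesteps overlap entirely. One writes $\norm{Ab}_{L^2}=\sup_{\norm{u}_{L^2}\le1}\bigl|\sum_i\langle A_{r_i}b_i,u\rangle\bigr|$, applies Cauchy--Schwarz on each $C_j(Q_i)$ together with \eqref{AThm2.1 2.2} to get
\[
\abs{\langle A_{r_i}b_i,u\rangle}\lesssim\sum_{j\ge1}g(j)\,2^{jn}\,\abs{Q_i}\,\alpha\,\inf_{y\in Q_i}\bigl(M(\abs{u}^2)(y)\bigr)^{1/2}\le\Sigma\,\alpha\int_{Q_i}\bigl(M(\abs{u}^2)\bigr)^{1/2},
\]
and then sums over the \emph{disjoint} cubes $Q_i$. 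The resulting integral $\int_{\cup_i Q_i}(M(\abs{u}^2))^{1/2}$ is controlled by Kolmogorov's inequality for the weak-$(1,1)$ maximal operator, $\int_E(Mg)^{1/2}\lesssim\abs{E}^{1/2}\norm{g}_{L^1}^{1/2}$, yielding $\norm{Ab}_{L^2}\lesssim\Sigma\,\alpha\,\bigl(\sum_i\abs{Q_i}\bigr)^{1/2}$ and hence the desired bound.
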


We now turn to the case when $p<2$ in the $L^p$ estimate of the square root. Due to Lemma \ref{lem to sqrtL* Lp}, to show $\norm{(L^*)^{1/2}f}_{L^p}\lesssim\norm{\nabla f}_{L^p}$ for $1<p<2$, it suffices to show $\norm{\nabla L^{-1/2}g}_{L^{p'}}\lesssim\norm{g}_{L^{p'}}$ for any $g\in L^2\cap L^{p'}$.

\begin{prop}\label{Riesztransform Lpboundp>2 Prop}
Let $p_0\in(2,2+\epsilon_1)$, where $\epsilon_1$ is as in Proposition \ref{sqrttnabla_L2-LpProp}. Then for any $2<p<p_0$,
\begin{equation*}
    \norm{\nabla L^{-1/2}g}_{L^p}\lesssim\norm{g}_{L^p} \qquad\forall\, g\in L^p\cap L^2.
\end{equation*}
\end{prop}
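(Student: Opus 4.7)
The plan is to mirror the strategy used for Proposition \ref{Riesztransform Lpbound Prop} but apply the $p>2$ counterpart of Lemma \ref{AThem2.1} from \cite{auscher2007necessary} (Theorem 2.2 there). That extrapolation result asserts that a sublinear operator $T$ that is bounded on $L^2$ extends to a bounded operator on $L^p$ for $2<p<p_0$, provided one can exhibit a family of operators $\{A_r\}_{r>0}$ such that for every ball $B$ with radius $r(B)$ and every $f\in L^2$,
\begin{equation*}
\Big(\frac{1}{|B|}\int_B |T(I-A_{r(B)})f|^2\Big)^{1/2}\le \sum_{j\ge 1} g(j)\Big(\frac{1}{|2^{j+1}B|}\int_{2^{j+1}B}|f|^2\Big)^{1/2},
\end{equation*}
\begin{equation*}
\Big(\frac{1}{|B|}\int_B |TA_{r(B)}f|^{p_0}\Big)^{1/p_0}\le \sum_{j\ge 1} g(j)\Big(\frac{1}{|2^{j+1}B|}\int_{2^{j+1}B}|f|^2\Big)^{1/2},
\end{equation*}
with $\sum_j g(j) 2^{nj}<\infty$. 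I will apply this with $T=\nabla L^{-1/2}$, which is $L^2$ bounded by \eqref{Riesztrans_L2}, and with the standard choice $A_r=I-(I-e^{-r^2 L})^m$ for an integer $m$ chosen large enough to make the time integrals converge.

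To verify the first estimate, I will expand $(I-e^{-r^2L})^m$ binomially and use the representation formula in Proposition \ref{Riesztransform prop} to write $\nabla L^{-1/2}(I-e^{-r^2L})^m f$ as a linear combination of integrals $\int_0^\infty \nabla e^{-(t+kr^2)L} f\,\frac{dt}{\sqrt{t}}$. After a change of variable, this reduces to estimating $\int_0^\infty \psi_k(t/r^2)\,\sqrt{t}\,\nabla e^{-tL}f\,\frac{dt}{t}$ for bounded functions $\psi_k$ with appropriate decay, and the annular $L^2\to L^2$ estimate then follows from the $L^2$ off-diagonal bounds for $(\sqrt{t}\nabla e^{-tL})_{t>0}$ in Proposition \ref{L2off-diag Prop}, integrated against the Gaussian factor $e^{-c(2^j r)^2/t}$, exactly as in the $p_0<2$ case at \cite{auscher2007necessary} pp.~43--44.

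For the second estimate, $TA_{r(B)} = \nabla L^{-1/2}\bigl(I-(I-e^{-r^2L})^m\bigr)$ is again a linear combination of operators of the form $\nabla L^{-1/2}e^{-kr^2L}$. Using $\nabla L^{-1/2}e^{-kr^2L}f = c\int_0^\infty \nabla e^{-(t+kr^2)L} f\,\frac{dt}{\sqrt{t}}$ and splitting the integral into $t\le r^2$ and $t>r^2$, I will apply the $L^2\to L^{p_0}$ off-diagonal estimates for $(\sqrt{t}\nabla e^{-tL})_{t>0}$ from Proposition \ref{sqrttnabla_L2-LpProp}, where the time integrand inherits the factor $t^{-\gamma_{2p_0}/2}$ that, together with the $t^{-1/2}$ from the Riesz representation, yields an integrable weight (since $\gamma_{2p_0}<1$ when $p_0<2+\epsilon_1$, assuming $\epsilon_1$ is further restricted so that $p_0$ is below the Sobolev conjugate of $2$; in dimension two this is automatic). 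Summing the geometric-type decay in $j$ produces the required $g(j)$ with $\sum g(j)2^{nj}<\infty$.

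The main obstacle I anticipate is the off-diagonal bookkeeping for $TA_{r(B)}$: keeping the Gaussian annular decay while passing from an $L^2$ bound on $\sqrt{t}\nabla e^{-tL}$ to an $L^{p_0}$ bound, and ensuring the integrability in $t$ at $t=\infty$ (controlled by choosing $m$ sufficiently large so that the cancellation $(I-e^{-r^2L})^m$ kills enough low-frequency behavior) and at $t=0$. Once these are handled, applying the extrapolation theorem gives $\nabla L^{-1/2}\in \mathcal{L}(L^p)$ for $2<p<p_0$, and by density of $L^p\cap L^2$ in $L^p$ one concludes $\norm{\nabla L^{-1/2}g}_{L^p}\lesssim \norm{g}_{L^p}$ for all $g\in L^p\cap L^2$, which is the statement of the proposition.
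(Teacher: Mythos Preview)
Your proposal has a genuine gap: you have misstated Auscher's Theorem~2.2. In the correct version (Lemma~\ref{AThm2.2} in the paper), the right-hand side of the \emph{second} condition is $C\,(M(|Tf|^2))^{1/2}(y)$, not a sum of $L^2$ averages of $f$. This distinction is essential, and your plan for verifying the second condition would fail. If you try to bound $\nabla L^{-1/2}e^{-kr^2L}f$ on a ball $B$ directly in terms of $L^2$ averages of $f$ on annuli $C_j(B)$ via the representation $\int_0^\infty \nabla e^{-(t+kr^2)L}f\,\frac{dt}{\sqrt t}$ and the $L^2\!\to\! L^{p_0}$ off-diagonal estimates, the $t$-integral near $t\sim(2^jr)^2$ only produces decay of order $(2^jr)^{-\gamma_{2p_0}}$; after normalization this gives $g(j)\approx 2^{jn/p_0}$, so $\sum_j g(j)2^{nj}$ diverges. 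No choice of $m$ rescues this, because the trouble is not low-frequency cancellation but the absence of exponential off-diagonal decay once the time integral is performed.

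The mechanism that makes the argument work, and which the paper (following \cite{auscher2007necessary}, pp.~48--50) uses, is the commutation
\[
\nabla L^{-1/2}e^{-kr^2L}f \,=\, \nabla e^{-kr^2L}\bigl(L^{-1/2}f\bigr).
\]
Now one applies the $L^2\!\to\! L^{p_0}$ off-diagonal estimates for the \emph{single} operator $\sqrt{kr^2}\,\nabla e^{-kr^2L}$ (Proposition~\ref{sqrttnabla_L2-LpProp}), obtaining exponential weights $e^{-c4^j}$ and
\[
\Bigl(\frac{1}{|B|}\int_B |\nabla e^{-kr^2L}(L^{-1/2}f)|^{p_0}\Bigr)^{1/p_0}
\lesssim \sum_{j\ge 1} e^{-c4^j}\Bigl(\frac{1}{|2^{j+1}B|}\int_{2^{j+1}B}|\nabla L^{-1/2}f|^2\Bigr)^{1/2}
\lesssim \bigl(M(|Tf|^2)\bigr)^{1/2}(y),
\]
which is exactly the hypothesis of Lemma~\ref{AThm2.2}. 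The first condition (with $M(|f|^2)^{1/2}$ on the right) is then handled as you describe, and Lemma~\ref{AThm2.2} yields the $L^p$ bound for $2<p<p_0$.
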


The main ingredients of the proof are the Riesz transform representation formula we obtained in Proposition \ref{Riesztransform prop}, the $L^2-L^{p_0}$ off-diagonal estimate for $(\sqrt{t}\nabla e^{-tL})_{t>0}$, which gives the upper range $2+\epsilon_1$ of $p$,  and the following lemma. The proof of Proposition \ref{Riesztransform Lpboundp>2 Prop} is contained in \cite{auscher2007necessary} (p.48-50) and is thus omitted.

\begin{lem}[\cite{auscher2007necessary} Theorem 2.2]\label{AThm2.2}
Let $p_0\in (2,\infty]$. Suppose that $T$ is sublinear operator acting on $L^2$, and let $A_r$, $r>0$, be a family of linear operators acting on $L^2$. Assume
\begin{equation*}
    \br{\frac{1}{\abs{B}}\int_B\abs{T(I-A_{r(B)})f}^2}^{1/2}\le C\br{M(\abs{f}^2)}^{1/2}(y),
\end{equation*}
and
\begin{equation*}
    \br{\frac{1}{\abs{B}}\int_B\abs{TA_{r(B)}f}^{p_0}}^{1/{p_0}}\le C\br{M(\abs{Tf}^2)}^{1/2}(y),
\end{equation*}
for all $f\in L^2$, all ball $B$ with radius $r(B)$ and all $y\in B$. If $2<p<p_0$ and $Tf\in L^p$ when $f\in L^p\cap L^2$, then $T$ is strong type $(p,p)$. More precisely, for all $f\in L^p\cap L^2$,
\[
\norm{Tf}_{L^p}\le c\norm{f}_{L^p}
\]
where $c$ depends only on $n$, $p$, $p_0$ and $C$.
\end{lem}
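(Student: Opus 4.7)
The proof is a good-$\lambda$ argument in the spirit of Auscher--Martell: the two hypotheses give complementary estimates for $Tf$ on every ball---an $L^2$ piece controlled by $M(\abs{f}^2)^{1/2}$ and an $L^{p_0}$ piece controlled by $M(\abs{Tf}^2)^{1/2}$---so combining them through a Whitney decomposition of the level set $\set{M(\abs{Tf}^2)>\lambda^2}$ produces a good-$\lambda$ inequality. Integrating this inequality against $\lambda^{p-1}d\lambda$ and using the $L^{p/2}$ boundedness of the Hardy--Littlewood maximal operator (valid because $p/2>1$) yields the desired $L^p$ bound, provided we can absorb a copy of $\norm{Tf}_{L^p}^p$ on the left---which is exactly what the a priori assumption $Tf\in L^p$ licenses.

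I would fix $f\in L^p\cap L^2$ with $\norm{Tf}_{L^p}<\infty$ and parameters $K\gg 1$, $\gamma\ll 1$ to be chosen. For each $\lambda>0$ introduce the open set $\Omega_\lambda=\set{M(\abs{Tf}^2)>\lambda^2}$ and the exceptional set $F_\lambda=\set{M(\abs{f}^2)>\gamma^2\lambda^2}$, and apply a Whitney decomposition of $\Omega_\lambda$ into balls $\set{B_j}$ whose fixed enlargements $B_j^{\ast}$ each contain a point $y_j$ with $M(\abs{Tf}^2)(y_j)\le\lambda^2$. The key good-$\lambda$ estimate is
\begin{equation*}
    \abs{\set{\abs{Tf}>K\lambda}\setminus F_\lambda}\,\le\,\br{\frac{C\gamma^2}{K^2}+\frac{C}{K^{p_0}}}\abs{\Omega_\lambda},
\end{equation*}
which is proved ball-by-ball. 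For a Whitney ball $B_j$ meeting $\stcomp{F_\lambda}$, choose $y'_j\in B_j\cap\stcomp{F_\lambda}$, set $r_j=r(B_j^{\ast})$, and split $Tf=T(I-A_{r_j})f+TA_{r_j}f$ by sublinearity. Chebyshev's inequality on $B_j$ combined with hypothesis (1) applied on $B_j^{\ast}$ at $y=y'_j$ bounds the super-level set of the first summand by $\tfrac{4C^2\gamma^2}{K^2}\abs{B_j}$, and Chebyshev combined with hypothesis (2) applied on $B_j^{\ast}$ at $y=y_j$ bounds the super-level set of the second summand by $\tfrac{(2C)^{p_0}}{K^{p_0}}\abs{B_j}$; summing over the (disjoint) $B_j$ gives the estimate.

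Next I would use the layer-cake representation $\norm{Tf}_{L^p}^p=p\int_0^\infty \mu^{p-1}\abs{\set{\abs{Tf}>\mu}}d\mu$, split the integrand at level $\mu/K$ against $F_{\mu/K}$, change variables, and convert the two resulting pieces into $\int M(\abs{f}^2)^{p/2}$ and $\int M(\abs{Tf}^2)^{p/2}$, respectively. The $L^{p/2}$ boundedness of $M$ then yields
\begin{equation*}
   \norm{Tf}_{L^p}^p\,\le\, C(K/\gamma)^p\norm{f}_{L^p}^p\,+\,C\br{\gamma^2 K^{p-2}+K^{p-p_0}}\norm{Tf}_{L^p}^p.
\end{equation*}
Since $p<p_0$, the exponent $p-p_0$ is negative, so first picking $K$ so large that $CK^{p-p_0}<1/4$ and then $\gamma$ so small that $C\gamma^2 K^{p-2}<1/4$ makes the $\norm{Tf}_{L^p}^p$ term on the right absorbable into the left; the a priori finiteness $\norm{Tf}_{L^p}<\infty$ is precisely what turns this absorption into a genuine estimate rather than a tautology, and the resulting constant $c$ depends only on $n,p,p_0,C$.

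The main obstacle is the careful bookkeeping in the ball-by-ball step: the enlargement $B_j^{\ast}$ must be large enough to capture a point $y_j$ outside $\Omega_\lambda$ (so that hypothesis (2) can be invoked with the sharp bound $M(\abs{Tf}^2)(y_j)^{1/2}\le\lambda$) yet small enough to keep $\abs{B_j^{\ast}}\approx\abs{B_j}$, and one must verify that Chebyshev against the $L^{p_0}$ average delivers the gain $K^{-p_0}$ rather than $K^{-2}$. It is this $p_0$-power gain, combined with $p<p_0$, that powers the absorption; if hypothesis (2) furnished only an $L^2$ average the argument would collapse, which is why the same proof does not extend past $p=p_0$.
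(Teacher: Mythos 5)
Your argument is correct and is essentially the standard proof of this result: the paper itself gives no proof (it quotes Theorem 2.2 of \cite{auscher2007necessary}), and the cited proof is exactly your good-$\lambda$ scheme --- Whitney decomposition of $\set{M(\abs{Tf}^2)>\lambda^2}$, the splitting $Tf=T(I-A_{r})f+TA_{r}f$ on enlarged Whitney balls with Chebyshev at exponents $2$ and $p_0$, integration against $\lambda^{p-1}\,d\lambda$, the $L^{p/2}$ maximal theorem, and absorption licensed by the a priori finiteness of $\norm{Tf}_{L^p}$. The only points to make explicit are the a.e.\ inequality $\abs{Tf}\le \br{M(\abs{Tf}^2)}^{1/2}$ (so that $\set{\abs{Tf}>K\lambda}$ is covered by the Whitney balls of $\Omega_\lambda$, which has finite measure since $Tf\in L^p$) and the case $p_0=\infty$, where Chebyshev is replaced by the $L^\infty$ bound $\norm{TA_{r}f}_{L^\infty(B_j)}\le C\lambda$, which empties the corresponding level set once $K>2C$.
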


We observe that the arguments above all applies to the adjoint operator $L^*$. So we have obtained
\begin{prop}
Let $1+\frac{1}{1+\epsilon_1}<p<\infty$, where $\epsilon_1$ is as in Proposition \ref{sqrttnabla_L2-LpProp}. Let $f\in \dot{W}^{1,p}$. Then
\begin{equation*}
    \norm{(L^*)^{1/2}f}_{L^p}\lesssim\norm{\nabla f}_{L^p}, \qquad \norm{L^{1/2}f}_{L^p}\lesssim\norm{\nabla f}_{L^p}.
\end{equation*}
\end{prop}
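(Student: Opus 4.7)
The plan is to assemble this directly from the Riesz-transform bounds already proved and the duality encoded in Lemma \ref{lem to sqrtL* Lp}. First, I would combine Propositions \ref{Riesztransform Lpbound Prop} and \ref{Riesztransform Lpboundp>2 Prop} with the $L^2$ bound \eqref{Riesztrans_L2} to obtain the unified statement
\[
    \norm{\nabla L^{-1/2}g}_{L^q}\lesssim\norm{g}_{L^q},\qquad \forall\, g\in L^2\cap L^q,\quad \forall\, q\in(1,2+\epsilon_1).
\]
Since the hypotheses on $L$ (namely $A^s$ elliptic and $L^\infty$, and $A^a\in\mathrm{BMO}$ with the same $\lambda_0,\Lambda_0$) are invariant under $A\mapsto A^{\intercal}$, every result developed up to this point applies verbatim to $L^*$ in place of $L$. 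In particular, the analogous Riesz bound holds: $\norm{\nabla (L^*)^{-1/2}g}_{L^q}\lesssim\norm{g}_{L^q}$ for $g\in L^2\cap L^q$ and $q\in(1,2+\epsilon_1)$.

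Next, I would invoke Lemma \ref{lem to sqrtL* Lp}, which reduces the estimate $\norm{(L^*)^{1/2}f}_{L^p}\lesssim\norm{\nabla f}_{L^p}$ on $\dot W^{1,p}$ to the Riesz bound $\norm{\nabla L^{-1/2}g}_{L^{p'}}\lesssim\norm{g}_{L^{p'}}$ on $L^2\cap L^{p'}$. The admissible range is exactly $p'\in(1,2+\epsilon_1)$, which translates to
\[
    p\in\Bigl(1+\tfrac{1}{1+\epsilon_1},\,\infty\Bigr),
\]
giving the first of the two claimed inequalities. The second inequality $\norm{L^{1/2}f}_{L^p}\lesssim\norm{\nabla f}_{L^p}$ is obtained by running exactly the same argument with the roles of $L$ and $L^*$ swapped: Lemma \ref{lem to sqrtL* Lp} (applied to $L^*$ in place of $L$) reduces it to $\norm{\nabla (L^*)^{-1/2}g}_{L^{p'}}\lesssim\norm{g}_{L^{p'}}$, which is the Riesz bound for $L^*$ established above.

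There is no real obstacle to overcome here; the argument is a packaging step. The only point requiring a moment of care is justifying the extension from $\mathscr S(\Rn)$ (where Lemma \ref{lem to sqrtL* Lp} is stated) to the full homogeneous Sobolev space $\dot W^{1,p}$, which is routine: $\mathscr S(\Rn)$ is dense in $\dot W^{1,p}$ for $1<p<\infty$, and the bound obtained on $\mathscr S$ allows one to extend $L^{1/2}$ and $(L^*)^{1/2}$ to bounded linear operators from $\dot W^{1,p}$ to $L^p$ by continuity. The symmetry between $L$ and $L^*$, together with the fact that the Riesz estimate is valid on the whole open interval $(1,2+\epsilon_1)$, is exactly what yields the endpoint-matched range $p>1+\tfrac{1}{1+\epsilon_1}$.
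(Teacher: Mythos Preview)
Your proposal is correct and matches the paper's approach exactly: the paper simply observes that all preceding arguments (the Riesz-transform $L^q$ bounds for $q\in(1,2+\epsilon_1)$ and Lemma~\ref{lem to sqrtL* Lp}) apply equally to $L^*$, and the proposition is stated as the resulting summary. Note that the density extension to $\dot W^{1,p}$ is already handled inside the proof of Lemma~\ref{lem to sqrtL* Lp}, so you need not repeat it.
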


We remark that the $L^p$ estimate for the square root is actually valid for all $p\in(1,\infty)$. That is, we have
$\norm{L^{1/2}f}_{L^p}\lesssim\norm{\nabla f}_{L^p}$ for $1<p<\infty$, $f\in \dot{W}^{1,p}$.
This can be obtained by the weak type (1,1) estimate
\begin{equation}\label{L1/2 weak1}
    \norm{L^{1/2}f}_{L^{1,\infty}}\lesssim\norm{\nabla f}_{L^1}
\end{equation}
and then by Marcinkiewicz interpolation. \eqref{L1/2 weak1} can be derived using a Calderon-Zygmund decomposition for Sobolev functions. One can find the details in \cite{auscher2007necessary} Lemma 5.13. There, the result is carefully relied on dimension, as well as the lower range of $p$ in the $L^p-L^2$ off-diagonal estimate for the semigroup. In our setting, we do not need those discussions mainly because the Gaussian estimate \eqref{K_t_bd} yields the $L^p-L^2$ off-diagonal estimate for $(e^{-tL})_{t>0}$ for all $1<p<2$.

We end this section by summarizing our results. Note that by writing $\nabla f= \nabla L^{-1/2}\br{L^{1/2}f}$, the $L^p$ estimate for the Riesz transform associated to $L$ yields the invertibility property of the square root on $L^p$ spaces.
\begin{thm}\label{squareroot main thm}
Let $n\ge 2$, $\epsilon_1$ be as in Proposition \ref{sqrttnabla_L2-LpProp}, and let $f\in \dot{W}^{1,p}(\Rn)$. Then $\norm{L^{1/2}f}_{L^p}\lesssim\norm{\nabla f}_{L^p}$ for $1<p<\infty$. And $\norm{\nabla f}_{L^p}\lesssim\norm{L^{1/2}f}_{L^p}$ for $1<p<2+\epsilon_1$. Furthermore, $L^{1/2}$ extends to an isomorphism from $\dot{W}^{1,p}$ onto $L^p$ when $1<p<2+\epsilon_1$.
\end{thm}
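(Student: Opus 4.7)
The plan is to assemble the theorem from the ingredients already developed earlier in the paper. For the upper bound $\norm{L^{1/2}f}_{L^p}\lesssim\norm{\nabla f}_{L^p}$, I would first handle the range $1+\frac{1}{1+\epsilon_1}<p<\infty$ by duality in the form of Lemma \ref{lem to sqrtL* Lp}: to control $\norm{L^{1/2}f}_{L^p}$ it suffices to establish the $L^{p'}$ boundedness of the Riesz transform $\nabla (L^*)^{-1/2}$, which follows from Propositions \ref{Riesztransform Lpbound Prop} and \ref{Riesztransform Lpboundp>2 Prop} applied to $L^*$ (since all off-diagonal bounds and resolvent estimates also hold for $L^*$). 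This already gives the square root bound for every $p\in\bigl(1+\frac{1}{1+\epsilon_1},\infty\bigr)$, in particular the full range of $p\ge 2$.

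To extend the upper bound to the full range $1<p<\infty$, I would combine Marcinkiewicz interpolation with the weak-type $(1,1)$ estimate $\norm{L^{1/2}f}_{L^{1,\infty}}\lesssim\norm{\nabla f}_{L^1}$ alluded to at the end of the section. Following the scheme of \cite{auscher2007necessary} Lemma 5.13, one performs a Calder\'on--Zygmund decomposition of $\nabla f$ at height $\alpha$, writes $f=g+\sum_i b_i$ with $b_i$ supported on a Whitney cube $Q_i$ and having vanishing mean gradient, and splits $L^{1/2}=L^{1/2}e^{-t_iL}+L^{1/2}(I-e^{-t_iL})$ with $t_i=\ell(Q_i)^2$. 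The good part is controlled by the already established $L^2$ Kato estimate, while the bad part is handled using the $L^2$ off-diagonal estimates of Proposition \ref{L2off-diag Prop} together with the functional-calculus identity $L^{1/2}(I-e^{-t L})=c\int_0^{t}L e^{-sL}\,ds/\sqrt{s}$. In our setting we do not need to restrict $p$ according to dimension, because Proposition \ref{etL_Lp-L2Prop} gives $L^p$--$L^2$ off-diagonal bounds for the semigroup in the full range $1<p<2$, which in turn is a consequence of the Gaussian kernel estimates of Theorem \ref{AuscherReg_mainthm}. Interpolating this weak-$(1,1)$ inequality with the $L^2$ Kato estimate closes the range $1<p\le 2$.

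For the reverse inequality $\norm{\nabla f}_{L^p}\lesssim\norm{L^{1/2}f}_{L^p}$ on $1<p<2+\epsilon_1$, I would use the factorization $\nabla f=\nabla L^{-1/2}(L^{1/2}f)$ together with the $L^p$ boundedness of the Riesz transform itself: Proposition \ref{Riesztransform Lpbound Prop} covers $1<p<2$, \eqref{Riesztrans_L2} covers $p=2$, and Proposition \ref{Riesztransform Lpboundp>2 Prop} covers $2<p<2+\epsilon_1$. A density argument (using that $\dot W^{1,p}\cap \dot W^{1,2}$ is dense in $\dot W^{1,p}$, and that $L^{1/2}$ is already defined on $\dot W^{1,2}$) extends the inequality to all of $\dot W^{1,p}$. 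Combining both inequalities, $L^{1/2}$ is bounded and bounded below on $\dot W^{1,p}$, hence has closed range in $L^p$; since its range contains the $L^p$-dense subspace $L^{1/2}(D(L)\cap L^p)\supset L(D(L)\cap L^p)$ (using \eqref{sqrt_defid} and the density of the range of $L$ in $L^p$), the range is all of $L^p$ and $L^{1/2}$ is the desired isomorphism.

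The main obstacle is the weak-type $(1,1)$ estimate for $L^{1/2}$, since everything else is a bookkeeping assembly of previously proved statements; the difficulty lies in controlling the bad part of the Calder\'on--Zygmund decomposition when $A$ has only BMO anti-symmetric part, so that the off-diagonal estimates must substitute for pointwise kernel bounds on $L^{1/2}e^{-tL}$ and handle the non-local behavior coming from the anti-symmetric piece via the $\mathcal{H}^1$-type bounds of Section \ref{Hardy norms subsec}. Once the weak $(1,1)$ bound is in place, interpolation and duality complete the argument routinely.
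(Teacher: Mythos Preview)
Your proposal is correct and follows essentially the same route as the paper: duality via Lemma~\ref{lem to sqrtL* Lp} combined with the Riesz transform bounds of Propositions~\ref{Riesztransform Lpbound Prop} and~\ref{Riesztransform Lpboundp>2 Prop} (applied to $L^*$) for the upper bound in the range $p>1+\frac{1}{1+\epsilon_1}$, the weak-type $(1,1)$ estimate from \cite{auscher2007necessary} Lemma~5.13 plus interpolation for the remaining range, and the factorization $\nabla f=\nabla L^{-1/2}(L^{1/2}f)$ for the reverse inequality. The paper likewise defers the details of the weak $(1,1)$ bound to \cite{auscher2007necessary}, so your identification of that step as the main nontrivial ingredient is accurate.
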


\section{$L^p$ estimates for square functions}
\label{Lp est for square functions subsect}

\begin{prop}
For any $f\in L^2$,
\begin{equation}\label{PropgLL2}
    \int_{\Rn}\int_0^{\infty}\abs{(L^{1/2}e^{-tL}f)(x)}^2dtdx\lesssim \norm{f}_{L^2(\Rn)}^2,
\end{equation}
where the implicit constant depends on $\lambda_0$, $\Lambda_0$ and $n$. By a change of variable, we have
\begin{equation}\label{PropgLL2_t2}
   \int_{\Rn}\int_0^{\infty}\abs{tL^{1/2}e^{-t^2L}f(x)}^2\frac{dtdx}{t}\lesssim \norm{f}_{L^2(\Rn)}^2.
\end{equation}
\end{prop}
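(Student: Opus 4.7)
The plan is a two-step reduction: first apply the Kato estimate $\norm{L^{1/2}g}_{L^2}\lesssim\norm{\nabla g}_{L^2}$ to $g=e^{-tL}f$, then control the resulting $t$-integral of $\norm{\nabla e^{-tL}f}_{L^2}^2$ by the standard energy identity for the semigroup.

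For the first step, note that for every $t>0$, Theorem \ref{De-tL_L2est_thm} gives $e^{-tL}f\in W^{1,2}(\Rn)\subset\dot W^{1,2}(\Rn)$, so the Kato estimate of \cite{escauriaza2018kato}, recalled in Section \ref{Lp Kato subsec}, yields
\[
\norm{L^{1/2}e^{-tL}f}_{L^2}^2 \;\lesssim\; \norm{\nabla e^{-tL}f}_{L^2}^2
\]
with a constant depending only on $\lambda_0,\Lambda_0,n$. Integrating in $t$, it therefore suffices to prove $\int_0^\infty\norm{\nabla e^{-tL}f}_{L^2}^2\,dt\lesssim\norm{f}_{L^2}^2$.

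For the second step, set $u(t)=e^{-tL}f$. By Theorem \ref{De-tL_L2est_thm}, $u\in C((0,\infty),W^{1,2}(\Rn))$ with $\Dt u=-Lu$ in $L^2$, so
\[
\frac{d}{dt}\norm{u(t)}_{L^2}^2 \;=\; 2\operatorname{Re}(\Dt u,u) \;=\; -2\operatorname{Re}\int_{\Rn} A\nabla u\cdot\overline{\nabla u}.
\]
Splitting $A=A^s+A^a$ and $u=u_1+iu_2$, the real antisymmetry of $A^a$ forces $\operatorname{Re}\int A^a\nabla u\cdot\overline{\nabla u}=0$, while ellipticity of $A^s$ gives $\operatorname{Re}\int A^s\nabla u\cdot\overline{\nabla u}\ge\lambda_0\norm{\nabla u}_{L^2}^2$. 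Integrating the differential inequality $\frac{d}{dt}\norm{u(t)}_{L^2}^2\le -2\lambda_0\norm{\nabla u(t)}_{L^2}^2$ over $(\eps,T)$ and letting $\eps\to 0^+$ and $T\to\infty$ (using $L^2$-strong continuity of the semigroup at $0$ and $\norm{u(T)}_{L^2}^2\ge 0$) produces
\[
\int_0^\infty\norm{\nabla e^{-tL}f}_{L^2}^2\,dt \;\le\; \frac{1}{2\lambda_0}\norm{f}_{L^2}^2,
\]
which combined with the first step proves \eqref{PropgLL2}. The estimate \eqref{PropgLL2_t2} then follows from \eqref{PropgLL2} by the substitution $t\mapsto t^2$.

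I do not anticipate a serious obstacle here: both ingredients are already in hand, and working in a Hilbert space removes any need to invoke the full McIntosh $H^\infty$-calculus machinery. The only slightly delicate point is verifying that the antisymmetric part of $A$ drops out of $\operatorname{Re}\int A\nabla u\cdot\overline{\nabla u}$ for complex-valued $u$, which reduces to the symmetric pairing identity $A^a\nabla u_i\cdot\nabla u_i=0$ for $i=1,2$.
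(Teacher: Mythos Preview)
Your proposal is correct and follows essentially the same route as the paper: reduce \eqref{PropgLL2} via the Kato estimate to the gradient square function bound $\int_0^\infty\norm{\nabla e^{-tL}f}_{L^2}^2\,dt\lesssim\norm{f}_{L^2}^2$, then prove the latter by differentiating $\norm{e^{-tL}f}_{L^2}^2$ and using ellipticity. The paper additionally establishes $\lim_{t\to\infty}\norm{e^{-tL}f}_{L^2}^2=0$ to obtain an exact identity, whereas you more economically discard the nonnegative term $\norm{u(T)}_{L^2}^2$; both yield the same inequality.
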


\begin{proof}
We first prove
\begin{equation}\label{GLL2}
    \int_0^{\infty}\int_{\Rn}\abs{(\nabla e^{-tL}f)(x)}^2dtdx\lesssim\norm{f}_{L^2(\Rn)}^2.
\end{equation}
Actually, the converse of \eqref{GLL2} is also true. We have
\begin{equation}\label{GLL2identity}
   \norm{f}_{L^2}^2=-\int_0^{\infty}\frac{d}{dt}\norm{e^{-tL}f}_{L^2}^2dt.
\end{equation}
We postpone its proof to the end.
Using Lemma \ref{Evanslem}, one has
\begin{align*}
    \Re \int_{\Rn}A\nabla e^{-tL}f\cdot\overline{\nabla e^{-tL}f}
    &=\Re\br{Le^{-tL}f, e^{-tL}f}=-\Re\act{\Dt e^{-tL}f,e^{-tL}f}\\
    &=-\frac{1}{2}\frac{d}{dt}\norm{e^{-tL}f}_{L^2}^2.
\end{align*}
Therefore,
\[
\norm{f}_{L^2}^2=2\int_0^{\infty}\Re\int_{\Rn}A\nabla e^{-tL}f\cdot\overline{\nabla e^{-tL}f}dxdt,
\]
and \eqref{GLL2} follows from ellipticity.

Using $\norm{L^{1/2}f}_{L^2}\lesssim\norm{\nabla f}_{L^2}$, \eqref{GLL2} gives
\begin{align*}
    \int_0^{\infty}\int_{\Rn}\abs{(L^{1/2}e^{-tL}f)(x)}^2dxdt
    \lesssim\int_0^{\infty}\int_{\Rn}\abs{\nabla e^{-tL}f}^2dxdt
    \lesssim \norm{f}_{L^2(\Rn)}^2
\end{align*}

\underline{Proof of \eqref{GLL2identity}}.
The only thing that needs clarification is
\begin{equation}\label{lim_t=0}
    \lim_{t\to\infty}\norm{e^{-tL}f}_{L^2}^2=0,\qquad\forall\, f\in L^2(\Rn).
\end{equation}

Fix any $f\in L^2(\Rn)$. For any $\eps>0$, choose $\vp_{\eps}\in C_0^{\infty}(\Rn)$ such that
\[
\norm{\vp_{\eps}}_{L^2}\le\norm{f}_{L^2}+1 \quad\text{  and  } \norm{e^{-tL}(f-\vp_{\eps})}_{L^2}<\eps, \quad\forall\,t>0.
\]
And suppose $\supp\vp_{\eps}\subset Q_{\eps}$. Let $p\in (1,2)$. Then the $L^p$ boundedness of $(e^{-tL})_{t>0}$ gives
\begin{equation*}
    \norm{e^{-tL}\vp_{\eps}}_{L^{p}}\le C\norm{\vp_{\eps}}_{L^p}\le C\abs{Q_{\eps}}^{\frac{2-p}{2p}}\norm{\vp_{\eps}}_{L^2}.
\end{equation*}
And the $L^2-L^{p'}$ boundedness of $(e^{-tL})_{t>0}$ gives
\begin{equation*}
    \norm{e^{-tL}\vp_{\eps}}_{L^{p'}}\le Ct^{-\frac{\gamma_{p'}}{2}}\norm{\vp_{\eps}}_{L^2}.
\end{equation*}
Therefore, we have
\begin{align*}
    \norm{e^{-tL}\vp_{\eps}}_{L^2}\le \norm{e^{-tL}\vp_{\eps}}_{L^p}^{1/2}\norm{e^{-tL}\vp_{\eps}}_{L^{p'}}^{1/2}
    \le C\abs{Q_{\eps}}^{\frac{2-p}{4p}}t^{-\frac{\gamma_{p'}}{4}}\br{\norm{f}_{L^2}+1}.
\end{align*}
Then there is a $t_0=t_0(\eps)>0$, such that for any $t>t_0$, $\norm{e^{-tL}\vp_{\eps}}_{L^2}<\eps$, and thus $\norm{e^{-tL}f}_{L^2}<2\eps$. This proves \eqref{lim_t=0}. 
\end{proof}

\begin{prop}\label{Lp_G1Prop}
\begin{equation}\label{Lp_G1}
\norm{\Big(\int_0^{\infty}\abs{tL e^{-t^2L}F}^2\frac{dt}{t}\Big)^{1/2}}_{L^p(\Rn)}\le C_p\norm{\nabla F}_{L^p(\Rn)}
\end{equation}
for all $1<p<\infty$, and $F\in W^{1,2}\cap W^{1,p}$. Equivalently,
\begin{equation}\label{Lp_G1Dt}
\norm{\Big(\int_0^{\infty}\abs{\Dt e^{-t^2L}F}^2\frac{dt}{t}\Big)^{1/2}}_{L^p(\Rn)}\le C_p\norm{\nabla F}_{L^p(\Rn)}.
\end{equation}
\end{prop}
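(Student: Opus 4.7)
The plan is to reduce \eqref{Lp_G1} to the $L^p$-boundedness of the vertical square function $g_L(G)(x) := \br{\int_0^{\infty}\abs{L^{1/2}e^{-sL}G(x)}^2ds}^{1/2}$. Since $\Dt e^{-t^2L} = -2tLe^{-t^2L}$, the two bounds \eqref{Lp_G1} and \eqref{Lp_G1Dt} are equivalent up to a constant. Since $L^{1/2}$ commutes with $e^{-t^2L}$ via the holomorphic functional calculus of $L$, and since $L^{1/2}L^{1/2}F = LF$ holds on $D(L)$, extending by density to $F \in \dot{W}^{1,2}$ gives the identity $tLe^{-t^2L}F = tL^{1/2}e^{-t^2L}(L^{1/2}F)$. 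Setting $G := L^{1/2}F$ and changing variables $s = t^2$ then yields
\[
\int_0^{\infty}\abs{tLe^{-t^2L}F(x)}^2\,\frac{dt}{t} = \tfrac{1}{2}\int_0^{\infty}\abs{L^{1/2}e^{-sL}G(x)}^2\,ds = \tfrac{1}{2}\,g_L(G)(x)^2.
\]
By Theorem \ref{squareroot main thm}, $\norm{G}_{L^p} = \norm{L^{1/2}F}_{L^p} \lesssim \norm{\nabla F}_{L^p}$ for every $1<p<\infty$, so the proposition reduces to showing $\norm{g_L(G)}_{L^p(\Rn)} \lesssim \norm{G}_{L^p(\Rn)}$ for every $1 < p < \infty$.

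For $p = 2$ this is exactly \eqref{PropgLL2}. For $p \neq 2$, the plan is to view $g_L$ as the pointwise $L^2(\Real_+, ds)$-norm of the linear vector-valued operator $\mathcal{G}G(x,s) := L^{1/2}e^{-sL}G(x)$, and apply the extrapolation lemmas of \cite{auscher2007necessary}: Lemma \ref{AThem2.1} for $1 < p < 2$ and Lemma \ref{AThm2.2} for $2 < p < \infty$. The natural choice of ``approximation of the identity'' is $A_r := I - (I-e^{-r^2L})^m$ for a fixed integer $m$ large enough relative to $n$. Verifying the hypotheses then reduces to establishing $L^2$ off-diagonal estimates (in $x$, uniform or integrable in $s$) for the composite vector-valued operators $\mathcal{G}(I-e^{-r^2L})^m$ and $\mathcal{G} e^{-r^2L}$, together with suitable $L^{p_0}$--$L^2$ (for $p_0 < 2$) or $L^2$--$L^{p_0}$ (for $p_0 > 2$) bounds. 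The ingredients for both types of estimate are available from Proposition \ref{L2off-diag Prop}, combined with Propositions \ref{etL_Lp-L2Prop} and \ref{tDtetL_Lp-L2Prop}, after expanding $(I-e^{-r^2L})^m = \br{\int_0^{r^2} Le^{-uL}\,du}^m$ and applying Minkowski's inequality in $s$ and in $u_1,\dots,u_m$.

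The main technical obstacle will be controlling the mild singularity of $L^{1/2}e^{-sL}$ as $s\to 0$ inside the vector-valued off-diagonal estimates: one splits the $ds$-integral at $s = r^2$ and uses that, for $m$ large enough, the factor $(I-e^{-r^2L})^m$ supplies a gain of order $(s/r^2)^m$ for small $s$, ensuring absolute convergence of the $ds$-integral while preserving the Gaussian decay $e^{-cd(E,F)^2/r^2}$ inherited from Proposition \ref{L2off-diag Prop}. Once this off-diagonal bound is established, the verification of hypotheses \eqref{AThm2.1 2.1}--\eqref{AThm2.1 2.2} (for $p<2$) and of the maximal-function hypotheses of Lemma \ref{AThm2.2} (for $p>2$) follows in a standard manner, and the proposition is concluded.
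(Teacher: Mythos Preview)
Your proposal is correct and follows essentially the same route as the paper: reduce to the $L^p$ boundedness of $g_L$ by writing $tLe^{-t^2L}F = tL^{1/2}e^{-t^2L}(L^{1/2}F)$, changing variables, and invoking the $L^p$ Kato estimate $\norm{L^{1/2}F}_{L^p}\lesssim\norm{\nabla F}_{L^p}$; then handle $g_L$ by \eqref{PropgLL2} for $p=2$ and by the extrapolation Lemmas \ref{AThem2.1} and \ref{AThm2.2} with $A_r = I-(I-e^{-r^2L})^m$ for $p\neq 2$. The only minor difference is that for the off-diagonal estimates on $\mathcal{G}(I-e^{-r^2L})^m$ the paper (and \cite{auscher2007necessary}) uses the holomorphic functional calculus contour representation $\vp(L)=\int_{\Gamma_\pm}e^{-zL}\eta_\pm(z)\,dz$ with explicit decay of $\eta_\pm$, rather than your proposed expansion $(I-e^{-r^2L})^m=\br{\int_0^{r^2}Le^{-uL}\,du}^m$; the contour approach handles the fractional power $L^{1/2}$ and the small-$s$ singularity more cleanly, but your route is also workable.
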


\begin{proof}

We claim that to obtain \eqref{Lp_G1}, it suffices to show
\begin{equation}\label{g_L}
    \norm{\br{\int_0^\infty\abs{L^{1/2}e^{-tL}f}^2dt}^{1/2}}_{L^p}\le C_p\norm{f}_{L^p}\qquad\forall\, 1<p<\infty, \quad f\in L^2\cap L^p.
\end{equation}
In fact, by letting $f=L^{1/2}F$ in \eqref{g_L}, \eqref{Lp_G1} follows from the commutativity of $e^{-t^2L}$ and $L^{1/2}$, the $L^p$ estimate for the square root, and a change of variables. The square function $\br{\int_0^\infty\abs{L^{1/2}e^{-tL}f}^2dt}^{1/2}$ is defined to be $g_L(f)$ in \cite{auscher2007necessary}, and it has been proved to be $L^p$ bounded by $\norm{f}_{L^p}$ with the range of $p\in(1,\infty)$ same as the one of boundedness of the semigroup, up to endpoints. In other words, \eqref{g_L} holds for $1<p<\infty$. \eqref{g_L} can be proved (\cite{auscher2007necessary} p.78-80) using Lemma \ref{AThem2.1} for $p<2$ and Lemma \ref{AThm2.2} for $p>2$.
 \end{proof}

\begin{re}
For $2\le p<\infty$, \eqref{Lp_G1} can be alternatively proved by showing that $\abs{tLe^{-t^2L}F(x)}^2\frac{dxdt}{t}$ is a Carleson measure in $\Real^{n+1}_+$, and then using tent space interpolation, as well as local estimates for $\Dt e^{-t^2L}F(x)$. To show that $\abs{tLe^{-t^2L}F(x)}^2\frac{dxdt}{t}$ is a Carleson measure one needs the Gaussian decay estimate for the kernel of $\Dt e^{-t^2L}$ (see \eqref{Dtl_Kt_Gaussianbd}). This method is used in \cite{hofmann2015square} to show \eqref{Lp_G1} for $p\ge 2$ and for $L$ being an elliptic operator with $L^\infty$ coefficients.
\end{re}

We now derive $L^p$ estimates for the functional $\Big(\int_0^{\infty}\abs{t^2\nabla L e^{-t^2L}}^2\frac{dt}{t}\Big)^{1/2}$.

\begin{prop}\label{Lp_G2xProp}
Let $1<p<2+\epsilon_1$, where $\epsilon_1$ is as in Proposition \ref{sqrttnabla_L2-LpProp}. And let $F\in W^{1,2}\cap W^{1,p}$. Then
\begin{equation}\label{Lp_G2x}
\norm{\Big(\int_0^{\infty}\abs{t^2\nabla L e^{-t^2L}F}^2\frac{dt}{t}\Big)^{1/2}}_{L^p(\Rn)}\le C_p\norm{\nabla F}_{L^p(\Rn)}.
\end{equation} Or equivalently,
\begin{equation*}
\norm{\Big(\int_0^{\infty}\abs{t\nabla \Dt e^{-t^2L}F}^2\frac{dt}{t}\Big)^{1/2}}_{L^p(\Rn)}\le C_p\norm{\nabla F}_{L^p(\Rn)}.
\end{equation*}
\end{prop}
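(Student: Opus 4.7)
\textit{Plan.} The approach is to use the factorization
\[
t^2\nabla L e^{-t^2L}F = (t\nabla e^{-t^2L/2})(tLe^{-t^2L/2}F)
\]
together with Proposition \ref{Lp_G1Prop}, which (after the harmless change of variables $s=t^2/2$) controls $\big\|\big(\int_0^\infty|tLe^{-t^2L/2}F|^2\tfrac{dt}{t}\big)^{1/2}\big\|_{L^p}$ by $\|\nabla F\|_{L^p}$ for every $1<p<\infty$. What remains is to transfer the uniform scalar $L^p$-boundedness of $T_t := t\nabla e^{-t^2L/2}$ to a vector-valued estimate in $L^p(\Rn;L^2(dt/t))$. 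The scalar $L^p$-boundedness of $T_t$ in the range $1<p<2+\epsilon_1$ follows from the identity $T_t=(\nabla L^{-1/2})\cdot (tL^{1/2}e^{-t^2L/2})$: the first factor is $L^p$-bounded in that range by Theorem \ref{squareroot main thm}, while the second factor, $(tL^{1/2}e^{-t^2L/2})=\sqrt{2}\,(sL)^{1/2}e^{-sL}|_{s=t^2/2}$, is uniformly $L^p$-bounded for $1<p<\infty$ as a standard element of the McIntosh functional calculus (a direct proof comes from the identity $(sL)^{1/2}e^{-sL} = \pi^{-1/2}\sqrt{s}\int_0^\infty Le^{-(s+\tau)L}\,\frac{d\tau}{\sqrt{\tau}}$ combined with the kernel bounds of Theorem \ref{Dtl_Kt_Thm}).

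The case $p=2$ of Proposition \ref{Lp_G2xProp} follows immediately by Fubini and the uniform $L^2$-boundedness of $T_t$ (Theorem \ref{De-tL_L2est_thm}):
\[
\int\!\!\int|T_t(tLe^{-t^2L/2}F)|^2\,\frac{dt}{t}\,dx\le C\int_0^\infty\|tLe^{-t^2L/2}F\|_{L^2}^2\,\frac{dt}{t}\lesssim \|\nabla F\|_{L^2}^2,
\]
the last step being Proposition \ref{Lp_G1Prop} with $p=2$. For $p\ne2$ in $(1,2+\epsilon_1)$, I would apply Lemma \ref{AThem2.1} (when $1<p<2$) or Lemma \ref{AThm2.2} (when $2<p<2+\epsilon_1$) to the sublinear operator
\[
\mathcal{S}'(f)(x):= \Big(\int_0^\infty|t^2\nabla L^{1/2}e^{-t^2L}f(x)|^2\,\tfrac{dt}{t}\Big)^{1/2},
\]
viewed as acting on $f=L^{1/2}F$, and then use $\|L^{1/2}F\|_{L^p}\lesssim\|\nabla F\|_{L^p}$ from Theorem \ref{squareroot main thm} to conclude. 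The natural approximating family is $A_r:=I-(I-e^{-r^2L})^N$ with $N$ chosen large enough; the required off-diagonal estimates for $\mathcal{S}'(I-A_r)$ and $\mathcal{S}'A_r$ then follow, by composition and the semigroup identity, from the $L^2$ and $L^p$-$L^2$/$L^2$-$L^p$ off-diagonal estimates of Section \ref{Lp semigp sqrt sec} (Propositions \ref{L2off-diag Prop} and \ref{sqrttnabla_L2-LpProp}, together with the corresponding statements for $e^{-tL}$ and $tLe^{-tL}$).

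The main technical hurdle is the verification of the second hypothesis of Lemma \ref{AThm2.2} in the range $2<p<2+\epsilon_1$, namely the control of $\big(\frac{1}{|B|}\int_B|\mathcal{S}'A_rf|^{p_0}\big)^{1/p_0}$ by $(M|\mathcal{S}'f|^2)^{1/2}$: the upper bound $p<2+\epsilon_1$ appears here because the requisite $L^2$-$L^{p_0}$ off-diagonal estimate for $(\sqrt{t}\nabla e^{-tL})_{t>0}$ is available only for $p_0<2+\epsilon_1$ (Proposition \ref{sqrttnabla_L2-LpProp}). The underlying calculation—summing Gaussian off-diagonal factors against $L^{p_0}$-tails on dyadic annuli $C_j(B)$, with careful bookkeeping of powers of $r$ coming from the weight $t^2$ in front of $\nabla L$—is routine in spirit, and closely parallels the proofs of Propositions \ref{Riesztransform Lpbound Prop} and \ref{Riesztransform Lpboundp>2 Prop}; however, it must now be carried out with the BMO-adapted off-diagonal and kernel estimates built up in Sections \ref{the Gaussian property sec}--\ref{Lp semigp sqrt sec} replacing the $L^\infty$-coefficient tools of \cite{auscher2007necessary}.
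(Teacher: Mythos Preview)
Your proposal is correct and follows essentially the same route as the paper: reduce to the square function $\mathcal{S}'(f)=\big(\int_0^\infty|t^2\nabla L^{1/2}e^{-t^2L}f|^2\,\tfrac{dt}{t}\big)^{1/2}$ via $f=L^{1/2}F$ and Theorem \ref{squareroot main thm}; treat $p=2$ by the factorization $t\nabla e^{-t^2L/2}\circ tL^{1/2}e^{-t^2L/2}$ together with \eqref{PropgLL2_t2}; and handle $p\neq 2$ by applying Lemma \ref{AThem2.1} (for $p<2$) and Lemma \ref{AThm2.2} (for $2<p<2+\epsilon_1$) to $\mathcal{S}'$ with $A_r=I-(I-e^{-r^2L})^m$, where the restriction $p_0<2+\epsilon_1$ enters through the $L^2$--$L^{p_0}$ off-diagonal estimate for $(\sqrt{t}\nabla e^{-tL})_{t>0}$ from Proposition \ref{sqrttnabla_L2-LpProp}.

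One remark on execution: the step you describe as ``routine in spirit''---namely the off-diagonal decay of $\mathcal{S}'(I-e^{-r^2L})^m f$ on annuli---is carried out in the paper not by direct composition but via the holomorphic functional calculus representation $tL^{1/2}e^{-t^2L}(I-e^{-r^2L})^m=\int_{\Gamma_\pm}e^{-zL}\eta_\pm(z)\,dz$ with an explicit kernel bound $|\eta_\pm(z)|\lesssim t(|z|+t^2)^{-3/2}\min\big(1,r^{2m}(|z|+t^2)^{-m}\big)$, followed by the $L^2$ (or $L^{p_0}$--$L^2$) off-diagonal estimates for $(\sqrt{z}\nabla e^{-zL})_{z\in\Sigma_\mu}$ and an elementary integral lemma. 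This is indeed the same mechanism as in \cite{auscher2007necessary}, so your reference to the Riesz transform argument points in the right direction; just be aware that writing it out requires the contour-integral estimate rather than a mere semigroup composition.
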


\begin{proof}
We shall establish the following
\begin{equation}\label{sqr}
  \norm{\Big(\int_0^{\infty}\abs{t^2\nabla L^{1/2}e^{-t^2L}f}^2\frac{dt}{t}\Big)^{1/2}}_{L^p(\Rn)}\le C_p\norm{f}_{L^p(\Rn)}
\end{equation}
for $1< p< 2+\epsilon_1$ and $f\in L^2\cap L^p$.
Once this is proved, setting $f=L^{1/2}F\in L^2\cap L^p$ and then using
\begin{equation*}
\norm{L^{1/2}F}_{L^p}\lesssim\norm{\nabla F}_{L^p}, \qquad\forall\, 1<p<\infty
\end{equation*}
one obtains \eqref{Lp_G2x}.\par
We now prove \eqref{sqr} by cases.\par
\textbf{Case 1: $p=2$.}
We write $e^{-t^2L}=e^{-t^2L/2}e^{-t^2L/2}$, and use the fact that $t\nabla e^{-t^2L/2}$ is bounded on $L^2(\Rn)$, uniformly in $t$ (by \eqref{nablaetL_L2bd}), to obtain
\begin{multline*}
    \int_{\Rn}\int_0^{\infty}\abs{t^2\nabla L^{1/2}e^{-t^2L}f}^2\frac{dt}{t}dx
   =\int_0^{\infty}t\int_{\Rn}\abs{t\nabla e^{-t^2L/2}(L^{1/2}e^{-t^2L/2}f)}^2dxdt\\
   \le C\int_0^{\infty}t\int_{\Rn}\abs{L^{1/2}e^{-t^2L/2}f}^2dxdt
   =C\int_{\Rn}\int_0^{\infty}t\abs{L^{1/2}e^{-t^2L/2}f}^2dtdx.
\end{multline*}
By Proposition \ref{PropgLL2},
\begin{equation*}
   \int_{\Rn}\int_0^{\infty}t\abs{L^{1/2}e^{-t^2L/2}f}^2dtdx\lesssim
   \norm{f}_{L^2(\Rn)}^2,
\end{equation*}
which finishes the proof of $L^2$ boundedness.

\textbf{Case 2: $2<p<2+\epsilon_1$.} We exploit Lemma \ref{AThm2.2} in this case. Note that Lemma \ref{AThm2.2} requires $Tf\in L^p$ when $f\in L^p\cap L^2$ and the purpose of the statement is to bound the $L^p$ norm of $Tf$. In practice, we would apply this lemma to suitable approximation of the operator $T$ and obtain uniform $L^p$ bounds. The uniformity of the bounds allows a limiting argument to deduce $L^p$ boundedness of $T$.

With this in mind, we define
$$
\mathcal{G}(f):= \Big(\int_0^{\infty}\abs{t^2\nabla L^{1/2}e^{-t^2L}f}^2\frac{dt}{t}\Big)^{1/2},
$$
and define $\mathcal{G}_{\eps}$ to be the approximation of $\mathcal{G}$
$$
\mathcal{G}_{\eps}(f):= \br{\int_{\eps}^{\infty}\abs{t^2\nabla L^{1/2}e^{-t^2L}f}^2\frac{dt}{t}}^{1/2}.
$$
We shall first show $\mathcal{G}_{\eps}(f)\in L^p$ for $f\in L^p\cap L^2$, and then derive the uniform estimates for $\mathcal{G}_{\eps}(f)$. Namely,
\begin{equation}\label{2.3}
\br{\frac{1}{\abs{B}}\int_B\abs{\mathcal{G}_{\eps}(I-e^{-r^2L})^mf}^2}^{1/2}\lesssim(M(\abs{f}^2))^{1/2}(y) \qquad\forall\, y\in B
\end{equation}
and
\begin{equation}\label{2.4}
\br{\frac{1}{\abs{B}}\int_B\abs{\mathcal{G}_{\eps}(e^{-r^2L}f)}^{p_0}}^{1/{p_0}}\lesssim(M(\abs{\mathcal{G}_{\eps}f}^2))^{1/2}(y) \qquad\forall\, y\in B
\end{equation}
uniformly in $\eps$, for any ball $B$ with radius $r$, and for some integer $m$ large enough. We shall prove \eqref{2.4} with $p_0=2+\epsilon_1$. Then by Lemma \ref{AThm2.2},
$\norm{\mathcal{G}_{\eps}(f)}_{L^p}\lesssim\norm{f}_{L^p}$ for all $2<p<p_0$, uniformly in $\eps$. Letting $\eps\to 0$, one obtains $\norm{\mathcal{G}f}_{L^p}\lesssim\norm{f}_{L^p}$ for all $2<p<p_0$.

\underline{Proof of $\mathcal{G}_{\eps}(f)\in L^p$ for $f\in L^p\cap L^2$.}

We rewrite $\mathcal{G}_{\eps}(f)$ to be
\[
\mathcal{G}_{\eps}(f)=\br{\int_{{\eps}^2}^{\infty}\abs{t\nabla L^{1/2}e^{-tL}f}^2\frac{dt}{t}}^{1/2}.
\]
By Minkowski's inequality,
\begin{align*}
    \norm{\mathcal{G}_{\eps}(f)}_{L^p}&\le\Big\{ \int_{\eps^2}^{\infty}\br{\int_{\Rn}\br{t\abs{\nabla L^{1/2}e^{-tL}f}^2}^{p/2}dx}^{2/p}dt\Big\}^{1/2}\\
    &=\Big\{ \int_{\eps^2}^{\infty}\br{\int_{\Rn}\abs{\sqrt{t}\nabla L^{1/2}e^{-tL}f}^pdx}^{2/p}dt\Big\}^{1/2}\\
    &=\Big\{ \int_{\eps^2}^{\infty}\br{\int_{\Rn}\abs{\sqrt{t}\nabla e^{-tL/2}L^{1/2}e^{-tL/2}f}^pdx}^{2/p}dt\Big\}^{1/2}.
\end{align*}
We first use the $L^2-L^p$ bounds for $(\sqrt{t}\nabla e^{-tL})_{t>0}$, then \eqref{Kato}, and finally the $L^2$ bounds for $(\sqrt{t}\nabla e^{-tL})_{t>0}$ to obtain
\begin{multline*}
    \norm{\mathcal{G}_{\eps}(f)}_{L^p}\lesssim \br{\int_{\eps^2}^{\infty}t^{-\gamma_p}\int_{\Rn}\abs{L^{1/2}e^{-tL/2}f}^2dxdt}^{1/2}\\
    \lesssim\br{\int_{\eps^2}^{\infty}t^{-\gamma_p}\int_{\Rn}\abs{\nabla e^{-tL/2}f}^2dxdt}^{1/2}\\
    \lesssim\br{\int_{\eps^2}^{\infty}t^{-\gamma_p-1}dt}^{1/2}\norm{f}_{L^2}
    \le C_{\eps, p}\norm{f}_{L^2}.
\end{multline*}

\underline{Proof of \eqref{2.4}}\par
Since the domain of $e^{-tL}$ is $L^2$, the operators commute for $f\in L^2$:
\begin{equation*}
 \nabla L^{1/2}e^{-t^2L}(e^{-r^2L}f)=\nabla e^{-r^2L}L^{1/2}e^{-t^2L}f.
\end{equation*}
By Minkowski inequality,
\begin{align}\label{G_2(2.4)Mink}
    \Big\{\frac{1}{\abs{B}}&\int_B\Big(\int_{\eps}^{\infty}\abs{t^2\nabla L^{1/2}e^{-t^2L}(e^{-r^2L}f)}^2\frac{dt}{t}\Big)^{\frac{p_0}{2}}dx\Big\}^{\frac{2}{p_0}}\nonumber\\
    &\le\int_{\eps}^{\infty}\Big(\frac{1}{\abs{B}}\int_B\abs{t^2\nabla L^{1/2}e^{-t^2L}(e^{-r^2L}f)}^{p_0}dx\Big)^{\frac{2}{p_0}}\frac{dt}{t}\nonumber\\
    &=\int_{\eps}^{\infty}\Big(\frac{1}{\abs{B}}\int_B\abs{t^2\nabla e^{-r^2L}(L^{1/2}e^{-t^2L}f)}^{p_0}dx\Big)^{\frac{2}{p_0}}\frac{dt}{t}
\end{align}
Using the $L^2-L^{p_0}$ off-diagonal estimates for $(\sqrt{r}\nabla e^{-rL})_{r>0}$, as well as Poincar\'e inequality, one can show
\begin{equation*}
    \Big(\frac{1}{\abs{B}}\int_B\abs{\nabla e^{-r^2L}f}^{p_0}\Big)^{1/{p_0}}\le\sum_{j\ge1}g(j)\Big(\frac{1}{\abs{2^{j+1}B}}
    \int_{2^{j+1}B}\abs{\nabla f}^2\Big)^{1/2}
\end{equation*}
with $\sum_{j\ge1}g(j)<\infty$. By this and H\"older inequality we can bound \eqref{G_2(2.4)Mink} by
\begin{align}
    \int_{\eps}^{\infty}&\Big\{\sum_{j\ge1}g(j)
    \Big(\frac{1}{\abs{2^{j+1}B}}\int_{2^{j+1}B}\abs{t^2\nabla L^{1/2}e^{-t^2L}f}^2dx\Big)^{1/2}\Big\}^2\frac{dt}{t}\nonumber\\
    &\le C\int_{\eps}^{\infty}\sum_{j\ge1}g(j)\frac{1}{\abs{2^{j+1}B}}\int_{2^{j+1}B}\abs{t^2\nabla L^{1/2}e^{-t^2L}f}^2dx\frac{dt}{t}\nonumber\\
    &\le C\sup_{j\ge1}\frac{1}{\abs{2^{j+1}B}}\int_{2^{j+1}B}\int_{\eps}^{\infty}\abs{t^2\nabla L^{1/2}e^{-t^2L}f}^2\frac{dt}{t}dx\nonumber\\
    &\le C(\mathcal{M}\abs{\mathcal{G}_{\eps}f}^2)^{1/2}(y)\qquad\forall\, y\in B.
\end{align}
\underline{Proof of \eqref{2.3}}\par
Now write $f=\sum_{j\ge1}f_j$, where
$$
\begin{cases}
f_j=(f-(f)_{4B})\mathbbm{1}_{2^{j+1}B\setminus 2^jB} \quad j\ge2,\\
f_1=(f-(f)_{4B})\mathbbm{1}_{4B}.
\end{cases}
$$
Then
$$
\br{\frac{1}{\abs{B}}\int_B\abs{\mathcal{G}_{\eps}(I-e^{-r^2L})^mf}^2}^{1/2}\le\sum_{j\ge1}\br{\frac{1}{\abs{B}}\int_B\abs{\mathcal{G}_{\eps}(I-e^{-r^2L})^mf_j}^2}^{1/2}.
$$
For $f_1$, the $L^2$ bound of $\mathcal{G}$ and that of $(I-e^{-r^2L})^m$ (the latter is a consequence of the holomorphic functional calculus on $L^2$) imply
\begin{multline*}
    \br{\frac{1}{\abs{B}}\int_B\abs{\mathcal{G}_{\eps}(I-e^{-r^2L})^mf_1}^2}^{1/2}\le C\br{\frac{1}{\abs{B}}\int_{\Rn}\abs{f_1}^2}^{1/2}\\
 \le C\br{\frac{1}{\abs{B}}\int_{4B}\abs{f}^2}^{1/2}\le C(\mathcal{M}\abs{f}^2)^{1/2}(y) \quad\forall\, y\in B.
\end{multline*}
For $f_j$ with $j\ge2$, let $\vp(z)=tz^{1/2}e^{-t^2z}(1-e^{-r^2z})^m$. Then (see e.g. \cite{auscher2007necessary} section 3.2)
\begin{equation*}
  tL^{1/2}e^{-t^2L}(1-e^{-r^2L})^m=\vp(L)
  =\int_{\Gamma_+}e^{-zL}\eta_+(z)dz+\int_{\Gamma_-}e^{-zL}\eta_-(z)dz,
\end{equation*}
where $\Gamma_{\pm}$ is the half-ray $\Real^+e^{\pm i(\frac{\pi}{2}-\theta)}$,
\begin{equation*}
    \eta_{\pm}(z)=\frac{1}{2\pi i}\int_{\gamma_{\pm}}e^{\zeta z}\vp(\zeta)d\zeta, \quad z\in\Gamma_{\pm},
\end{equation*}
with $\gamma_{\pm}$ being the half-ray $\Real^{+}e^{\pm i\nu}$, and
$0<\omega_0<\theta<\nu<\frac{\pi}{2}$, where $\omega_0$ is as in Proposition \eqref{L2off-diag Prop}.
One can show 
\begin{equation}\label{eta+_}
 \abs{\eta_{\pm}(z)}\le\frac{Ct}{(\abs{z}+t^2)^{3/2}}\inf(1,\frac{r^{2m}}{(\abs{z}+t^2)^m}), \quad z\in\Gamma_{\pm},
\end{equation}
whose proof is postponed to the end.
Then,
\begin{align}
 \frac{1}{\abs{B}}&\int_B\abs{\mathcal{G}_{\eps}(I-e^{-r^2L})^mf_j}^2dx\nonumber\\
 &=\int_{\eps}^{\infty}\frac{t}{\abs{B}}\int_B\abs{\int_{\Gamma_+}\nabla e^{-zL}\eta_+(z)dzf_j+\int_{\Gamma_-}\nabla e^{-zL}\eta_-(z)dzf_j}^2dxdt\nonumber\\
 &\le C\int_0^{\infty}\frac{t}{\abs{B}}\int_B\abs{\int_{\Gamma_+}\nabla e^{-zL}\eta_+(z)dzf_j}^2dxdt\nonumber\\ &\quad+\int_0^{\infty}\frac{t}{\abs{B}}\int_B\abs{\int_{\Gamma_-}\nabla e^{-zL}\eta_-(z)dzf_j}^2dxdt\nonumber\\
 &=: I_++I_-.
\end{align}
By Minkowski inequality and \eqref{eta+_},
\begin{align*}
    &I_+\le\int_0^{\infty}\frac{t}{\abs{B}}\Big\{\int_{\Gamma_+}\Big(\int_B\abs{\nabla e^{-zL}\eta_+(z)f_j}^2dx\Big)^{1/2}\abs{dz}\Big\}^2dt\\
    &\lesssim\int_0^{\infty}\frac{t}{\abs{B}}\Big\{\int_{\Gamma_+}\Big(\int_B\abs{\sqrt{z}\nabla e^{-zL}\frac{t}{\abs{z}^{1/2}(\abs{z}+t^2)^{3/2}}\frac{r^{2m}}{(\abs{z}+t^2)^m}f_j}^2dx\Big)^{1/2}\abs{dz}\Big\}^2dt.
\end{align*}
Since $z\in\Gamma_+=\Real^+e^{i(\frac{\pi}{2}-\theta)}$ and $\theta<\omega_0$, we can apply the $L^2-L^2$ off-diagonal estimates for $\br{\sqrt{z}\nabla e^{-zL}}_{z\in\Sigma_{\frac{\pi}{2}-\theta}}$, and bound the expression above by
\begin{equation}\label{G_2 5.4}
\int_0^{\infty}\frac{t}{\abs{B}}\Big(\int_{\Gamma_+}e^{-\frac{c4^jr^2}{\abs{z}}}\frac{t}{\abs{z}^{1/2}(\abs{z}+t^2)^{3/2}}\frac{r^{2m}}{(\abs{z}+t^2)^m}\abs{dz}\norm{f_j}_{L^2}\Big)^2dt
\end{equation}
We use the following lemma to estimate \eqref{G_2 5.4}
\begin{lem}[\cite{auscher2007necessary} Lemma 5.5]\label{Alem5.5}
Let $\gamma,\alpha\ge0$, $m>0$ be fixed parameters, and $c$ a positive constant. For some $C$ independent of $j\in\mathbb{N}$, $r,t>0$, the integral
$$
I=\int_0^{\infty}e^{-\frac{c4^jr^2}{s}}\frac{1}{s^{\gamma/2}}\frac{t^{\alpha}}{(s+t)^{1+\alpha}}\frac{r^{2m}}{(s+t)^m}ds
$$
satisfies the estimate
\begin{equation*}
    I\le \frac{C}{4^{jm}(2^jr)^{\gamma}}\inf\Big((\frac{t}{4^jr^2})^{\alpha},(\frac{4^jr^2}{t})^m\Big).
\end{equation*}
\end{lem}
Letting $\gamma=1$, $\alpha=\frac{1}{2}$, and $t$ replaced by $t^2$ in the lemma, we obtain
\begin{align*}
\eqref{G_2 5.4}&\lesssim\int_0^{\infty}\frac{t}{(4^{jm}(2^jr))^2}\inf\Big((\frac{t^2}{4^jr^2}),(\frac{4^jr^2}{t^2})^{2m}\Big)dt\frac{1}{\abs{B}}\int_{2^{j+1}B}\abs{f}^2dx\\
&\lesssim2^{jn}4^{-2jm}\int_0^{\infty}\frac{t}{(2^jr)^2}\inf\Big(\frac{t^2}{4^jr^2},(\frac{4^jr^2}{t^2})^{2m}\Big)dt\frac{1}{\abs{2^{j+1}B}}\int_{2^{j+1}B}\abs{f}^2dx\\
&\lesssim2^{j(n-4m)}\mathcal{M}\abs{f}^2(y)\quad\forall\, y\in B.
\end{align*}
$I_-$ can be estimated similarly. Choose $4m>n$, we get
$$
\frac{1}{\abs{B}}\int_B\abs{\mathcal{G}_{\eps}(I-e^{-r^2L})^mf}^2\lesssim\mathcal{M}\abs{f}^2(y)\quad\forall\, y\in B,
$$
which proves \eqref{2.3}.\\

\textbf{Case 3: $1<p<2$.} We use Lemma \ref{AThem2.1} to prove \eqref{Lp_G2x} holds for $1<p<2$. Define $\mathcal{G}$ as before. Then by letting $T=\mathcal{G}$ and $A_r=I-(I-e^{-r^2L})^m$ in Lemma \ref{AThem2.1}, it suffices to show the following: let $1<p_0<2$,
\begin{equation}\label{2.1}
\br{\frac{1}{\abs{2^{j+1}B}}\int_{C_j(B)}\abs{\mathcal{G}(I-e^{-r^2L})^mf}^2}^{1/2}\le g(j)\br{\frac{1}{\abs{B}}\int_B\abs{f}^{p_0}}^{1/{p_0}}\quad\text{for }j\ge2,
\end{equation}
and for $j\ge1$
\begin{equation}\label{2.2}
\br{\frac{1}{\abs{2^{j+1}B}}\int_{C_j(B)}\abs{(e^{-kr^2L}f)}^2}^{1/2}\le g(j)\br{\frac{1}{\abs{B}}\int_B\abs{f}^{p_0}}^{1/{p_0}}
\end{equation}
for any ball $B$ with radius $r$, for all $f$ supported in $B$, for some integer $m$ sufficiently large, $1\le k\le m$, and $\sum_jg(j)2^{nj}<\infty$.

\eqref{2.2} follows directly from the $L^{p_0}-L^2$ off-diagonal estimate of $(e^{-tL})_{t>0}$. We now turn to \eqref{2.1}.
As in the proof of \eqref{2.3}, we have
\begin{align}
\frac{1}{\abs{2^{j+1}(B)}}&\int_{C_j(B)}\abs{\mathcal{G}(I-e^{-r^2L})^mf}^2dx\nonumber\\
 &\le C\int_0^{\infty}\frac{t}{\abs{2^{j+1}B}}\int_{C_j(B)}\abs{\int_{\Gamma_+}\nabla e^{-zL}\eta_+(z)dz\,f}^2dxdt\nonumber\\ &\quad+\int_0^{\infty}\frac{t}{\abs{2^{j+1}B}}\int_{C_j(B)}\abs{\int_{\Gamma_-}\nabla e^{-zL}\eta_-(z)dz\,f}^2dxdt\nonumber\\
 &=: I_++I_-\label{G2p<2 I}
\end{align}
where $\eta_{\pm}$ and $\Gamma_{\pm}$ are same as before. We only estimate $I_+$, as $I_-$ can be estimated in a similar manner.
By Minkowski inequality and \eqref{eta+_}, $I_+$ is bounded up to a constant by
\begin{align*}
       \int_0^{\infty}\frac{t}{\abs{2^{j+1}B}}\Big\{\int_{\Gamma_+}\Big(\int_{C_j(B)}\abs{\sqrt{z}\nabla e^{-zL}\frac{t}{\abs{z}^{1/2}(\abs{z}+t^2)^{3/2}}\frac{r^{2m}}{(\abs{z}+t^2)^m}\,f}^2dx\Big)^{1/2}\abs{dz}\Big\}^2dt.
\end{align*}
The $L^{p_0}-L^2$ off-diagonal estimates for $\br{\sqrt{z}\nabla e^{-zL}}_{z\in\Sigma_{\frac{\pi}{2}-\theta}}$ implies that the expression above is bounded up to a constant by
\begin{equation*}
\int_0^{\infty}\frac{t}{\abs{2^{j+1}B}}\Big(\int_{\Gamma_+}e^{-\frac{c4^jr^2}{\abs{z}}}\frac{t}{\abs{z}^{(1+\gamma_{p_0})/2}(\abs{z}+t^2)^{3/2}}\frac{r^{2m}}{(\abs{z}+t^2)^m}\abs{dz}\norm{f}_{L^{p_0}(B)}\Big)^2dt.
\end{equation*}
Applying Lemma \ref{Alem5.5} with $\gamma=1+\gamma_{p_0}$ and $\alpha=\frac{1}{2}$, this is bounded up to a constant by
\begin{align*}
    4^{-2jm}&(2^jr)^{-2\gamma_{p_0}}\frac{\norm{f}^2_{L^{p_0}(B)}}{\abs{2^{j+1}B}}\int_0^\infty\frac{t}{(2^jr)^2}\inf\br{\frac{t^2}{4^jr^2},\br{\frac{4^jr^2}{t^2}}^{2m}}dt\\
    &\lesssim 2^{-j(4m+n+2\gamma_{p_0})}\br{\frac{1}{\abs{B}}\int_B\abs{f}^{p_0}}^{\frac{2}{p_0}}.
\end{align*}
Combining this with \eqref{G2p<2 I} gives \eqref{2.1} with $g(j)=2^{-j(2m+\frac{n}{2}+\gamma_{p_0})}$. And thus by choosing $m$ to be an integer such that $2m+\gamma_{p_0}>\frac{n}{2}$, we obtain the desired result.

\underline{Proof of \eqref{eta+_}}\par
We only show the estimate for $\eta_+$. The proof for $\eta_-$ is similar. \par
Write $\zeta=\rho e^{i\nu}$, and $z=\abs{z}e^{i(\frac{\pi}{2}-\theta)}$.
Then $\abs{e^{\zeta z}}=e^{-\rho\abs{z}\sin{(\nu-\theta)}}$, $\abs{e^{-t^2\zeta}}=e^{-t^2\rho\cos{\nu}}$. Since $0<\theta<\nu<\frac{\pi}{2}$,
$\abs{e^{\zeta z}e^{-t^2\zeta}}\le e^{-c\rho(\abs{z}+t^2)}$ for some $0<c<1$. So
$$
\abs{\eta_+(z)}\lesssim t\int_0^{\infty} \rho^{1/2}e^{-c\rho(\abs{z}+t^2)}H(\rho)^md\rho,
$$
where $H(\rho)=\abs{1-\exp(-r^2\rho e^{i\nu})}$. Observe that $H(0)=0$, $H(\rho)\le 2$, and that $H$ is a Lipschitz function with $[H]_{C^{0,1}}\le r^2$. So we have
$$
H(\rho)\le C\inf(1,r^2\rho).
$$
Using this estimate of $H$, we can bound $\abs{\eta_+(z)}$ by
\begin{equation*}
    Ct\int_0^{\infty}\rho^{\frac{1}{2}}e^{-c\rho(\abs{z}+t^2)}d\rho
    =\frac{Ct}{(\abs{z}+t^2)^{3/2}}\int_0^{\infty}s^{\frac{1}{2}}e^{-s}ds=\frac{Ct\Gamma(\frac{3}{2})}{(\abs{z}+t^2)^{3/2}},
\end{equation*}
and by
\begin{equation*}
    Ctr^{2m}\int_0^{\infty}\rho^{\frac{1}{2}+m}e^{-c\rho(\abs{z}+t^2)}d\rho
    =\frac{Ct\Gamma(m+\frac{3}{2})r^{2m}}{(\abs{z}+t^2)^{m+3/2}}.
\end{equation*}
Combining the two bounds we obtain
$$
\abs{\eta_+(z)}\le \frac{Ct}{(\abs{z}+t^2)^{3/2}}\inf(1,\frac{r^{2m}}{(\abs{z}+t^2)^m}).
$$ 
\end{proof}

We also have a similar estimate when the derivative falls on $t$. But in this case, the $L^p$ estimates hold for any $1<p<\infty$:
\begin{prop}\label{Lp_G2t prop}
\begin{equation}\label{Lp_G2t}
\norm{\Big(\int_0^{\infty}\abs{t^2\Dt L e^{-t^2L}F}^2\frac{dt}{t}\Big)^{1/2}}_{L^p(\Rn)}\le C_p\norm{\nabla F}_{L^p(\Rn)}
\end{equation}
for all $1<p<\infty$, and all $F\in W^{1,2}\cap W^{1,p}$.
\end{prop}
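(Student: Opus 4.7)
The strategy parallels the reduction that underlies Proposition \ref{Lp_G1Prop}. Using $\Dt e^{-t^2L}=-2tLe^{-t^2L}$, one computes $t^2\Dt L e^{-t^2L}F=-2t^3L^2e^{-t^2L}F$. Setting $f:=L^{1/2}F$, which belongs to $L^2\cap L^p$ with $\norm{f}_{L^p}\lesssim\norm{\nabla F}_{L^p}$ by Theorem \ref{squareroot main thm} (valid for every $1<p<\infty$), and using that $L^{1/2}$ commutes with $e^{-t^2L}$ on $W^{1,2}$, one rewrites
\[
t^2\Dt L e^{-t^2L}F \;=\; -2t^3L^{3/2}e^{-t^2L}f \;=\; -2\,\phi(t^2L)f,\qquad \phi(z):=z^{3/2}e^{-z}.
\]
Thus \eqref{Lp_G2t} is equivalent to the vertical square function estimate
\[
\Big\|\Big(\int_0^{\infty}|\phi(t^2L)f|^2\,\tfrac{dt}{t}\Big)^{1/2}\Big\|_{L^p(\Rn)}\le C_p\norm{f}_{L^p(\Rn)},\qquad 1<p<\infty,\qquad (\star)
\]
for all $f\in L^2\cap L^p$, to be established next.

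For $p=2$, $(\star)$ is an instance of the McIntosh--Yagi quadratic estimate: $\phi$ is holomorphic on a sector containing $\Theta(L)$ and satisfies $|\phi(z)|\lesssim |z|/(1+|z|^2)$. For $p\neq 2$, I would run exactly the Calder\'on--Zygmund scheme used to prove the $L^p$-boundedness of $g_L$ in Proposition \ref{Lp_G1Prop}, namely Lemma \ref{AThem2.1} for $1<p<2$ and Lemma \ref{AThm2.2} for $2<p<\infty$, with approximation $A_r=I-(I-e^{-r^2L})^m$ for $m$ large. The crucial observation, and the reason for the full range $1<p<\infty$, is that the verification of the hypotheses of these two lemmas only requires the $L^{p_0}\!\to\!L^2$ and $L^2\!\to\!L^{p_0}$ off-diagonal estimates for $(e^{-zL})_{z\in\Sigma_\mu}$ from Propositions \ref{L2off-diag Prop} and \ref{etL_Lp-L2Prop}, which are available for every $p_0\in(1,\infty)$; no gradient off-diagonal estimate is used. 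This contrasts with Proposition \ref{Lp_G2xProp}, where the reliance on Proposition \ref{sqrttnabla_L2-LpProp} forced the restriction $p<2+\epsilon_1$. Heuristically, the time derivative $\Dt$ produces an extra power of $L$ inside the integrand rather than a $\nabla$.

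The concrete verification uses the Cauchy contour representation
\[
\phi(t^2L)(I-e^{-r^2L})^m \;=\; \int_{\Gamma_+}e^{-zL}\eta_+(z)\,dz\;+\;\int_{\Gamma_-}e^{-zL}\eta_-(z)\,dz,
\]
over the rays $\Gamma_\pm=\Real^+e^{\pm i(\pi/2-\theta)}$, exactly as in the proof of Proposition \ref{Lp_G2xProp}. Since the relevant symbol is now $t^3z^{3/2}e^{-t^2z}(1-e^{-r^2z})^m$, the analogue of the estimate \eqref{eta+_}, derived in the same way from $|1-e^{-r^2\zeta}|\le C\min(1,r^2|\zeta|)$, reads
\[
|\eta_\pm(z)|\;\le\; \frac{C\,t^3}{(|z|+t^2)^{5/2}}\,\min\!\Big(1,\frac{r^{2m}}{(|z|+t^2)^m}\Big),
\]
one order of decay better than \eqref{eta+_} thanks to the extra power of $z$ in $\phi$. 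Combining this with the $L^2$ off-diagonal decay of $(e^{-zL})$ on the annuli $C_j(B)$ and invoking Lemma \ref{Alem5.5} (applied with $\gamma=0$, $\alpha=3/2$, and $t$ replaced by $t^2$) yields a gain in the $j$-th annulus that can be made arbitrarily large by choosing $m$ large, delivering the Carleson-type estimates required by Lemmas \ref{AThem2.1} and \ref{AThm2.2}. The main technical step will therefore be the careful bookkeeping of these off-diagonal integrals, together with a uniform-in-$\eps$ approximation argument on the truncations $\int_\eps^\infty$ of the square function, both of which proceed exactly as in the proof of Proposition \ref{Lp_G2xProp}.
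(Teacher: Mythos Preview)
Your proposal is correct and follows essentially the same approach as the paper: the paper too rewrites $t^2\Dt L^{1/2}e^{-t^2L}f=-2t^3L^{3/2}e^{-t^2L}f$ and reduces \eqref{Lp_G2t} to a square function estimate in $f=L^{1/2}F$, handling $p=2$ by McIntosh--Yagi, $p>2$ via Lemma \ref{AThm2.2} with $A_r=I-(I-e^{-r^2L})^m$ and the $L^2$--$L^{p_0}$ off-diagonal estimates for $(e^{-zL})$, and $1<p<2$ via Lemma \ref{AThem2.1} with the $L^{p_0}$--$L^2$ off-diagonal estimates for $(e^{-zL})$ on the sector. The paper obtains the identical $\eta_\pm$ bound $|\eta_\pm(z)|\le Ct^3(|z|+t^2)^{-5/2}\min(1,r^{2m}(|z|+t^2)^{-m})$ and applies Lemma \ref{Alem5.5} with the same parameters $\gamma=0$, $\alpha=3/2$; your identification of why the full range $1<p<\infty$ holds (no gradient off-diagonal estimate is needed) is exactly the mechanism the paper exploits.
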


\begin{proof}
Let
\begin{equation}\label{Gdef G2t}
   \mathcal{G}(f)=\Big(\int_0^{\infty}\abs{t^2\Dt L^{1/2}e^{-t^2L}f}^2\frac{dt}{t}\Big)^{1/2}.
\end{equation}
Then by the $L^p$ bounds of the square root of $L$, it suffices to show
\begin{equation}\label{G2t_Gt_Lp}
\norm{\mathcal{G}f}_{L^p}\le C_p\norm{f}_{L^p}.
\end{equation}
\textbf{Case 1: $p=2$.} The argument can be copied almost verbatim from the proof in Proposition \ref{Lp_G2xProp}. The only difference is that we would use the uniform $L^2$ boundedness of $(t\Dt e^{-t^2L/2})_{t>0}$, rather than that of $(t\nabla e^{-t^2L/2})_{t>0}$.\\
\textbf{Case 2: $p>2$.} We shall apply Lemma \ref{AThm2.2} again. And as in the proof of Proposition \ref{Lp_G2xProp}, we should derive the analog of \eqref{2.3} and \eqref{2.4} for the approximation operator $\mathcal{G}_{\eps}$. We omit this limiting process here for simplicity, and only derive the analogous estimates for $\mathcal{G}$.

Let $p_0>2$. We wish to prove
\begin{equation*}
\br{\frac{1}{\abs{B}}\int_B\abs{\mathcal{G}(e^{-r^2L}f)}^{p_0}}^{1/{p_0}}\lesssim(M(\abs{\mathcal{G}f}^2))^{1/2}(y) \qquad\forall\, y\in B.
\end{equation*}
To this end, we first claim that
\begin{equation}\label{commute}
  t^2\Dt L^{1/2}e^{-t^2L}(e^{-r^2L}f)=-2t^3e^{-r^2L}L^{1/2}Le^{-t^2L}f, \qquad\forall\, f\in L^2.
\end{equation}
Then by this and Minkowski inequality,
\begin{align*}
  \Big\{\frac{1}{\abs{B}}&\int_B\Big(\int_0^{\infty}\abs{t^2\Dt L^{1/2}e^{-t^2L}(e^{-r^2L}f)}^2\frac{dt}{t}\Big)^{\frac{p_0}{2}}dx\Big\}^{\frac{2}{p_0}}\\
    &\le\int_0^{\infty}\Big(\frac{1}{\abs{B}}\int_B\abs{t^2\Dt L^{1/2}e^{-t^2L}(e^{-r^2L}f)}^{p_0}dx\Big)^{\frac{2}{p_0}}\frac{dt}{t}\\
    &=4\int_0^{\infty}\Big(\frac{1}{\abs{B}}\int_B\abs{t^3 e^{-r^2L}L^{1/2}Le^{-t^2L}f}^{p_0}dx\Big)^{\frac{2}{p_0}}\frac{dt}{t}.
\end{align*}
Using the $L^2-L^{p_0}$ off-diagonal estimates for $(e^{-r^2L})_{r>0}$, one can show
\begin{equation*}
    \Big(\frac{1}{\abs{B}}\int_B\abs{e^{-r^2L}f}^{p_0}\Big)^{2/{p_0}}\le\sum_{j\ge1}\frac{C_j}{\abs{2^{j+1}B}}\int_{2^{j+1}B}\abs{f}^2
\end{equation*}
with $C_j=Ce^{-c4^j}$. So
\begin{align*}
  \Big\{\frac{1}{\abs{B}}&\int_B\Big(\int_0^{\infty}\abs{t^2\Dt L^{1/2}e^{-t^2L}(e^{-r^2L}f)}^2\frac{dt}{t}\Big)^{\frac{p_0}{2}}dx\Big\}^{\frac{2}{p_0}}\\
  &\le 4\int_0^{\infty}\sum_{j\ge1}\frac{C_j}{\abs{2^{j+1}B}}\int_{2^{j+1}B}\abs{t^3L^{1/2}Le^{-t^2L}f}^2dx\frac{dt}{t}\\
  &\lesssim\sup_j\frac{1}{\abs{2^{j+1}B}}\int_{2^{j+1}B}\int_0^{\infty}\abs{t^2\Dt L^{1/2}e^{-t^2L}f}^2\frac{dt}{t}dx\\
  &\lesssim M(\abs{\mathcal{G}(f)}^2)(y)\qquad\forall\, y\in B.
\end{align*}
Now we prove
\begin{equation}\label{G2t 2.1 p>2}
\br{\frac{1}{\abs{B}}\int_B\abs{\mathcal{G}(I-e^{-r^2L})^mf}^2}^{1/2}\lesssim(M(\abs{f}^2))^{1/2}(y) \qquad\forall\, y\in B.
\end{equation}
Define $\{f_j\}_{j\ge1}$ as in the proof of Proposition \ref{Lp_G2xProp}. The estimates for $f_1$ again follows from the $L^2$ bound of $\mathcal{G}$ and that of $(I-e^{-r^2L})^m$. For $f_j$ with $j\ge2$, we let $\vp(z)=t^3z^{3/2}e^{-t^2z}(1-e^{-r^2z})^m$. Then
\begin{align*}
    -\frac{1}{2}t^2&\Dt L^{1/2}e^{-t^2L}(I-e^{-r^2L})^m=t^3L^{1/2}Le^{-t^2L}(I-e^{-r^2L})^m\\
    &=\vp(L)=\int_{\Gamma_+}e^{-zL}\eta_+(z)dz+\int_{\Gamma_-}e^{-zL}\eta_-(z)dz,
\end{align*}
where $\eta_{\pm}$ and $\Gamma_{\pm}$ are defined as in the proof of Proposition \ref{Lp_G2xProp}. By a similar argument as in the proof of \eqref{eta+_}, one can show
\begin{equation}\label{eta_t}
 \abs{\eta_{\pm}(z)}\lesssim t^3\int_0^{\infty}\rho^{3/2}e^{-c\rho(\abs{z}+t^2)}H(\rho)^md\rho
  \le \frac{Ct^3}{(\abs{z}+t^2)^{5/2}}\inf(1,\frac{r^{2m}}{(\abs{z}+t^2)^m}).
\end{equation}
Then,
\begin{multline*}
    \frac{1}{\abs{B}}\int_B\abs{\mathcal{G}(I-e^{-r^2L})^mf_j}^2\\
 =4\int_0^{\infty}\frac{1}{\abs{B}}\int_B\abs{t^3L^{1/2}Le^{-t^2L}(I-e^{-r^2L})^mf_j}^2dx\frac{dt}{t}\\
 =4\int_0^{\infty}\frac{1}{\abs{B}}\int_B\abs{\int_{\Gamma_+}e^{-zL}\eta_+(z)dzf_j
 +\int_{\Gamma_-}e^{-zL}\eta_-(z)dzf_j}^2dx\frac{dt}{t}\\
 \lesssim\int_0^{\infty}\frac{1}{\abs{B}}\int_B\abs{\int_{\Gamma_+} e^{-zL}\eta_+(z)dzf_j}^2dx\frac{dt}{t} \\+\int_0^{\infty}\frac{1}{\abs{B}}\int_B\abs{\int_{\Gamma_-} e^{-zL}\eta_-(z)dzf_j}^2dx\frac{dt}{t}
 =: I_++I_-.
\end{multline*}
By Minkowski inequality and \eqref{eta_t},
\begin{equation*}
  I_+\lesssim\int_0^{\infty}
  \frac{1}{\abs{B}}\Big\{\int_{\Gamma_+}\Big(\int_B\abs{t^3e^{-zL}\frac{r^{2m}}{(\abs{z}+t^2)^{\frac{5}{2}+m}}f_j}^2dx\Big)^{1/2}\abs{dz}\Big\}^2\frac{dt}{t}.
\end{equation*}
We use the $L^2$ off-diagonal estimate for $(e^{zL})_{z\in\Sigma_{\frac{\pi}{2}-\theta}}$ to bound the above expression by
$$
\int_0^{\infty}\frac{1}{\abs{B}}
\Big(\int_{\Gamma_+}e^{-\frac{c4^jr^2}{\abs{z}}}\frac{t^3r^{2m}}{(\abs{z}+t^2)^{5/2+m}}\abs{dz}\norm{f_j}_{L^2}\Big)^2\frac{dt}{t}.
$$

Applying Lemma \ref{Alem5.5} and letting $\alpha=\frac{3}{2}$, $\gamma=0$ and $t=t^2$ there, we obtain
\begin{align*}
    I_+&\lesssim\int_0^{\infty}(\frac{1}{4^{jm}})^2\inf\Big((\frac{t^2}{4^jr^2})^3,(\frac{4^jr^2}{t^2})^{2m}\Big)
    \frac{1}{\abs{B}}\norm{f}^2_{L^2(2^{j+1}B)}\frac{dt}{t}\\
    &\lesssim \frac{2^{jn-4jm}}{\abs{2^{j+1}B}}\norm{f}_{L^2(2^{j+1}B)}^2
    \Big(\int_0^{2^jr}\frac{t^5}{(4^jr^2)^3}dt+\int_{2^jr}^{\infty}\frac{(4^jr^2)^{2m}}{t^{4m+1}}dt\Big)\\
    &\lesssim 2^{jn-4jm}M(\abs{f}^2)(y) \qquad\forall\, y\in B.
\end{align*}
$I_-$ can be estimated similarly. Choose $4m>n$, we get
$$
\frac{1}{\abs{B}}\int_B\abs{\mathcal{G}(I-e^{-r^2L})^mf}^2\lesssim\mathcal{M}\abs{f}^2(y)\quad\forall\, y\in B.
$$
Therefore, \eqref{G2t_Gt_Lp} holds for all $2\le p<p_0$. And since $p_0>2$ is arbitrary, \eqref{G2t_Gt_Lp} holds for all $2\le p<\infty$.\\
\textbf{Case 3: $1<p<2$.} Applying Lemma \ref{AThem2.1} and letting $1<p_0<2$, it suffices to show \eqref{2.1} and \eqref{2.2}, where $\mathcal{G}$ is defined in \eqref{Gdef G2t}. Note that \eqref{2.2} is independent of the operator $\mathcal{G}$ and is verified in the proof of Proposition \ref{Lp_G2xProp}. To see \eqref{2.1}, we proceed similarly as the proof of \eqref{G2t 2.1 p>2}. Using the $L^{p_0}-L^2$ off-diagonal estimate of $(e^{zL})_{z\in\Sigma_{\frac{\pi}{2}-\theta}}$, we obtain
\begin{align*}
    \frac{1}{\abs{2^{j+1}B}}&\int_{C_j(B)}\abs{\mathcal{G}(I-e^{-r^2L})^mf}^2\\
    &\lesssim\frac{1}{\abs{2^{j+1}B}}\int_0^\infty\Big\{\int_{\Gamma_+}\abs{z}^{-\frac{\gamma_{p_0}}{2}}e^{-\frac{c(2^jr)^2}{\abs{z}}}
    \frac{t^3r^{2m}}{(\abs{z}+t^2)^{\frac{5}{2}+m}}\norm{f}_{L^{p_0}(B)}\abs{dz}\Big\}^2\frac{dt}{t}
\end{align*}
plus an integral over $\Gamma_-$ with the same integrand.
Applying Lemma \ref{Alem5.5} with $\gamma=\gamma_{p_0}$ and $\alpha=\frac{3}{2}$, we have
\begin{align*}
   \frac{1}{\abs{2^{j+1}B}}&\int_{C_j(B)}\abs{\mathcal{G}(I-e^{-r^2L})^mf}^2\\
   &\lesssim 4^{-2jm}(2^jr)^{-2\gamma_{p_0}}\frac{\norm{f}_{L^{p_0}(B)}^2}{\abs{2^{j+1}B}}\int_0^\infty\inf\br{\br{\frac{t^2}{4^jr^2}}^3,\br{\frac{4^jr^2}{t^2}}^{2m}}\frac{dt}{t}\\
   &\lesssim 2^{-j(n+4m+2\gamma_{p_0})}r^{-2\gamma_{p_0}}\frac{\norm{f}^2_{L^{p_0}(B)}}{\abs{B}}\\
   &\lesssim 2^{-j(n+4m+2\gamma_{p_0})}\br{\frac{1}{\abs{B}}\int_B\abs{f}^{p_0}}^{\frac{2}{p_0}}.
\end{align*}
Choosing $m$ to be an integer such that $2m+\gamma_{p_0}>\frac{n}{2}$ gives \eqref{2.1}. And this shows that \eqref{G2t_Gt_Lp} holds for all $1<p<2$.

\underline{Proof of \eqref{commute}.}\par
We know that for any $g\in L^2$, $e^{-t^2L}g\in D(L)$, and $\Dt e^{-t^2L}g\in W^{1,2}=D(L^{1/2})$. The latter follows from analyticity of the semigroup
$$\partial_je^{-t^2L}g=\frac{1}{2\pi i}\int_{\Gamma}e^{\lambda t^2}\partial_j(\lambda I+L)^{-1}(g)d\lambda,$$
as taking the derivative in $t$ gives that
$$
\Dt\partial_je^{-t^2L}g=\frac{2t}{2\pi i}\int_{\Gamma}e^{\lambda t^2}\lambda\partial_j(\lambda I+L)^{-1}(g)d\lambda\in L^2.
$$
Since $e^{-r^2L}f\in D(L)\subset D(L^{1/2})$ and that $e^{-t^2L}$ is a bounded, linear operator on $L^2$, the lemma implies
\begin{equation}\label{commute1}
  t^2\Dt L^{1/2}e^{-t^2L}(e^{-r^2L}f)=t^2 \Dt e^{-t^2L}L^{1/2}(e^{-r^2L}f).
\end{equation}
So
\begin{align}
   t^2\Dt &L^{1/2}e^{-t^2L}(e^{-r^2L}f)=-2t^3Le^{-t^2L}(L^{1/2}e^{-r^2L}f)\nonumber\\
  &=-2t^3L^{1/2}L^{1/2}e^{-t^2L}(L^{1/2}e^{-r^2L}f)=-2t^3L^{1/2}Le^{-t^2L}e^{-r^2L}f\label{commute2}
\end{align}
where the last equality follows from Lemma \ref{lem Claim8} and $L^{1/2}L^{1/2}=L$.
\begin{align}
    -2t^3&L^{1/2}Le^{-t^2L}e^{-r^2L}f=-2t^3L^{1/2}Le^{-r^2L}e^{-t^2L}f\nonumber\\
    &=-2t^3L^{1/2}e^{-r^2L}Le^{-t^2L}f=t^2L^{1/2}e^{-r^2L}\Dt e^{-t^2L}f\nonumber\\
    &=-2t^3e^{-r^2L}L^{1/2}Le^{-t^2L}f\label{commute3},
\end{align}
where in the last step we have used $-2tLe^{-t^2L}f=\Dt e^{-t^2L}f\in W^{1,2}=D(L^{1/2})$ and Lemma \ref{lem Claim8}. Combining \eqref{commute1}-\eqref{commute3}, we have proved \eqref{commute}. 
\end{proof}

\appendix
\section{Appendix}
We include some frequently used results in this appendix for reader's convenience.
\begin{lem}[\cite{giaquinta1983multiple} Chapter V Proposition 1.1]\label{GiaProp1.1}
Let $Q$ be a cube in $\Rn$. Let $g\in L^q(Q)$, $q>1$, and $f\in L^s(Q)$, $s>q$, be two nonnegative functions. Suppose
\[
\fint_{Q_R(x_0)}g^qdx\le b\br{\fint_{Q_{2R}(x_0)}gdx}^q+\fint_{Q_{2R}(x_0)}f^qdx+\theta\fint_{Q_{2R}(x_0)}g^qdx
\]
for each $x_0\in Q$ and each $R<\min\set{\frac{1}{2}\dist (x_0,\bdy Q),R_0}$, where $R_0$, $b$, $\theta$ are constants with $b>1$, $R_0>0$, $0\le\theta<1$. Then $g\in L_{\loc}^p(Q)$ for $p\in[q,q+\epsilon)$ and
\[
\br{\fint_{Q_R}g^pdx}^{1/p}\le c\Big\{\br{\fint_{Q_{2R}}g^qdx}^{1/q}+\br{\fint_{Q_{2R}}f^pdx}^{1/p}\Big\}
\]
for $Q_{2R}\subset Q$, $R<R_0$, where $c$ and $\epsilon$ are positive constants depending only on $b$, $\theta$, $q$, $n$ (and $s$).

\end{lem}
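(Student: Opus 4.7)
The plan is to follow Gehring's self-improvement strategy in the form adapted by Giaquinta--Modica to handle the ``bad'' self-term $\theta\fint g^q$ on the right-hand side. I will first derive a good-$\lambda$ inequality for the distribution function of $g$ via a Calder\'on--Zygmund stopping-time decomposition, and then multiply by $t^{\varepsilon - 1}$ and integrate in $t$ to upgrade the integrability by a small exponent $\varepsilon > 0$. After a rescaling and translation I may reduce to $x_0 = 0$ and $R_0 = 1$, and fix a reference cube $Q_1$.

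\medskip

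\noindent\emph{Good-$\lambda$ inequality.} Set $\Lambda_0 := \big(\fint_{Q_1} g^q\big)^{1/q} + \big(\fint_{Q_1} f^q\big)^{1/q}$. For each $t > \Lambda_0$, I will perform a Calder\'on--Zygmund decomposition of $g^q \mathbf{1}_{Q_{1/2}}$ at height $t^q$, producing disjoint dyadic cubes $\{Q_i\} \subset Q_{1/2}$ with $t^q < \fint_{Q_i} g^q \le 2^n t^q$ and $g \le t$ a.e.\ off $\cup_i Q_i$; their parents satisfy $\fint_{\tilde Q_i} g^q \le t^q$. Applying the hypothesis on each $Q_i$ with doubled concentric cube $\hat Q_i := 2Q_i$, the stopping information bounds the $\theta$-term by $C \theta t^q$, while the splitting $g = g\mathbf{1}_{\{g \le \eta t\}} + g \mathbf{1}_{\{g > \eta t\}}$ combined with H\"older yields
\[
b\Big(\fint_{\hat Q_i} g\Big)^q \le b\eta^q t^q + \frac{C_b}{|\hat Q_i|}\int_{\hat Q_i \cap \{g > \eta t\}} g^q,
\]
and likewise for $f$. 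Choosing $\eta$ so small that $b\eta^q + C\theta < \tfrac12$ allows absorption of the $t^q$ pieces into the left-hand side $t^q < \fint_{Q_i} g^q$. Summing over $i$ and using the bounded overlap of the $\hat Q_i$ then produces the good-$\lambda$ inequality
\[
\int_{Q_{1/2} \cap \{g > t\}} g^q \le C_1 \int_{Q_1 \cap \{g > \eta t\}} g^q + C_2 \int_{Q_1 \cap \{f > \eta t\}} f^q, \qquad t > \Lambda_0.
\]

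\medskip

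\noindent\emph{Distribution integration and main obstacle.} Multiplying the good-$\lambda$ inequality by $t^{\varepsilon - 1}$ and integrating over $t \in (\Lambda_0, \infty)$, Cavalieri's principle converts each level-set $L^q$ integral into an $L^{q + \varepsilon}$ integral:
\[
\int_{Q_{1/2}} g^{q+\varepsilon} \le \frac{C_1 \eta^{-\varepsilon}}{\varepsilon}\int_{Q_1} g^{q+\varepsilon} + \frac{C_2 \eta^{-\varepsilon}}{\varepsilon}\int_{Q_1} f^{q+\varepsilon} + C \Lambda_0^\varepsilon \int_{Q_1} g^q.
\]
The main difficulty is that the $g^{q+\varepsilon}$ term on the right sits on the \emph{larger} cube $Q_1$, so it cannot be absorbed locally. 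I will instead apply the above inequality to each cube of a chain of concentric radii $R_k = R(1 + 2^{-k})$ and invoke a standard iteration lemma for nondecreasing functions (in the spirit of \cite{giaquinta1983multiple}, Chapter V) to sum the resulting geometric series. This forces $\varepsilon = \varepsilon(b,\theta,q,n)$ to be so small that $C_1 \eta^{-\varepsilon}/\varepsilon$ is beaten by the geometric factor; the delicate bookkeeping of this absorption is the technical heart of the argument. Rescaling back to radius $R$, taking $(q+\varepsilon)$-th roots, and interpolating with $L^q$ then yields the stated reverse H\"older inequality for all $p \in [q, q+\varepsilon)$.
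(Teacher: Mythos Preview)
The paper does not prove this lemma; it is recorded in the appendix with a citation to Giaquinta's monograph and no argument, so there is no in-paper proof to compare against. Your outline follows the correct Gehring--Giaquinta strategy, but the good-$\lambda$ inequality you write down is not the one that actually produces higher integrability.

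The problem is the Jensen/H\"older step. From the stopping cubes and the hypothesis you correctly reach $t\lesssim \fint_{\hat Q_i} g\,\1_{\{g>\eta t\}}+\ldots$; at that point you pass to $\fint_{\hat Q_i} g^q\,\1_{\{g>\eta t\}}$ and obtain
\[
\int_{Q_{1/2}\cap\{g>t\}} g^q \;\le\; C_1 \int_{Q_1\cap\{g>\eta t\}} g^q \;+\; \text{($f$-term)}.
\]
Here $C_1$ is a fixed structural constant (of order $2^q b$ times the overlap constant), in particular $C_1>1$. When you multiply by $t^{\varepsilon-1}$ and integrate, the $1/\varepsilon$ factors on the two sides cancel, and the coefficient in front of $\int_{Q_1} g^{q+\varepsilon}$ is $C_1\eta^{-\varepsilon}\ge C_1>1$ for every $\varepsilon>0$ (your displayed $C_1\eta^{-\varepsilon}/\varepsilon$ is in fact a miscomputation of this). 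No concentric-radii iteration lemma can absorb a right-hand side with coefficient uniformly larger than $1$, so the closing step you describe cannot succeed.

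The fix is to \emph{not} apply Jensen: keep the first power on the right and sum to get
\[
\int_{\{g>t\}} g^q \;\lesssim\; t^{\,q-1}\!\int_{\{g>\eta t\}} g \;+\; \text{($f$-term)}.
\]
Now integrating against $t^{\varepsilon-1}\,dt$ yields $\tfrac{1}{\varepsilon}\int g^{q+\varepsilon}$ on the left but only $\tfrac{C}{q-1+\varepsilon}\int g^{q+\varepsilon}$ on the right, and the ratio $C\varepsilon/(q-1+\varepsilon)$ can be made smaller than $1$ by choosing $\varepsilon$ small; absorption (plus one inner--outer cube iteration to pass from $Q_{1/2}$ to $Q_1$) then closes the argument. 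This is precisely the mechanism in Giaquinta's proof.
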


\begin{lem}\label{Evanslem}
Suppose $u,v\in L^2\br{(0,T),W^{1,2}(\Rn)}$ with $\Dt u, \Dt v \in L^2\br{(0,T),\wt{W}^{-1,2}(\Rn)}$. Then
\begin{enumerate}[(i)]
    \item $u\in C\br{[0,T], L^2(\Rn)}$;
    \item The mapping $t\mapsto\norm{u(\cdot,t)}_{L^2(\Rn)}$ is absolutely continuous, with
    \[\frac{d}{dt}\norm{u(\cdot,t)}_{L^2(\Rn)}^2=2\Re\act{\Dt u(\cdot,t),u(\cdot,t)} \quad\text{for a.e. }t\in[0,T].\]
    As a consequence,
    \[
    \frac{d}{dt}\br{u(\cdot,t),v(\cdot,t)}_{L^2(\Rn)}=\act{\Dt u(\cdot,t),v(\cdot,t)}+\overline{\langle{\Dt v(\cdot,t),u(\cdot,t)\rangle}}_{\wt{W}^{-1,2},W^{1,2}}\quad\text{a.e.}.
    \]
\end{enumerate}
\end{lem}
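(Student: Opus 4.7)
The plan is to follow the classical argument (cf. Evans, \emph{Partial Differential Equations}, \S5.9.2) based on time-mollification. First I would extend $u$ and $v$ to a slightly larger interval by reflection at the endpoints, then convolve in $t$ with a standard mollifier $\eta_\eps$ to obtain smooth approximants $u^\eps \in C^\infty\br{(0,T); W^{1,2}(\Rn)}$ with $\Dt u^\eps = (\Dt u)^\eps$. By the standard theory of Bochner integrals, $u^\eps \to u$ in $L^2\br{(0,T); W^{1,2}(\Rn)}$ and $\Dt u^\eps \to \Dt u$ in $L^2\br{(0,T); \wt{W}^{-1,2}(\Rn)}$, and pointwise a.e.\ in $t$ after passing to a subsequence.

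For each fixed $\eps$, because $u^\eps$ is smooth in $t$ with values in $W^{1,2}\subset L^2$, the identity
\[
\norm{u^\eps(\cdot,t)}_{L^2}^2 - \norm{u^\eps(\cdot,s)}_{L^2}^2 = 2\Re\int_s^t\act{\Dt u^\eps(\cdot,\tau),u^\eps(\cdot,\tau)}d\tau
\]
holds classically for every $0<s<t<T$. To pass to the limit $\eps\to 0^+$, I would bound the right-hand side by Cauchy--Schwarz in the duality pairing, using
\[
\abs{\act{\Dt u^\eps,u^\eps}}\le \norm{\Dt u^\eps}_{\wt W^{-1,2}}\norm{u^\eps}_{W^{1,2}},
\]
and observe that the product $\norm{\Dt u}_{\wt W^{-1,2}}\norm{u}_{W^{1,2}}$ lies in $L^1(0,T)$. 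Thus the integral converges to $2\Re\int_s^t\act{\Dt u,u}d\tau$. Fixing $s$ and $t$ in a set of full measure on which $u^\eps(\cdot,t)\to u(\cdot,t)$ in $L^2$, the left-hand side converges as well, giving the integrated identity. Since the right-hand side is absolutely continuous in $t$, the function $t\mapsto \norm{u(\cdot,t)}_{L^2}^2$ admits an absolutely continuous representative satisfying the differentiation formula of (ii); item (i) follows because an $L^2$-valued map with absolutely continuous norm and coinciding weak limits at nearby times is strongly continuous into $L^2(\Rn)$.

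The mixed identity for $(u,v)_{L^2}$ follows by polarization. Applying (ii) separately to $u+v$ and to $u+iv$ (both of which satisfy the hypotheses) and expanding
\[
\norm{u+v}_{L^2}^2 = \norm{u}_{L^2}^2+\norm{v}_{L^2}^2+2\Re(u,v),\qquad \norm{u+iv}_{L^2}^2 = \norm{u}_{L^2}^2+\norm{v}_{L^2}^2+2\Im(u,v),
\]
I would compute $\frac{d}{dt}\Re(u,v)$ and $\frac{d}{dt}\Im(u,v)$ by subtraction. Using the sesquilinearity $\act{\Dt u,iv}=-i\act{\Dt u,v}$ and $\act{i\Dt v,u}=i\act{\Dt v,u}$, the real and imaginary parts combine into
\[
\tfrac{d}{dt}(u,v)_{L^2}=\act{\Dt u,v}+\overline{\act{\Dt v,u}}\quad\text{a.e.}
\]

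The main technical obstacle is justifying the mollification argument in the sesquilinear anti-dual pairing $\act{\cdot,\cdot}_{\wt W^{-1,2},W^{1,2}}$ rather than the real bilinear one, and, in the passage to the limit, arranging simultaneously that $u^\eps(\cdot,t)\to u(\cdot,t)$ in $L^2(\Rn)$ pointwise for $t$ in a set of full measure and that the duality pairing passes to the limit in $L^1(0,T)$. Once these are in hand, everything else is routine.
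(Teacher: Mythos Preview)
Your proposal is correct and follows exactly the mollification argument of Evans, \S5.9.2, Theorem~3, which is precisely what the paper cites; the paper does not supply its own proof of this lemma but simply refers to Evans. The sesquilinear bookkeeping in your polarization step is handled correctly.
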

For its proof see e.g. \cite{evans1998partial} Section 5.9.2 Theorem 3.

\begin{lem}[\cite{kato1976perturbation} Chapter V, Theorem 3.35]\label{lem Claim8}
 For any bounded linear operator $B$ on $L^2$, if $BL=LB$ in $D(L)$, then $L^{1/2}B=BL^{1/2}$ in $D(L^{1/2})$.
 \end{lem}

\bibliographystyle{plain}
\bibliography{references}

\Addresses

\end{document}